\documentclass{aims} 
\usepackage{amsmath,comment}
\usepackage{paralist}
\usepackage[misc]{ifsym}
\usepackage{epsfig} 
\usepackage{epstopdf} 
\usepackage[colorlinks=true]{hyperref}
\hypersetup{urlcolor=blue, citecolor=red}
\allowdisplaybreaks

\usepackage{tikz}

\def\currentvolume{X}
 \def\currentissue{X}
  \def\currentyear{200X}
   \def\currentmonth{XX}
    \def\ppages{X-XX}
     \def\DOI{10.3934/xx.xxxxxxx}

 \usepackage{xcolor}

\renewcommand{\epsilon}{\varepsilon}

\newcommand{\R}{\mathbb R}

\newcommand\dps{\displaystyle}
\newcommand\be{\begin{equation}}
\newcommand\ee{\end{equation}}
\newcommand\bea{\begin{eqnarray}}
\newcommand\eea{\end{eqnarray}}
\newcommand\beaa{\begin{eqnarray*}}
\newcommand\eeaa{\end{eqnarray*}}
\newcommand\bay{\begin{array}}
\newcommand\eay{\end{array}}
\newcommand\ba{\begin{align}}
\newcommand\ea{\end{align}}

\newcommand{\red}[1]{{#1}}
\newcommand{\blue}[1]{{#1}}

\newtheorem{theorem}{Theorem}[section]
\newtheorem{corollary}[theorem]{Corollary}

\newtheorem{lemma}[theorem]{Lemma}

\newtheorem{problem}{Open Question}
\newtheorem*{question}{Question}

\theoremstyle{definition}
\newtheorem{definition}[theorem]{Definition}
\newtheorem{remark}[theorem]{Remark}

\newcommand{\ep}{\varepsilon}

\title[Invasion Fronts via Hamilton-Jacobi Approach]
{Invasion Fronts in Shifting Habitats and Competition Systems: A Hamilton-Jacobi Approach and Nonlocal Effects} 

\author[K.-Y. Lam,  C.-H. Wu and X. Yu]{}
\subjclass{Primary: 35K57, 92D25, 35F21; Secondary: 35B40, 35Q92.}
\keywords{Reaction–diffusion equations, invasion fronts, Hamilton–Jacobi equations, spreading speed, ecological invasion, shifting habitats}

\thanks{The first author is partially supported by National Science Foundation (Grant DMS-2325195,\,DMS-2602291).
CHW is supported by the NSTC of Taiwan (NSTC 113-2628-M-A49-004-MY4) and the National Center for Theoretical Sciences.
XY is supported in part by the NSFC (Nos. 12371169 and 12571177).}

\thanks{$^*$Corresponding author: King-Yeung Lam (\href{mailto:lam.184@osu.edu}{lam.184@osu.edu})
}
\thanks{
The authors are grateful to Professor Fran\c{c}ois Hamel and Professor Lionel Roques for inviting them to contribute this survey article to the thematic session ``Reaction--Diffusion Equations: New Theoretical and Applied Trends'' at the 15th AIMS Conference, held in Athens, Greece.
}

\begin{document}
\maketitle
\centerline{\scshape
King-Yeung Lam$^{{\href{mailto:lam.184@osu,edu}{\textrm{\Letter}}}*1}$, 
Chang-Hong Wu$^{{\href{mailto:changhong@math.nctu.edu.tw}{\textrm{\Letter}}}2}$
and Xiao Yu$^{{\href{mailto:xymath19@m.scnu.edu.cn}{\textrm{\Letter}}}3}$}

\medskip

{\footnotesize
 \centerline{$^1$Department of Mathematics, The Ohio State University}
} 

\medskip

{\footnotesize
 \centerline{$^2$Department of Applied Mathematics, National Yang Ming Chiao Tung University}
}

\medskip

{\footnotesize
 \centerline{$^3$School of Mathematical Sciences, South China Normal University}
}

\bigskip

 \centerline{(Communicated by Handling Editor)}




\begin{abstract}
This survey reviews recent developments in the study of spreading phenomena in
reaction--diffusion equations arising from ecological invasion models.
Motivated by the conjecture of Shigesada and Kawasaki on staged invasions,
we discuss how competition systems can lead to effective scalar models
with shifting habitats. We present the Hamilton--Jacobi approach for
determining spreading speeds and revisit the result of 
\red{Li--Bewick--Shang--Fagan} (2014)
from this perspective. We then describe the emergence of nonlocally pulled
fronts when the shifting habitat connects regions of distinct positive growth rates.
Recent results including the works of Lam--Yu (2022) and Lam--Nadin--Yu (2025)
are surveyed. We also discuss spreading phenomena in competition systems, \red{predator-prey systems}, and the existence of various classes of entire solutions. Finally, we discuss the logarithmic 
correction for invasion waves in moving environments and prove a new result.
\end{abstract}

\tableofcontents

\section{Ecological Motivation and the Shigesada--Kawasaki Conjecture}  
The spatial spread of biological species into new environments is a central problem in ecology. A well-known example is the post-glacial northward migration of tree species across North America \cite{davis1981quaternary}, where different species expanded their ranges over time into previously uninhabitable regions.

Such phenomena can be modeled mathematically as the propagation of invasion fronts into unstable states \cite{avery2025pulledfrontsjustpulled,van2003front}. In this context, an unstable state typically represents the absence of a species, while the propagating front describes the expansion of a population into an initially empty but favorable habitat.

In case of a single species, a classical model for this process is the nonlinear diffusion equation
\begin{equation}\label{e.pde1}
u_t = u_{xx} + f(u) \quad \text{ for }t>0,~x \in \mathbb{R},
\end{equation}
introduced in the seminal works of Fisher \cite{fisher1937wave} and of Kolmogorov, Petrovsky, and Piscounoff \cite{KolmogorovPetrovskyPiskunov1937}. Such a class of models, originally motivated by population genetics and ecology, laid the foundation for the mathematical theory of front propagation. 

A central question in this theory is to determine how fast an invading population spreads. This leads to the notion of spreading speed formalized in \cite{aronson1978multidimensional,weinberger1982long}, which characterizes the asymptotic rate at which the invasion front advances.



\begin{definition}[Spreading speed] \label{spread}
Let $u: (0,\infty)\times\mathbb R \to [0,\infty)$  be the density function of a given population, typically arising from front-like or compactly supported initial data.  

A number $c^*>0$ is called the (rightward) \emph{spreading speed} associated with a class of initial data,  if for each solution $u(t,x)$ with such initial data,  the following two properties hold:
\[
\begin{cases}
\lim\limits_{t\to\infty}\sup\limits_{x\ge ct} u(t,x)=0 \qquad &\text{ for every }c > c^*;\\
\liminf\limits\limits_{t\to\infty}\inf\limits_{0\le x\le ct} u(t,x)>0 &\text{ for every }
0<c<c^*.
\end{cases}
\]
\end{definition}



\blue{
A separate approach to model range expansion involves models with evolving boundaries, e.g. of stefan-type.
    Since we will focus mainly on the propagation of level sets in Cauchy problems, we refer the readers to the survey \cite{Du2022Propagation} for a recent account, and to \cite{MaDuWang2026} for the connection between  Cauchy problems and a new class of problems involving a ``preferred population density" on the moving boundary. 
}

\medskip

\noindent {\bf From Single-Species to Multi-Species Invasion.}  
While the spreading behavior of a single species for \eqref{e.pde1} is now well understood, considerably less is known about the interaction of multiple invading species.

Motivated by ecological observations, Shigesada and Kawasaki proposed the study of co-invasion in open habitats \cite{shigesada1997biological}. Here, an open habitat refers to an environment that allows reproductive growth once invaded. 
To model this phenomenon, they considered the classical Lotka–Volterra competition–diffusion system
\begin{equation}\label{e.lv}
\begin{cases}
u_t = u_{xx} + u(1-u-av), \\
v_t = d v_{xx} + r v(1-v-bu),
\end{cases}
\end{equation}
where $u$ and $v$ represent the population densities of two competing species, and $d,r,a,b$ are positive constants. Here, one is interested in studying the co-invasion phenomenon of several competing species where the initial data of every component is compactly supported (or more generally, front-like). This differs from classical studies (see, e.g., \cite{lewis2002spreading}) on \eqref{e.lv}, which typically consider initial data that are
compact perturbations of the semi-trivial equilibria $E_u=(1,0)$ or $E_v = (0,1)$.

Their numerical investigations suggested a striking phenomenon known as staged invasion, in which: 
\begin{itemize}
\item a fast disperser invades the empty territory,
\item a slower competitor subsequently follows,
\item the second invasion speed may depend on the first.
\end{itemize}

Understanding how these invasion speeds interact remains a fundamental open problem and serves as the main motivation for this survey.


\medskip

\noindent {\bf Reduction to a Single-Species Model with Shifting Habitat.} 
To make the key issue more transparent, we first discuss a reduced problem. When the faster species spreads into the empty territory with speed $c_1>0$, the environment
experienced by the slower species can be approximated by a spatially shifting
growth rate. Indeed, if we formally replace $v = \mathbf{1}_{\{x\le c_1 t\}}$ 
in the first equation of 
\eqref{e.lv}, then one obtains a PDE incorporating a shifting variable $x-c_1t$:
\begin{equation}\label{e.0312.1}
u_t = u_{xx} + f(x-c_1 t,u),
\end{equation}
which  
reduces to the case when the growth rate changes across the moving interface $x=c_1 t$:
\begin{equation}\label{e.0325.2}
f(x-c_1t,u)/u = 1 - a\,\mathbf{1}_{\{x-c_1 t\le 0\}} - u.
\end{equation}
This is a special case belonging to a class of models with {\it shifting habitat}.

Next, we discuss recent progress to the following conceptual question:
\begin{question}
    How do the environmental shifting speed $c_1$ and the initial data jointly determine the persistence or extinction and the spreading speed of the population?
\end{question}


\section{Reaction--Diffusion Equations in Shifting Habitats}\label{s.2}

The type of spatio-temporal heterogeneity incorporating the variable $x-c_1t$ in \eqref{e.0312.1} was introduced in a separate context in the work of
Potapov and Lewis \cite{potapov2004climate} and Berestycki et al. \cite{berestycki2009can}, where they studied the effect of the shifting isotherms on biota.

Since then, models of propagation in shifting habitats has been developed to treat various modeling contexts, including discrete diffusion, nonlocal diffusion, and others; 
See \cite{wang2022recent} for a recent account of the literature. Here we aim to present some tools and apply them to the simplest reaction-diffusion models, to make the ideas transparent. However, we expect these tools to be useful in other modeling contexts as well. 

By assuming that the size of a bounded moving source patch
for a focal species is fixed and is  surrounded by sink patches, 
the critical patch size for species persistence was analyzed  
\cite{berestycki2009can,lam2024population,potapov2004climate}. In \cite{li2014persistence}, Li et al. proposed to study \eqref{e.0312.1} with a Fisher-KPP nonlinearity:
\begin{equation}\label{e.0312.2}
f(x-c_1 t,u) = (r(x- c_1 t) - u)u
\end{equation}
and studied the situation in which the favorable environment is unbounded and retreating in the sense
that $c_1>0$ and 
\begin{equation}\label{e.0312.3}
r \in C(\mathbb{R})\text{ is increasing and satisfies }r(-\infty) < 0 < r(+\infty).
\end{equation}
\begin{theorem}\label{t.0312.1}
{\rm (\cite[Theorems 2.1 and 2.2]{li2014persistence})}
Let $u$ be a solution of \eqref{e.0312.1} with bounded, nonnegative, and nontrivial compactly supported initial data. 
Suppose \eqref{e.0312.2}--\eqref{e.0312.3} hold. 
\begin{itemize}
    \item[{\rm(a)}] If $c_1 > 2\sqrt{r(+\infty)}$, then 
    \[
    \sup_{x\in \mathbb{R}} u  \to 0 \quad \text{ as } t\to+\infty.
    \]
    \item[{\rm(b)}] If $0<c_1 < 2\sqrt{r(+\infty)}$, then for each  $\eta\in\left(0, \tfrac{2\sqrt{r(+\infty)} - c_1}{2}\right)$,
\[
\begin{aligned}
\lim_{t\to\infty}\red{\sup\limits_{x\in\R}}\Big[&|u|\,\mathbf{1}_{\{x\le t(c_1-\eta)\}}
      + |u-r(+\infty)|\,\mathbf{1}_{\{t(c_1+\eta)\le x\le t(2\sqrt{r(+\infty)}-\eta)\}} \\
   & \qquad + |u|\,\mathbf{1}_{\{x\ge t(2\sqrt{r(+\infty)}+\eta)\}}
\Big] = 0 .
\end{aligned}
\]
\end{itemize}
\end{theorem}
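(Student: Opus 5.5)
Write $r_\infty := r(+\infty)$. The plan is to compare $u$ with explicit sub- and supersolutions, using throughout that $r \le r_\infty$ everywhere, and that $r(\xi)\le r(-\infty)/2<0$ for $\xi\le -L$ when $L$ is large.

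\emph{Part (a).} Since $r\le r_\infty$, $u$ is a subsolution of the linear equation $w_t=w_{xx}+r(x-c_1t)w$. For the upper bound I would use a moving exponential $\bar u(t,x)=Me^{-\lambda(x-c_1t)}$. In the moving frame $\xi=x-c_1t$ it is a supersolution wherever
\[
\lambda^2-\lambda c_1+r(\xi)\le 0 .
\]
When $c_1>2\sqrt{r_\infty}$ I choose $\lambda$ strictly between the two roots of $\lambda^2-c_1\lambda+r_\infty=0$. This makes $\bar u$ a strict supersolution with decay rate
\[
\delta=-(\lambda^2-c_1\lambda+r_\infty)>0 .
\]
So $Me^{-\delta t}e^{-\lambda(x-c_1t)}$ controls $u$ for $\xi\ge -L$.

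On the region $\xi<-L$ I need a separate supersolution. The plan is to take the minimum (or a smooth gluing) of the exponential with a function that decays like $e^{-\mu t}$, for some $\mu>0$ coming from $r<0$ there. A cleaner alternative would be a principal-eigenfunction-type supersolution in the moving frame, i.e.\ the positive solution of
\[
\phi''+c_1\phi'+(r(\xi)+\delta)\phi=0 ,
\]
which behaves like $e^{-\lambda\xi}$ at $+\infty$ and decays at $-\infty$. I would first try the convenient form $\bar u=Me^{-\delta t}\psi(\xi)$ with
\[
\psi(\xi)=\min\{e^{-\lambda\xi},\,K e^{\nu\xi}\},
\]
where $\nu>0$ is small. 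This is valid because a minimum of supersolutions is a supersolution. On the left branch the requirement reduces to $\nu^2+c_1\nu+r(\xi)+\delta\le0$, which holds for $\xi<-L$ when $\nu$ and $\delta$ are small. I must check that the minimum switches branches in the region $\xi<-L$; I arrange this by taking $K$ large. Together these give $\sup_x u\to0$.

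\emph{Part (b), upper bounds.} The bound $u\to 0$ for $x\ge (2\sqrt{r_\infty}+\eta)t$ follows from comparison with the KPP problem with constant rate $r_\infty$. For $u\to0$ on $x\le (c_1-\eta)t$, I use the same $\min$-construction as above, but with a left-branch exponential $Ke^{\nu(x-(c_1-\eta/2)t)}$ multiplied by a time decay. On this region $r<0$ eventually, and the left branch dominates.

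\emph{Part (b), lower bound.} I would construct a compactly supported subsolution moving at speed $c\in(c_1,2\sqrt{r_\infty})$. Fix $R$ large so that $r\ge r_\infty-\epsilon$ on $\xi\ge R$, and choose $\epsilon$ with $c<2\sqrt{r_\infty-\epsilon}$. The subsolution is the standard
\[
\underline u=\delta e^{-c(x-x_0-ct)/2}\cos\!\big(\pi(x-x_0-ct)/(2\ell)\big)
\]
on an interval of half-length $\ell$, and it satisfies $\underline u_t\le \underline u_{xx}+(r_\infty-\epsilon-\underline u)\underline u$ for small $\delta$. Because $c>c_1$, its support eventually lies in $\{\xi\ge R\}$ for all later times, provided $x_0$ is placed suitably at a large time $t_0$. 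First, $u(t_0,\cdot)>0$ everywhere by the strong maximum principle, so $u(t_0,\cdot)\ge \underline u(t_0,\cdot)$. Comparison then gives $u\ge\underline u$, so $u$ is bounded below along rays of speed $c\in(c_1+\eta,\,2\sqrt{r_\infty}-\eta)$. This uniform positivity extends to the whole strip $\{c_1+\eta\le x/t\le 2\sqrt{r_\infty}-\eta\}$ by using a family of such subsolutions.

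\emph{Part (b), convergence to $r_\infty$.} To upgrade positivity to $u\to r_\infty$, I would argue by contradiction with a compactness/Liouville step. Take a sequence $(t_n,x_n)$ in the strip and shift. The limit $u_\infty$ is an entire solution on $\mathbb R^2$ of
\[
u_t=u_{xx}+(r_\infty-u)u ,
\]
since $x_n-c_1t_n\to+\infty$. It satisfies $\inf u_\infty>0$ on a neighborhood that grows linearly in the shifted coordinates. Because the positivity holds on the whole strip for all earlier times, $u_\infty\ge m>0$ everywhere. The only such entire solution is $u_\infty\equiv r_\infty$, which follows by comparison with the ODE $\dot y=(r_\infty-y)y$ from $y(-T)=m$ and from $\sup$ data, letting $T\to\infty$.

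The main obstacle is the region $x\le(c_1-\eta)t$ in part (b), and the analogous left region in (a). There the supersolution must be glued across the interface, where $r$ changes sign only asymptotically, and it must decay in time even though it borders the favorable zone where $u\approx r_\infty$. I would try the construction $\min\{r_\infty,\,Ke^{\nu(x-(c_1-\eta/2)t)}\}$ first. The key check is the exponential branch: its exponent $\nu(\xi+\eta t/2)\to-\infty$ on the left region, and on the support where $\xi<-L$ it requires $\nu^2+\nu(c_1-\eta/2)+r(\xi)\le0$. This holds for small $\nu$.
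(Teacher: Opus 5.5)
Your proposal is correct. For the middle strip and the far-right region it coincides with the paper's argument: a compactly supported bump moving at speed $c\in(c_1,2\sqrt{r(+\infty)})$, followed by a Liouville argument for $u_t=u_{xx}+(r(+\infty)-u)u$. There you should state explicitly, as the paper does, that $\ell$, $x_0$, $t_0$ and the amplitude can be chosen independently of $c$ on a compact subinterval. You should also note that the Liouville step uses positivity on the slightly wider strip with $\eta/2$ in place of $\eta$.

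The genuine difference is the region behind the moving edge. The paper argues softly. If $\sup_{x\le(c_1-\eta)t}u$ did not tend to $0$, parabolic compactness along points with $x_n-c_1t_n\to-\infty$ would yield a bounded entire solution of $\bar v_t=\bar v_{xx}+r(-\infty)\bar v-\bar v^2$ with $\bar v(0,0)>0$. This is impossible since $r(-\infty)<0$. Part (a) then follows by covering $\mathbb{R}$ with $\{x\le(c_1-\eta)t\}\cup\{x\ge(2\sqrt{r(+\infty)}+\eta)t\}$ for $\eta$ small.

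You instead glue explicit supersolutions: $Me^{-\delta t}\min\{e^{-\lambda\xi},Ke^{\nu\xi}\}$ for (a), and $\min\{M_0,Ke^{\nu(x-(c_1-\eta/2)t)}\}$ for (b). The decisive check you identified is right. For $K$ large the switching curve lies in $\{\xi<-L\}$ for all $t\ge0$ (in (b) it even drifts left at speed $\eta/2$), so each branch is used only where it is a supersolution. One small fix: take $M_0=\max\{r(+\infty),\sup u_0\}$ rather than $r(+\infty)$, since $u_0$ is only assumed bounded.

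Your route gives quantitative exponential decay: $\sup_x u=O(e^{-\delta t})$ in (a), and $u=O(e^{-\nu\eta t/2})$ on $\{x\le(c_1-\eta)t\}$ in (b). It also avoids compactness altogether. The paper's route needs no construction or gluing bookkeeping. It only uses that bounded entire solutions of the limiting equation at $\xi=-\infty$ vanish, which is the same Liouville mechanism it reuses to obtain $u\to r(+\infty)$ in the strip. That makes it transfer more readily to nonlinearities without a clean linear bound of the form $f(\xi,u)\le r(\xi)u$ with $r$ negative on a half-line.
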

\begin{proof}[First proof of Theorem \ref{t.0312.1} (via super/sub-solutions method).]
Here, we sketch an elementary proof using the method of super- and subsolutions. 

First, let $r(+\infty)>0$ and $c_1>0$ be given constants, and 
set $\lambda_1=\sqrt{r(+\infty)}$, then
 the supersolution $\overline{u}_1(t,x):= C_1e^{-\lambda_1(x - 2\lambda_1 t)}$ (for some $C_1> 1$)  
implies that 
\begin{equation}\label{e.0312.4a}
     \lim_{t\to\infty} \sup_{x \geq t(2\sqrt{r(+\infty)}+\eta)} |u| =0 \qquad \text{for every }\eta >0.
 \end{equation}
Next, we claim that 
 \begin{equation}\label{e.0312.4b}
     \lim_{t\to\infty} \sup_{x \leq t(c_1-\eta)} |u| =0 \qquad \text{for every }\eta >0.
 \end{equation}
 Suppose not. Then, thanks to parabolic regularity, we can pass to a sequence $(t_n,x_n)$ such that $t_n \to \infty$, $v_n(t,x):= u(t + t_n, x + x_n)$ converges in $C_{loc}(\mathbb{R}^2)$ to some $\bar v(t,x)$ such that  $\bar{v}(0,0)>0$ and satisfies
 \begin{equation}\label{e.0312.4bb}
 \begin{cases}
\bar v_t = \bar v_{xx} + r(-\infty) \bar v - \bar v^2 \quad &\text{ for }(t,x) \in \mathbb{R}^2,\\
0 \leq \bar v \leq {r(+\infty)} &\text{ for }(t,x) \in \mathbb{R}^2.
\end{cases}
 \end{equation}
However, \eqref{e.0312.4bb} and $r(-\infty) <0$ imply that $\bar v \equiv 0$, which is incompatible with $\bar{v}(0,0) >0$. This contradiction establishes \eqref{e.0312.4b}.

Now, assertion (a) is a consequence of \eqref{e.0312.4a} and \eqref{e.0312.4b}. To prove assertion (b), \red{for each $\eta>0$, fix any $c \in [c_1+\eta, 2\sqrt{r(+\infty)}-\eta]$}, and consider (see \cite{lam2022introduction})
\[
\underline{u}_1(t,x)= \begin{cases}
    \epsilon e^{- \frac{c}{2}(x-ct)}\sin \left(\frac{x-ct}{R} \right) &\text{ when }0 < \frac{x-ct}{R} <\pi,\\
    0 &\text{ otherwise}.
\end{cases}
\]
For any $R>0$ such that $r(+\infty) > \frac{c^2}{4} + \frac{3}{R^2}$, there exists $T_R>0$ such that for any $\epsilon \in (0, \frac{1}{R^2})$, one can show \cite[Definition 1.1.1 and Lemma 6.3.7]{lam2022introduction} that $\underline{u}_1$ is a  generalized subsolution of  \eqref{e.0312.1} in the domain $(T_R, \infty) \times \mathbb{R}$. By comparing $u$ with $\underline{u}_1$ (with \red{suitably chosen $\epsilon$, $R$, $T_R$ independent of $c$}), we deduce that 
\begin{equation}\label{e.persist0723}
\liminf_{t\to\infty} \inf_{t(c_1 + \eta) \leq x \leq t(2\sqrt{r(+\infty)} - \eta)} u >0 \, \text{ for each }0 < \eta < \tfrac{2\sqrt{r(+\infty)} - c_1}{2}.
\end{equation}
Arguing as before using classification of entire solutions (which are bounded from below by a positive constant) to the constant coefficient problem \eqref{e.0312.4bb} with $r(+\infty)$ replacing $r(-\infty)$, one can show
\begin{equation}\label{e.0312.4c}
   \lim_{t\to\infty} \sup_{t(c_1 + \eta) \leq x \leq t(2\sqrt{r(+\infty)} - \eta)} |u - r(+\infty) |=0  \, \text{ for each }0 < \eta < \tfrac{2\sqrt{r(+\infty)} - c_1}{2}. 
\end{equation}
This completes the proof.
\end{proof}


\medskip

\noindent {\bf Forced traveling waves in shifting environments.} 
Since there is an intrinsic speed $c_1$ in the environment, one question is the existence of the so-called {\it forced waves} which are the solutions that maintain the same profile but shift at a speed exactly equal to the speed $c_1$.  

\begin{definition}[Forced Traveling Waves]
    We say that $U: \mathbb{R} \to \mathbb{R}$ is a {\it forced traveling wave} of \eqref{e.0312.1} if $\lim\limits_{\xi \to \pm \infty} U(\xi)$ exist, and 
\[
-c_1 U'(\xi) =     U''(\xi) +  f(\xi, U(\xi)) \quad \text{ for } \, \xi \in \mathbb R.
\]
\end{definition}

\blue{This question was first studied by Hamel \cite{hamel1997reaction1,hamel1997reaction} in cylinder domains when the environment is non-increasing for Fisher-KPP, ignition and bistable type problems. In one
dimensional space, the existence, multiplicity of forced waves when $z\mapsto f(\pm\infty, z)$ is of Fisher-KPP type, as well as their attractivity for the initial value problem are considered in \cite{Fang2016can,berestycki2018forced,Tang2026stability}.} 
For some related recent work, see  \cite{yuan2019spatial} for two competing species and \red{\cite{choi2022forced,Giletti2023}} for predator-prey systems.


For the  Fisher--KPP equation with constant coefficients, it is well known that there exists a family of  traveling wave solutions 
parameterized by the wave speed \(c\ge c^*\).
When a shifting habitat is introduced, we propose to study, more generally, the class of entire solutions (those solutions that are defined for $(t,x) \in \mathbb{R}^2$) to understand how the species invasion interacts with the shifting environment.

\begin{problem}
Consider \eqref{e.0312.1} such that 
$f(y,s) \leq f_u(y,0)s$ for $s \geq 0$ and $y\in\mathbb{R}$. Then there exists $\tilde\lambda>0$ such that for each $\lambda \in (0,\tilde\lambda)$, \eqref{e.0312.1} has a unique positive entire solution $U_\lambda \in C^{1,2}(\mathbb{R}^2)$ such that for each $t$
\[
\lim_{x \to -\infty} U_\lambda(t,x) = 0, \quad U_\lambda(t,x) \sim e^{-\lambda x} \text{ as }  {x\to+\infty}.
\]
In case \eqref{e.0312.2} and \eqref{e.0312.3} hold, we can take  $\tilde \lambda= \sqrt{r(+\infty)}$ 
and we have
\[
\lim_{\ep \to 0} U_\lambda\left(\frac{t}{\ep}, \frac{x}{\ep} \right) = \begin{cases}
    0 &\text{ in } \{t<0\}\cup \{t>0,~x<t c_1\} \cup \{t>0,~x > t c_\lambda\},\\
    r(+\infty) &\text{ in } 
    \{t>0,~t c_1<x<tc_\lambda \},
\end{cases}
\]
where $c_\lambda = \lambda + r(+\infty)/\lambda$ if $\lambda \in (0, \sqrt{r(+\infty)})$ and equals to $2\sqrt{r(+\infty)}$ otherwise.
\end{problem}
It would be interesting to characterize the family of entire solutions for monotone or nonmonotone environmental functions $r$, which contains the class of forced traveling waves as a proper subset.



\section{Hamilton--Jacobi Approach to Spreading}

The Hamilton--Jacobi approach was introduced by Freidlin \cite{Freidlin1985limit,freidlin1986geometric}, who
employed probabilistic arguments to study the asymptotic behavior of
solutions to the Fisher--KPP equation modeling the population of a
single species. Subsequently, the result was generalized by Evans and
Souganidis \cite{Evans1989pde} using PDE arguments based on the theory of viscosity solutions of Hamilton-Jacobi type PDEs \cite{crandall1983viscosity,crandall1984some}; See \cite{barles2024modern} for the state-of-the-art of general theory. This method clarifies  how the solutions are driven by the underlying linear problem.


In order to study the asymptotic speed of spread, we take the hyperbolic limit to separate the problem of evaluation of velocity from that of the precise wave form,
\begin{equation}\label{e.wkb}
u^\ep(t,x) = u \left( \frac{t}{\ep},\frac{x}{\ep}\right),
\end{equation}
and we introduce a WKB ansatz to describe the thin front ahead of the invasion wave via the {\it rate function}:
\[
w^\ep(t,x)= -\ep\log u^\ep(t,x).
\]
Information about the spreading behavior of $u^\ep$ is encoded in the asymptotic limit of the rate function $w^\ep$. In the special case of spreading in constant environments, we state the following result which a special case of the theory in \cite{freidlin1986geometric,Evans1989pde}.
%
\begin{theorem}[Spreading in constant environments]
Let $u$ be a solution to \eqref{e.0312.1}  with
nonnegative, compactly supported initial data. Assume the nonlinearity is spatially homogeneous, i.e. $f=f(s)$ in \eqref{e.0312.1} and such that
\[
f'(0)>0\qquad  \text{ and }\qquad \sup_{s >0}f(s){s^{-1}} \leq f'(0). 
\]
Then
as $\ep \to 0$, $w^\ep(t,x)$ converges to some limit $\hat w(t,x)$ in $C_{loc}((0,\infty)\times \mathbb{R})$ and 
\[
\lim_{\ep \to 0} u^\ep 
\begin{cases}
    \red{= } 0 & \text{ uniformly on compact subsets of }\{\hat w>0\},\\
    >0 & \text{ uniformly on compact subsets of } {\rm{Int}}\,\{\hat w=0\}.
\end{cases}
\]
Moreover, $\hat w$ is the unique viscosity solution of 
\begin{equation}\label{e.hje.1}
\min\{w, w_t + H(w_x) \} = 0 \qquad \text{ in }(0,\infty)\times\mathbb{R}
\end{equation}
satisfying the initial condition
\red{\begin{equation}\label{e.ini1}w(0,x) =  \begin{cases}0 &\text{ for } x=0,\\
+\infty &\text{ for } x \neq 0.
\end{cases}
\end{equation}
understood in the relaxed sense; see Remark~\ref{rmk.0725.1},}
where $H(\lambda) = \lambda^2 + f'(0)$.
\end{theorem}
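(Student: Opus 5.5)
We follow the Evans--Souganidis half-relaxed limit strategy. The argument has three parts: derive the equation for $w^\ep$, obtain local bounds, and identify the limit through a uniqueness principle; the conclusions about $u^\ep$ then follow.

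\textbf{Equation and bounds.} Substituting $u^\ep=e^{-w^\ep/\ep}$ into the rescaled equation $u^\ep_t=\ep u^\ep_{xx}+\ep^{-1}f(u^\ep)$ gives
\[
w^\ep_t-\ep w^\ep_{xx}+|w^\ep_x|^2+\frac{f(u^\ep)}{u^\ep}=0 .
\]
By hypothesis $f(s)/s\le f'(0)$. Moreover $f(u^\ep)/u^\ep\to f'(0)$ wherever $u^\ep\to0$, that is, wherever $w^\ep$ stays positive.

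For the upper bound, compare $u$ with the explicit supersolution $C\,(4\pi t)^{-1/2}e^{f'(0)t-x^2/(4t)}$ (heat kernel times $e^{f'(0)t}$), together with $u\le\max\{\|u_0\|_\infty,M\}$, where $M$ is a zero of $f$ (or simply use boundedness). This yields
\[
w^\ep(t,x)\ge \frac{x^2}{4t}-f'(0)t-o(1),
\]
and a matching upper bound on $w^\ep$ follows from a lower bound on $u$ via the linearized heat kernel on short times. These bounds give local boundedness of $w^\ep$ on compacts of $(0,\infty)\times\R$, so the half-relaxed limits $w^*=\limsup^*w^\ep$ and $w_*=\liminf_*w^\ep$ are finite. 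They also show that the initial trace is $0$ at $x=0$ and $+\infty$ elsewhere, which is the relaxed sense of \eqref{e.ini1}. Finally $w^\ep\ge -\ep\log\sup u$, so $w_*\ge0$.

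\textbf{Viscosity inequalities.} By standard stability arguments, $w^*$ is a viscosity subsolution of $\min\{w,w_t+H(w_x)\}\le0$, using $f(u)/u\le f'(0)$ everywhere. Likewise $w_*$ is a supersolution of $\min\{w,w_t+H(w_x)\}\ge0$. For the supersolution property we use $w_*\ge0$, and at points where $w_*>0$ we have $u^\ep\to0$ locally, so the reaction term tends to $f'(0)$. Here $H(p)=p^2+f'(0)$, with the sign convention matching the equation above (the reaction contributes $+f'(0)$ after moving to the left).

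\textbf{Identification and conclusion.} A comparison principle for \eqref{e.hje.1} with the singular initial data (handled via the explicit candidate) gives $w^*\le w_*$, hence uniform local convergence to $\hat w$. Uniqueness can be shown through the optimal control formula
\[
\hat w(t,x)=\max\left\{\frac{x^2}{4t}-f'(0)t,\;0\right\}.
\]
On compact subsets of $\{\hat w>0\}$ we then have $u^\ep\le e^{-\delta/\ep}\to0$.

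The main obstacle is the positivity on compact subsets of $\mathrm{Int}\{\hat w=0\}$, since $w^\ep\to0$ only says $u^\ep$ is not exponentially small. We would follow Evans--Souganidis. At a point $(t_0,x_0)$ in the interior, take a strict test function $\phi=w+|x-x_0|^2+(t-t_0)^2$ perturbation. Suppose $\liminf u^\ep(t_0,x_0)=0$ along some sequence. Then consider $v^\ep=-\ep\log(u^\ep)$ near the point and use that the interior of $\{\hat w=0\}$ forces $w^\ep\to0$ in a neighborhood. Since $w^\ep\to0$, there we can deduce $u^\ep\ge e^{-o(1)/\ep}$. Combined with a KPP-type local lemma — a small positive subsolution (principal eigenfunction on a ball of radius $R$, valid since $f'(0)>0$) placed at scale $\ep$ — this shows $u^\ep$ grows to a uniform positive level in time $o(1)$, yielding $\liminf u^\ep>0$ uniformly on compacts.
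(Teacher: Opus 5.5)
\textbf{Gap in the identification step.} Your architecture matches the paper's template: the paper does not reprove this statement, but its Hamilton--Jacobi proof of Theorem~\ref{t.0312.1} in Section~\ref{s.4} follows Evans--Souganidis. The identification step, however, has a genuine gap.

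\emph{The relaxed initial traces are not ordered.} The trace is not ``$0$ at $x=0$, $+\infty$ elsewhere'' for both half-relaxed limits. As in Remark~\ref{rmk.0725.1}, only $w_*(0,0)=0$, whereas $w^*(0,x)=+\infty$ for every $x$, including $x=0$: along $(t',x')=(\varepsilon^2,\sqrt{\varepsilon})$ the Gaussian tail forces $w^\varepsilon\to+\infty$. So $w^*$ and $w_*$ are not ordered at $t=0$, and no comparison principle ``with the singular data'' yields $w^*\le w_*$. Comparing $w^*$ with the explicit $\hat w$ fails at the origin for the same reason.

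\emph{This information cannot identify the limit.} For $C>0$, the function $\max\{0,\frac{x^2+4C}{4t}-f'(0)t\}$ is a subsolution with trace $+\infty$ everywhere, yet it is positive at $(t,0)$ for small $t$, where $\hat w=0$.

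\emph{What is missing} is a time shift combined with information at positive times. The paper compares $\overline w(t+\eta,x)$, whose data at $t=0$ are finite, with $\underline w$. It uses $\overline w(t,0)=0$ for $t>0$, obtained from a stationary subsolution $\delta\sin(x/L)$, and local Lipschitz continuity of subsolutions to let $\eta\to0$. Evans--Souganidis use the control formula instead.

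\emph{A short version from your own bounds.} Your two lower bounds already give $w_*\ge\hat w$. With $f(s)\ge -Ms$ on the range of $u$, the estimate $u\ge e^{-Mt}G_t*u_0$ gives $w^*(t,x)\le \frac{x^2}{4t}+Mt$. Hence $w^*(\cdot+\eta,\cdot)$ is a subsolution with data at most $\frac{x^2}{4\eta}+M\eta$. Comparing it with the explicit solution $\max\{0,\frac{x^2}{4(t+\eta)}-f'(0)t+M\eta\}$ gives $w^*(s,x)\le\max\{0,\frac{x^2}{4s}-f'(0)(s-\eta)+M\eta\}$ for $s\ge\eta$. Letting $\eta\to0$ yields $w^*\le\hat w$. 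This is precisely where $w^*=0$ inside the cone $|x|<2\sqrt{f'(0)}\,t$ comes from, i.e.\ the actual spreading statement, so it cannot be waved through.

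\textbf{Swapped viscosity inequalities.} The bound $f(s)/s\le f'(0)$ gives $w^\varepsilon_t-\varepsilon w^\varepsilon_{xx}+|w^\varepsilon_x|^2+f'(0)\ge 0$. That is the \emph{supersolution} inequality for $w_*$, and it holds at every point, including where $w_*=0$, which your argument omits. It cannot produce the subsolution inequality. The subsolution inequality for $w^*$ at a point where $w^*>0$ is the one that needs $f(u^\varepsilon)/u^\varepsilon\to f'(0)$. This holds because the approximating maximum points $(t_\varepsilon,x_\varepsilon)$ of $w^\varepsilon-\phi$ satisfy $w^\varepsilon(t_\varepsilon,x_\varepsilon)\to w^*(t_0,x_0)>0$, so $u^\varepsilon(t_\varepsilon,x_\varepsilon)\to 0$. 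Swapping the two justifications repairs this.

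\textbf{Positivity step.} Drop the test-function sentences, which lead nowhere. The second half of your argument is a valid, more hands-on alternative to the paper's Step~\#4. Starting from $u^\varepsilon\ge e^{-o(1)/\varepsilon}$ on a neighborhood, a capped principal-eigenfunction subsolution on a ball of radius $R$ in original variables reaches a fixed level in rescaled time $o(1)$. The paper instead uses the equation at a maximum point of $w^\varepsilon-|x-x_0|^2-(t-t_0)^2$. There it forces $f(u^\varepsilon)/u^\varepsilon\le o(1)$, hence a lower bound on $u^\varepsilon$ at that point. This bound transfers to $(t_0,x_0)$ via $w^\varepsilon(t_\varepsilon,x_\varepsilon)\ge w^\varepsilon(t_0,x_0)$. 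Both routes rely on the locally uniform convergence $w^\varepsilon\to0$, so they inherit the gap above.
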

\blue{
\begin{remark}
Condition \eqref{e.ini1} holds in the sense that the half-relaxed limits (see \eqref{e.halfrelax}) satisfies 
\[
\begin{cases}
\overline{w}(0,x)=\limsup\limits_{\ep \to 0\atop (t',x') \to (0,x)} w^\ep(t',x')= +\infty, \quad \\
\underline{w}(0,x)=\liminf\limits_{\ep \to 0\atop (t',x') \to (0,x)} w^\ep(t',x')=\begin{cases}
 0 &\text{ if }x = 0,\\
 +\infty  &\text{ if }x \neq 0.
\end{cases}
\end{cases}
\]
This can be derived directly from \eqref{e.wkb} and we omit the details.
\label{rmk.0725.1}
\end{remark}
\begin{remark}
    By constructing a  stationary weak subsolution of the form $\delta \sin(x/L)$ for some $\delta,L>0$, one can further show that $\overline{w}(t,0) = \underline{w}(t,0) = 0$ for all $t >0$. In this sense, the rightward spreading speed can be alternatively studied by considering the initial data $w(0,x) = 0$ for $x <0$ and $w(0,x) = +\infty$ for $x >0$, or by considering the limiting problem in a sector; See Section  \ref{s.4}.
\end{remark}
}

For the reader's convenience, we recall the definition of a viscosity solution \cite{CrandallIshiiLions1992}.
\begin{definition}[Viscosity solution]
Let $H:\mathbb{R}\to\mathbb{R}$ be continuous and let $\Omega$ be an open set in $\mathbb{R}^2$. Consider the Hamilton-Jacobi equation
\begin{equation}\label{e.0315.hje}
\min\{w,\, w_t + H(w_x)\}=0 
\quad \text{in } \Omega. \end{equation}

\begin{itemize}
\item[{\rm(i)}] Let $w: \Omega \to [-\infty, \infty)$ be an upper semicontinuous function on $\Omega$. We say that $w$ is a viscosity subsolution of \eqref{e.0315.hje} if  
for any $\phi\in C^1(\Omega)$ and any 
$(t_0,x_0)\in(0,\infty)\times\mathbb{R}$ such that 
$w-\phi$ attains a strict local maximum at $(t_0,x_0)$ and $w(t_0,x_0)>0$, we have
\[
\phi_t(t_0,x_0) + H(\phi_x(t_0,x_0)) \le 0.
\]

\item[{\rm(ii)}]
Let $w: \Omega \to (-\infty, \infty]$ be a lower semicontinuous function on $\Omega$. We say that $w$ is a viscosity supersolution of \eqref{e.0315.hje} if   $w \geq 0$ in $\Omega$, and  
for any $\phi\in C^1((0,\infty)\times\mathbb{R})$ and any 
$(t_0,x_0)\in(0,\infty)\times\mathbb{R}$ such that 
$w-\phi$ attains a strict local minimum at $(t_0,x_0)$, we have
\[
\phi_t(t_0,x_0) + H(\phi_x(t_0,x_0)) \ge 0.
\]

\item[{\rm(iii)}] If $w$ is both a viscosity subsolution and a viscosity supersolution, 
then $w$ is called a viscosity solution.
\end{itemize}
\label{d.0409.1}
\end{definition}

\medskip


\noindent {\bf The Lagrangian Formulation and Selection of Speed Based on Local versus Nonlocal Information.} 
When $H$ is strictly convex and coercive,  
the function \(w\) admits the following control (or ``least action principle") formulation \cite{barles2024modern}:
\begin{equation}\label{e.lagrangian}
w(t,x)
=
\max\left\{
0,\;
\inf_{\substack{\gamma \in AC([0,t])\\ \gamma(0)=0,\ \gamma(t)=x}}
\int_{0}^{t} L(\dot{\gamma}(s))\,ds
\right\},
\end{equation}
where the function $L$ is the Legendre transform of $H$, which is given by
\begin{equation}\label{e.legendre}
L(v):= \max_{\lambda \in \mathbb{R}} \left[\lambda v - H(\lambda)\right].
\end{equation}
\begin{remark}
\blue{For general initial data $w(0,x) = w_0(x)$, the control formulation is
\begin{equation}\label{e.0725.1}
w(t,x)
=
\max\left\{
0,\;
\inf_{\substack{\gamma \in AC([0,t])\\  \gamma(t)=x}}
\int_{0}^{t} L(\dot{\gamma}(s))\,ds + w_0(\gamma(0))
\right\}.
\end{equation}
In case of the initial data specified in \eqref{e.ini1}, the class of admissible paths are reduced to the ones satistying $\gamma(0)=0$ and $\gamma(t)=x$.}   
\end{remark}


  From a genealogical point of view, the optimal path can be seen as an ancestral
lineage in an asexually reproducing population, where the Lagrangian $L$ represents the reproductive cost. Consequently, the lineage that minimizes the cost corresponds to
the typical ancestry of individuals located at position $x$ at time $t$; See \cite{florien2022ancestral} for a derivation based on ancestral stochastic processes.

To evaluate $w(t,x)$ for fixed $t>0$ and $x>0$, we consider all possible trajectories $\gamma(s)$ that an individual could have followed to reach $x$ at time $t$. Along each trajectory, the individual experiences both growth (as a payoff) and dispersal (as a cost), which together determine the cumulative cost functional $\int_0^t L(\dot\gamma(s))\,ds$. The value $w(t,x)$ is then the minimal cumulative cost over all such trajectories. 
In biological terms, this means that the population observed at $(t,x)$ is dominated by the lineage whose ancestral path minimizes the cost. 


When the Lagrangian $L(v)$ is a convex function of $v$, one can show that the minimizing path is the straight line
\begin{equation}\label{e.0412.1}
\gamma^*(s)=\frac{x}{t}s,    
\end{equation}
which is the basis of the Lax-Oleinik formula
\[
\red{w(t,x)=\max\left\{0,\int_0^t L(\dot{\gamma}^*(s))\,ds\right\}
=
\max\left\{0, t\,L\!\left(\frac{x}{t}\right)\right\}.}
\]
The above Lagrangian formulation can be interpreted as follows:
in a homogeneous environment,  individuals observed at the leading edge of the spreading
population correspond to lineages whose trajectories remain near the
expanding front at all previous time.  Indeed, let $c^*$ be the spreading speed according to Definition \ref{spread}, then we have
\begin{equation}\label{e.0315.4}
   \{(t,x):~ w=0\} = \{(t,x):~ 0\le x \leq c^* t\}, 
\end{equation}
i.e. the region occupied by the population has the free boundary which is
 given by the ray 
\[
\Gamma_{front}=\{(t,x):\, t\geq 0,\; x=c^* t\}.
\]
Hence, an individual located at the wave front at some time 
\((\bar t,\bar x)=(\bar t,c^*\bar t)\) must have kept pace with the wave front for all earlier times. Motivated by this observation, we introduce the following definition.
\begin{definition}[Local versus Nonlocal Selection of Spreading Speed] \quad 

\begin{itemize}    
\item[{\rm(i)}]    We define a (rightward) spreading speed to be {\it locally selected} (or {\it locally pulled}) if for each $(\bar t, \bar x) \in \Gamma_{front}$, 
    then the corresponding minimizing path $\gamma: [0,t] \to \mathbb{R}$ associated to the control formulation is given by the straight path connecting $(0,0)$ and $(\bar t, \bar x)$, i.e. \eqref{e.0412.1} holds. 

\item[{\rm(ii)}]    A front speed is {\it nonlocally selected} (or {\it nonlocally pulled}) if the minimizing paths associated with points on the front deviate from the front during their evolution.
\end{itemize}
    \label{d.0315.1}
\end{definition}


Solving $L(c^*) = 0$ using \eqref{e.legendre}, we obtain the Freidlin-Gartner formula \cite{gartner1979propagation}:
\begin{equation}\label{e.fg}
c^* = \inf_{\lambda>0} \frac{H(\lambda)}{\lambda}.     
\end{equation}
\begin{corollary}
    In homogeneous or periodic environments such that the Hamiltonian function $\lambda \mapsto H(\lambda)$ is strictly convex, then the spreading speed is always locally selected, and \eqref{e.fg} holds.
\end{corollary}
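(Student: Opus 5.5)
The plan is to work directly with the Lagrangian (control) formulation \eqref{e.lagrangian}. When $H$ is strictly convex and does not depend on $(t,x)$, the Legendre transform $L$ in \eqref{e.legendre} is convex and independent of position.

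\textbf{Step 1: straight lines are optimal.} Take any admissible $\gamma\in AC([0,t])$ with $\gamma(0)=0$ and $\gamma(t)=x$. Jensen's inequality gives
\[
\int_0^t L(\dot\gamma(s))\,ds \ \ge\ t\,L\!\Big(\tfrac1t\int_0^t\dot\gamma\,ds\Big)=t\,L(x/t).
\]
If $L$ is strictly convex, equality forces $\dot\gamma\equiv x/t$ a.e. Strict convexity of $L$ holds wherever $L$ is finite, since $H$ is $C^1$ and strictly convex; for $H(\lambda)=\lambda^2+f'(0)$ one computes explicitly $L(v)=v^2/4-f'(0)$. So the minimizer is unique and equal to $\gamma^*$ in \eqref{e.0412.1}. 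This yields the Lax--Oleinik formula $w=\max\{0,tL(x/t)\}$.

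\textbf{Step 2: speed formula and local selection.} Because $L(v)\to+\infty$ and $L(0)=-H(0)<0$, the zero set of $w$ is $\{0\le x\le c^*t\}$, where $c^*$ is the positive root of $L(c^*)=0$. We have
\[
L(c)\le 0 \iff \lambda c-H(\lambda)\le 0 \text{ for all } \lambda,
\]
which for $\lambda>0$ means $c\le H(\lambda)/\lambda$. Hence $c^*=\inf_{\lambda>0}H(\lambda)/\lambda$, which is \eqref{e.fg}. For a front point $(\bar t,c^*\bar t)$, Step 1 shows the unique minimizer is $s\mapsto c^*s$, which stays on $\Gamma_{front}$. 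This is exactly Definition~\ref{d.0315.1}(i).

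\textbf{Step 3: periodic environments.} Here $f=f(x,u)$ is $1$-periodic in $x$. I would invoke homogenization of the WKB limit (Evans--Souganidis, Freidlin): $w^\ep$ converges to the solution of \eqref{e.hje.1} with the effective Hamiltonian $H(\lambda)=$ the principal periodic eigenvalue $k(\lambda)$ of
\[
\phi''-2\lambda\phi'+(\lambda^2+\partial_uf(x,0))\phi.
\]
This eigenvalue is convex in $\lambda$, and I would take the strict convexity assumed in the statement as given. The effective equation then has constant coefficients, and Steps 1--2 apply verbatim, giving $c^*=\inf_{\lambda>0}k(\lambda)/\lambda$.

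\textbf{Expected obstacle.} The main difficulty is justifying the control representation and its uniqueness when $L$ may take the value $+\infty$ or fail to be strictly convex at points where $H$ is not differentiable. I would handle this by noting that $H$ is finite, superlinear and strictly convex. These properties make $L$ finite, $C^1$ and strictly convex on $\mathbb R$, which validates the equality case in Jensen's inequality. In the periodic case the only real input is the cited homogenization result.
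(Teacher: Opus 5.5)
Your proposal is correct and follows the paper's own route: convexity of $L$ plus Jensen's inequality gives straight-line minimizers and the Lax--Oleinik formula, solving $L(c^*)=0$ via the Legendre transform yields \eqref{e.fg}, and the periodic case reduces to the same computation with the principal eigenvalue as effective Hamiltonian. Two small corrections do not affect the conclusion for the smooth, even Hamiltonians at hand: strict convexity of $L$, which you need for uniqueness of the minimizer, comes from differentiability of $H$ rather than from its strict convexity, and in general $L(0)=-\min H$ rather than $-H(0)$.
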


Note that the above formula has been generalized to periodic environments \cite{weinberger2002spreading, berestycki2008asymptotic,rossi2026freidlingartnerformulaasymptoticprofile} and more general settings \cite{Nadin2020}, where the Hamiltonian (or the energy) $H(\lambda)$ is typically an eigenvalue measuring the linear growth of  modes decaying spatially at rate $\lambda$.

In the present case $H(\lambda) = \lambda^2 + f'(0)$, we have
\begin{equation}\label{e.0319.2}
L(v)=\frac14|v|^2 - f'(0),
\end{equation}
so the representation \eqref{e.lagrangian} 
becomes
\[
w(t,x)
=
\max\left\{
0,\;
\inf_{\substack{\gamma(0)=0\\\gamma(t)=x}}
\int_{0}^{t}
\left(
\frac14|\dot{\gamma}(s)|^2 - f'(0)
\right) ds
\right\}
=\max\left\{0, tL\left(\frac{x}{t}\right)\right\}.\]
and
\[
L(c^*) = 0 \quad \Longleftrightarrow \quad c^* = \inf_{\lambda>0} \frac{H(\lambda)}{\lambda} = 2\sqrt{f'(0)}.
\]

This provides the exact asymptotics for the linear diffusion equation, whose limiting Hamilton-Jacobi equation is $w_t + H(w_x)=0$\footnote{Compare with \eqref{e.hje.1}.} with the unique viscosity solution given by $w(t,x)=tL\left( \frac{x}{t}\right)$. This correspondence demonstrates that the thin-front limit (i.e., the region where 
$w(t,x) >0$) is completely determined by the linear problem 
\begin{equation}\label{e.linear}
    u_t= u_{xx} + f'(0) u,\quad t>0, x\in\mathbb{R}
\end{equation}
with $u(0,\cdot)=u_0:\R\to \R_+$ nontrivial and compactly supported. 

\medskip

\noindent {\bf Pulled Fronts.} For nonlinear systems of equations, we say that a nonlinear front is a {\it pulled front} if its asymptotic speed $c^*$ is governed by the linearized equation around the unstable state
\cite{rothe1981covergence,stokes1976two,garnier2012inside}.
As the above discussion suggests, a broad class of nonlinear systems has {\it pulled fronts}, including \eqref{e.pde1} satisfying the so-called KPP equations \cite{KolmogorovPetrovskyPiskunov1937,lau1985nonlinear,aronson1978multidimensional}:
\begin{align}\label{KPPcond}
   f'(0)>0 \quad \text{ and }\quad f(s) \leq f'(0)s \quad \text{ for }s>0. 
\end{align}
It should be stressed that problems of front propagation into unstable states usually have a continuum of traveling wave solutions, so the above can also be viewed as answering the question of which of these traveling wave solutions are being selected for a large class of initial conditions.

For spreading in general $\mathbb{R}^n$, a natural approach to study front propagation in higher dimension is to rescale and find an effective interface condition or a moving boundary approximation \cite{barles1993front}. In such a case, the propagation of the front is said to be {\it geometric}, i.e. the normal velocity of the front is entirely known based on the location of the interface boundary only. For the equation \eqref{e.pde1} with KPP condition \eqref{KPPcond}, this holds true
when the initial data vanishes outside some bounded zone, in which the normal velocity is given by the minimal speed of the associated traveling wave solutions of the 1-dimensional problem. However, it can be shown that this is not true in general when the initial data has exponential decay at infinity due to the fact that the dynamically important region is heavily influenced by the thin tail of the solution located {\it ahead} of the nonlinear transition associated with the front. One then has to resolve the global problem to determine which wave speed is being selected. Such a reasoning applies to the following cases:
(i) homogeneous environment with noncompactly supported initial data or (ii) 
environments with a shifting habitat.

In fact, if we replace the initial data of \eqref{e.linear} by $u_0(x) = e^{-\lambda x_+}$, where $x_+ = \max\{x,0\}$, then one can show that 
\begin{equation}\label{e.0409.1}
c^* = \begin{cases}
2\sqrt{f'(0)} &\text{ for }\lambda >\lambda_{min},\\
\lambda + \frac{f'(0)}{\lambda} &\text{ for }0<\lambda \le\lambda_{min},
\end{cases}
\end{equation}
where $\lambda_{min} = \sqrt{f'(0)}$. 
This can be understood by considering the heat kernel representation \cite{uchiyama1978behavior,van2003front} of the thin tail of solutions to  \eqref{e.linear} (i.e. {$\lambda>\lambda_{\min}$}),  where 
\[
u(t,x) = \begin{cases}  
    \exp\left( -  \frac{x_+^2}{4t} + tf'(0)   + o(t)\right)  &\text{ for }2\lambda_{\min}<\frac{x}{t} < 2\lambda,\\
    \exp \left( -\lambda x +t(\lambda^2 + f'(0))+o(t) \right) &\text{ for }\frac{x}{t} > 2\lambda.
\end{cases}
\] 
This corresponds to the fact that the unique viscosity solution to the equation 
\begin{equation}
  \begin{cases} 
  J_t + |J_x|^2 + f'(0) = 0 &\text{ in }(0,\infty) \times \mathbb{R},\\
  J(0,x) = \lambda x_+ &\text{ for }x \in \mathbb{R}\end{cases}
\end{equation}
is given by
\begin{equation}\label{e.0315.J}
J(t,x)= \begin{cases}
 \frac{x_+^2}{4t} - tf'(0)   &\text{ for }\frac{x}{t} < 2\lambda,\\
    \lambda x - t(\lambda^2 + f'(0))  &\text{ for }\frac{x}{t} \ge 2\lambda,
\end{cases}
\end{equation}
By setting $J(t,c^*t) = 0$, we obtain the formula \eqref{e.0409.1}.

In a sense, a pulled front spreads in the way the linear spreading point conditions
force it to do. Again, this selection of wave speed by the initial data holds true as well for the nonlinear counterpart \cite{uchiyama1978behavior}, with the unique viscosity solution of
\begin{equation}
  \begin{cases} 
  \min\{w,w_t + |w_x|^2 + f'(0)\} = 0 &\text{ in }(0,\infty) \times \mathbb{R},\\
  w(0,x) = \lambda x_+ &\text{ for }x \in \mathbb{R}\end{cases}
\end{equation} 
being given by $w(t,x) = \max\{J(t,x),0\}$. \blue{For $\lambda\in(0,\sqrt{f'(0)})$, one observes that the speed is given by
\[
c^* = \frac{H(\lambda)}{\lambda}  \qquad \text{ with }\quad  H(\lambda) =\lambda^2 + f'(0),
\]
while 
the control formulation \eqref{e.0725.1} has the optimal path $\gamma(s)$ (the one associated with the endpoint  $(t,c^*t)$) given by
\begin{equation}\label{e.725.2}
\gamma(s)=c^*t-2(t-s)\lambda \quad\text{ for }s\in[0,t].
\end{equation}
Note that the optimal path is straight but does not stay on the invasion front (the path connecting $(0,0)$ and $(t,c^*t)$). 
So 
the speed $c^*$ 
is {\it nonlocally selected} in the sense of Definition \ref{d.0315.1} when the initial data has an exponential tail.}

\section{Alternative Proof of Theorem \ref{t.0312.1}}
\label{s.4}

Next, we illustrate the Hamilton-Jacobi method by outlining an alternative proof of Theorem \ref{t.0312.1} (due to Li et al. \cite{li2014persistence}) concerning the shifting habitat.

\begin{proof}[Second proof of Theorem \ref{t.0312.1} (via the Hamilton-Jacobi method).]
Let $u$ be a solution of the PDE \eqref{e.0312.1} with shifting nonlinearity satisfying \eqref{e.0312.2} and \eqref{e.0312.3}, and such that the initial data has compact support in $\mathbb{R}$. 


We address the case in which the population can keep up with the favorable environment (where $r \approx r(+\infty)$) and persist. Mathematically, this means $2\sqrt{r(+\infty)} > c_1$. In such a case,   \blue{we only need the following weaker version of \eqref{e.persist0723}:}\footnote{The condition \eqref{e.persist} can be compared with \cite[Condition (2.4)]{Nadin2020} concerning propagation success. \blue{See also \cite[Theorem 1.3(iii)]{FangPengZhao2021} for the validation of this condition.}}
\begin{equation}\label{e.persist}
\blue{\liminf_{t \to \infty} \inf_{(c_1+\delta) t \leq x \leq (c_1 + 2\delta) t} u(t,x) >0 \qquad \text{ for each small }\delta>0.} 
\end{equation}


\medskip

\noindent {\bf Step \#1.} 
Since $r$ is bounded, it follows from standard arguments (see, e.g., \cite{Evans1989pde}) that  for each compact subset $K \subset (0,\infty)\times\mathbb{R}$, there exists a constant $C_K>0$ independent of $\epsilon \in (0,1]$ such that
\[
-\ep C_K \leq   w^\ep\left(t,x\right)= -\ep \log u\left(\frac{t}{\ep}, \frac{x}{\ep} \right) \leq C_K \quad  \text{ for }(t,x) \in K.
\]
Hence, the following half-relaxed limits exist and are nonnegative quantities:
\begin{equation}\label{e.halfrelax}
    \overline{w}(t,x) = \limsup_{\ep \to 0^+ \atop (t',x') \to (t,x)} w^\ep(t',x') \qquad \text{ and }\qquad     \underline{w}(t,x) = \liminf_{\ep \to 0^+ \atop (t',x') \to (t,x)} w^\ep(t',x').
\end{equation}
{It is straightforward to observe that }
\begin{equation}\label{e.0315.1}
    \blue{\overline{w}(0,x) =\underline{w}(0,x) = +\infty \quad \text{ for }x > 0.}\end{equation}
Moreover, \eqref{e.persist} implies that, for each $0< \delta \ll 1$, 
\begin{equation}\label{e.0409.2}
    \underline{w}(t,x) = \overline{w}(t,x) = 0 \quad \blue{\text{ for } \frac{x}{t} \in (c_1+\delta,c_1 + 2\delta).}
\end{equation}
We emphasize that the above holds for every $\delta$ sufficiently small.

\medskip

\noindent {\bf Step \#2.} Next, we use the comparison principle to show that the above half-relaxed limits coincide and hence $w^\ep$ converges in $C_{loc}$. This argument follows an idea from \cite{barles1987discontinuous}. Indeed, $\overline{w}$ (resp. $\underline{w}$) is a subsolution (resp. supersolution) of
\begin{equation}\label{e.0315.15}
    \min\{w,w_t + H_1(w_x)\} = 0 \quad \text{ in }\Omega_1:  = \{(t,x):~ t>0, ~ x > c_1t\},
\end{equation}
where 
\(
H_1(p)  =  |p|^2 + r(+\infty).\)

Passing to the half-relaxed limits, it is classical that $\underline{w}$ is a supersolution while $\overline{w}$ is a subsolution \cite{barles1987discontinuous}, and we also have $\underline{w}\leq \overline w$ by construction.

\blue{If we can show $\underline{w} \leq \overline{w}$ by establishing a comparison principle in $\Omega_1:=\{(t,x):~ x > c_1 t,~t>0\}$, then $w^\ep$ converges in $C_{loc}(\Omega_1)$ as $\ep \to 0$, where the limit is the common value $\overline{w} = \underline{w}$ (see Remark \ref{rmk.0725.1}).

To apply the comparison principle, we need the initial data of the super- and subsolutions to be ordered in the correct way, i.e. $\overline{w}\leq \underline{w}$ on the parabolic boundary. However, this is not the case, since $\underline{w}(0,0)=0$ while $\overline{w}(0,0) = +\infty$.}
This issue was resolved in \cite{Evans1989pde} using the control formulation. We present here a PDE-based sliding argument that was used \cite{henderson2026hamilton}.

For each $\eta>0$, let $\overline w^\eta(t,x):= \overline{w}(t+ \eta, x + c_1\eta)$, then one can show that $\overline{w}^\eta(0,x) <\infty$ for all $x \in\mathbb{R}$ and is a viscosity subsolution of \eqref{e.0315.15}. Now, from \eqref{e.0409.2}
\[
\overline{w}^\eta (t,x)=0=\underline{w}(t,x) \quad \text{ pointwise in }\{(t,x):~ x=c_1t,~t\geq 0\} 
\]
and using the fact that $\overline{w}(t,x)$ takes finite value for each $t>0$, we have
\[
\overline{w}^\eta (t,x) < +\infty =  \underline{w}(t,x) \qquad \text{ pointwise in }\{(0,x):~ x>0\}. 
\]
It then follows from standard comparison principle (see, e.g. \cite[Theorem B.1]{Evans1989pde} for Lipschitz continuous solutions or \cite[Theorem A.1]{liu2021asymptotic} for general case) that 
\[
\overline{w}(t+\eta,x + c_1 \eta)= \overline{w}^\eta (t,x)  \leq \underline {w} (t,x) \quad \text{ in }\Omega_1.
\]
Letting $\eta \to 0^+$ and using the fact that $\overline{w}$ is locally Lipschitz continuous in $\Omega_1$, we have\footnote{By construction, we only expect $\overline{w}$ to be upper semi-continuous. Here, actually we used the standard fact that $\overline{w}$ is locally Lipschitz continuous in $\Omega_1$. This is a general property of {\it subsolution} when the Hamiltonian is convex and coercive; See, e.g. \cite[Proposition 1.14]{ishii2013short} or \cite[Lemma 4.5]{henderson2026hamilton}.}
\[
\overline{w}(t,x) \leq \underline{w} (t,x) \quad \text{ in }\Omega_1.
\]
(Even though the inequality fails at $(t,x) = (0,0)$.)
Since by construction, $\overline{w} \geq \underline{w}$ in $\Omega_1$, so we have $\overline{w} = \underline{w}$ in $\Omega_1$. This proves the existence and uniqueness of viscosity solution to \eqref{e.0315.15} verifying the initial-boundary data:
\begin{equation}\label{e.0315.b}
w(0,x)=+\infty \quad \text{ for }x>0, \quad \text{ and }\quad w(t,c_1 t) = 0 \quad \text{ for }t>0.
\end{equation}

\medskip

\noindent {\bf Step \#3.} 
Using $2\sqrt{r(+\infty)} > c_1$, one observes that
\begin{equation}\label{e.0319.1}
w(t,x) = t\max\left\{0, \frac{1}{4}\left|\frac{x}{t}\right|^2 - r(+\infty)  \right\} \quad \text{ for }t>0,~ x > c_1 t,
\end{equation}
is a viscosity solution\footnote{It is a classical solution and hence a viscosity solution when $x \neq 2\sqrt{r(+\infty)}t$, so it suffices to verify Definition \ref{d.0409.1} for the case $x=2\sqrt{r(+\infty)}t$. We leave this verification for the reader.} of \eqref{e.0315.15}. Hence, it must be the unique viscosity solution of \eqref{e.0315.15} verifying \eqref{e.0315.b}. By the definition of $w$ and the fact that $w>0$ in $\{(t,x):~ x > 2\sqrt{r(+\infty)}t\}$, this immediately gives 
\[
\limsup_{t\to\infty} \sup_{x > ct} u(t,x) = 0 \quad \text{ for each }c>2\sqrt{r(+\infty)}. 
\]

\medskip
\blue{
\noindent {\bf Step \#4.} Next, we claim that that for any compact subset $K \subset {\rm Int}\,\{w=0\} = \{(t',x'):~ t'> 0,~ c_1t'<x' < 2\sqrt{r(+\infty)}\}$, we have
\begin{equation}\label{56}
\liminf_{\ep \to 0} \inf_K u\left(\frac{t}{\ep},\frac{x}{\ep} \right) \geq r(+\infty).    
\end{equation}
To prove \eqref{56}, we recall the arguments in \cite[Section 4]{Evans1989pde}.
Let $K$ and $K'$ be compact subsets so that
\[
K \subset \operatorname{Int}K' \subset K' \subset
\{(t',x'):~ t'> 0,~ c_1t'<x' < 2\sqrt{r(+\infty)}\}.
\]
Then  $w^\varepsilon(t,x)\to0$ uniformly in $K'$.
Fix $(t_0,x_0)\in K$ and consider the test function
\[
\phi(t,x)=|x-x_0|^2+(t-t_0)^2.
\]
For all small $\varepsilon$, the function
$w^\varepsilon-\phi$ has a local maximum point
$(t_\varepsilon,x_\varepsilon)$ such that
$(t_\varepsilon,x_\varepsilon)\to(t_0,x_0)$ as
$\varepsilon\to0$.
Furthermore,
$\partial_t\phi(t_\varepsilon,x_\varepsilon),
\partial_x\phi(t_\varepsilon,x_\varepsilon)\to0$,
so that at the point $(t_\varepsilon,x_\varepsilon)$,
\[
o(1)
=\partial_t\phi
-\varepsilon\partial_{xx}\phi
+|\partial_x\phi|^2
\le
\partial_t w^\varepsilon
-\varepsilon\partial_{xx}w^\varepsilon
+|\partial_xw^\varepsilon|^2
\le
u^\varepsilon-r\left(\frac{x_\ep - c_1 t_\ep}{\ep}\right).
\]
This yields
\[
u^\varepsilon(t_\varepsilon,x_\varepsilon)
\ge
r\left(\frac{x_\ep - c_1 t_\ep}{\ep}\right)+o(1) = r(+\infty)+o(1).
\]
In light of
\[
w^\varepsilon(t_\varepsilon,x_\varepsilon)
\ge
(w^\varepsilon-\phi)(t_\varepsilon,x_\varepsilon)
\ge
(w^\varepsilon-\phi)(t_0,x_0)
=
w^\varepsilon(t_0,x_0),
\]
we have
\[
-\varepsilon\log u^\varepsilon(t_\varepsilon,x_\varepsilon)
=
w^\varepsilon(t_\varepsilon,x_\varepsilon)
\ge
w^\varepsilon(t_0,x_0)
=
-\varepsilon\log u^\varepsilon(t_0,x_0),
\]
so that
\[
u^\varepsilon(t_0,x_0)
\ge
u^\varepsilon(t_\varepsilon,x_\varepsilon)
\ge
r(+\infty)+o(1).
\]
Since this argument is uniform for $(t_0,x_0)\in K$,
we deduce \eqref{56}.}

Now, \eqref{56} together with \eqref{e.persist} implies
\[
\liminf_{t\to\infty} \inf_{(c_1+\eta) t< x < (2\sqrt{r(+\infty)}-\eta)t} u(t,x) >0 \quad \text{ for each }\eta\in\left(0, \frac{2\sqrt{r(+\infty)}-c_1}{2}\right). 
\]
\blue{This identifies the asymptotic speed of the leading edge} as $c^* = 2\sqrt{r(+\infty)}$.
\end{proof}
This viewpoint provides an alternative proof of the spreading result obtained
by Bewick, Li, and Fagan \cite{li2014persistence}. Based on the form of $w(t,x)$ given in \eqref{e.0319.1}, it is easy to see that the spreading speed is locally selected according to Definition  \ref{d.0315.1}.

\begin{remark}
    If we consider non-compactly supported initial data satisfying
    \begin{equation}\label{e.bc.0723}
\liminf_{x \to -\infty} u_0(x) >0\quad \text{ and }\quad \frac{1}{x}\log (e^{\lambda x} u_0(x)) \to 0 \quad \text{ as }x \to +\infty,        
    \end{equation}
for some given $\lambda>0$.
Then it is easy to see that $\overline{w}(0,x)=\underline{w}(0,x) = \lambda \max\{x,0\}$. In such a case, the unique viscosity solution is again given by the formula $w(t,x) = \max\{J(t,x),0\}$ with $J(t,x)$ given in \eqref{e.0315.J}. The verification of viscosity solution is left to the reader.
\end{remark}

\section{Nonlocal Pulling in Shifting Habitats with Monotone Environment}\label{s.5}

Next, we consider non-homogeneous environments, i.e. \eqref{e.0312.1} under the KPP condition \eqref{e.0312.2} for some shifting profile $r$. 
We first 
take up the case when $r$ is increasing and bounded from below and above by positive constants and we demonstrate nonlocal selection of spreading speed $c^*$. 

By a suitable rescaling, we may assume without loss of generality that 
 \begin{equation}\label{e.0325.1}
r:\mathbb{R} \to (0,\infty)\quad  \text{ is increasing and }\quad 
r(-\infty) = 1-a  \quad \text{ and }\quad r(+\infty) = 1
\end{equation}
for some $0<a<1$. Observe that, under hyperbolic scaling,
\[
r\left(\frac{x-c_1t}{\ep}\right) \to \begin{cases}
    1 \quad &\text{ for each } (t,x) \text{ such that } \frac{x}{t} > c_1,\\
    1-a \quad &\text{ for each } (t,x) \text{ such that } \frac{x}{t} < c_1.
\end{cases}
\]
In such a case, 
the Hamilton-Jacobi equation is given by \[\min\{w,w_t + H(x/t, w_x)\} = 0\quad \text{ in }\Omega_0 := (0,\infty)\times \mathbb{R}\] with a {\it discontinuous} Hamiltonian
\begin{equation}
 H(x/t,p) =      |p|^2 + 1-a\mathbf{1}_{\{x/t< c_1 \}},
 \end{equation}
with the corresponding Lagrangian being given by
\begin{equation}\label{e.lagrange1}
   L(x/t,v) = \frac{|v|^2}{4} -1 + a   \mathbf{1}_{\{x/t< c_1 \} }.
\end{equation}

For discontinuous Hamiltonian, the classical notion of viscosity solution is due to Ishii \cite{ishii1985hamilton}, involving both the upper and lower envelopes of $H$ (see, e.g., \cite[Definition A.1]{liu2021asymptotic}); See also \cite{lions1987remarks}. 
A self-contained proof of comparison result is presented in \cite[Appendix A]{liu2021asymptotic} by combining the ideas of \cite{ishii1985hamilton,tourin1992comparison}). In case $c_1 < 2\sqrt{r(+\infty)}$, then the limiting rate function $w(t,x)$ coincides with that in \eqref{e.0319.1}. In general, the unique viscosity solution $w(t,x)$ is given by
\begin{equation}\label{e.0315.w}
    w(t,x) = \max\{0, t\hat \rho(x/t)\}.
\end{equation}
Here\footnote{In view of \eqref{e.0315.w},  the second and third cases are only relevant in the definition of $w$ when $c_1 >2$, in which case $\lambda_{nlp}>\sqrt{1-a}$ and $c_{nlp} > 2\sqrt{1-a}$. Note that $\hat\rho = \frac14 s^2 -1$ in case of the linearized equation $u_t - u_{xx} = u$ with $r \equiv 1$.}
\begin{equation}\label{e.0315.hatrho}
   \hat\rho(s) = 
   \begin{cases}
               \frac{1}{4} \left| s\right|^2 - 1 &\text{ for }s \geq  c_1 \\
       \lambda_{nlp}\left(s - c_{nlp} \right) &\text{ for } 2\lambda_{nlp} <s <c_1 \\
               \frac{1}{4} \left| s\right|^2 - 1+a &\text{ for }s \leq   2\lambda_{nlp} , 
   \end{cases}
\end{equation}
where 
\begin{equation}
\lambda_{nlp} = \frac{c_1}{2} - \sqrt{a}\quad \text{  and } \quad c_{nlp} = \lambda_{nlp} + \frac{1-a}{ \lambda_{nlp}}.
\end{equation}
\begin{remark}\label{rmk.0315.2}
\blue{In fact, by the hyperbolic scaling in \eqref{e.halfrelax}, it follows that the limiting rate function is represented by $w(t,x) = t\rho(x/t)$ for some one-variable function $\rho$, and it is standard to deduce a reduced, time-independent equation for $\rho$ (see, e.g.  \cite[Lemma 2.5]{Lam2022asymptotic} or \cite[Lemma 2.2]{liu2021stacked}). 
In fact, $\rho(s)$ is a viscosity supersolution (resp. subsolution) of
\[
\min\{\rho, \rho - s \rho' + |\rho'|^2 + R(s)\}=0  
\]
if and only if $w(t,x) = t\rho(x/t)$ is a viscosity supersolution (resp. subsolution) of
\begin{equation}\label{hje.0722b}
\min\{w, w_t + |w_x|^2 + R(x/t)\}=0 
\end{equation}
In case} 
\[
R(s) = 1-a \mathbf{1}_{\{s < c_1\}}, \qquad \rho(0) = 0,\quad\text{ and }\quad  \lim\limits_{s \to \infty} \rho(s)/s = \infty,
\]
then the unique viscosity solution $\hat\rho$ is given in \eqref{e.0315.hatrho}. Here, the boundary condition at $s=0$ follows from the fact that $\inf r>0$ so that the population persists and spreads at least at the speed $2\sqrt{\inf r}$, while the boundary condition at infinity follows from the fact that the initial data vanishes in $[A,\infty)$ for some $A>0$, and thus
\[
\frac{\hat\rho\left(\frac{1}{t}\right)}{\frac{1}{t}} = w(t,1) \to +\infty \quad \text{ as }t \searrow 0.
\]
\end{remark}


Since the spreading speed is defined via 
\eqref{e.0315.4}, we obtain three regimes for the spreading speed $c^*$ by setting $\hat\rho(s) = 0$ in \eqref{e.0315.hatrho}:
\begin{equation}\label{e.0315.nlp}
    c^* = \begin{cases}
        2 &\text{ when }c_1 \leq 2,\\
        c_{nlp} &\text{ when }2 < c_1 < 2(\sqrt{a} + \sqrt{1-a}),\\
        2\sqrt{1-a} &\text{ when }c_1 \geq 2(\sqrt{a} + \sqrt{1-a}).
    \end{cases}
\end{equation}
In the first and third regimes, the spreading speed is locally selected. In the second regime, however, the path of least action $\gamma \in AC[0, t]$ for a given point at the front is no longer a linear path (see Figure \ref{fig.1}). Indeed, fix a point at the front, i.e. $(t, x) = (t, c_{nlp}t)$, then the trajectory $\gamma$ consists of two linear segments:
\begin{equation}
    \gamma(s) = \begin{cases}
        c_1 s &\text{ for }s \in [0, \tau  t],\,\\
        c_1 \tau t \frac{t-s}{ t(1- \tau)} + x \frac{s-\tau t}{t(1-\tau)} &\text{ for }s \in (\tau t, t],
    \end{cases}
\end{equation}
where $\tau = 1- \frac{c_1 - c_{nlp}}{2\sqrt{a}}$ (note that $\tau \in (0,1)$ in case $2 < c_1 < 2(\sqrt{a} + \sqrt{1-a})$).

\begin{figure}[htbp]
    \centering
    \includegraphics[width=0.55\linewidth]{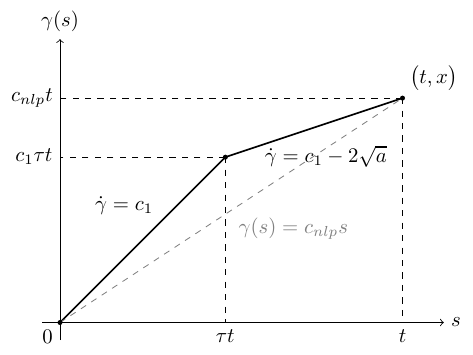}
    \caption{\small A piecewise linear trajectory that first travels ahead of the front $\gamma(s)=c_{nlp}s$ and then relaxes to the front point $(t,c_{nlp}t)$}
    \label{fig.1}
\end{figure}

In this situation, optimal trajectories may travel ahead of the front before
returning, which produces a new mechanism of speed selection known as
\emph{nonlocal pulling}, which was demonstrated first in  \cite{holzer2014accelerated} by considering the linearized problem in the moving frame $c_1$ and then in \cite{girardin2019invasion} when considering the coupled system of two competing species \eqref{e.lv}, where the terminology "nonlocal pulling" was introduced. 

\medskip

\noindent {\bf The case of decreasing environment.} 
For decreasing environmental function (i.e. $r(-\infty) > r(+\infty)>0$), however, our analysis reveals that (see \cite[Theorem 7]{Lam2022asymptotic})
\begin{equation}\label{e.0324.1}
c^* = \begin{cases}
    2\sqrt{r(-\infty)} &\text{ if }c_1 \geq 2\sqrt{r(-\infty)},\\
    c_1 &\text{ if }2\sqrt{r(+\infty)} <c_1 < 2\sqrt{r(-\infty)},\\
    2\sqrt{r(+\infty)} &\text{ if }c_1 \leq 2\sqrt{r(+\infty)}.
\end{cases}
\end{equation}
Hence, the speed is either locally selected (cases (i) and (iii)), or the population invades with the same pace as the shifting environment (the forced wave regime). In all cases, for compactly supported initial data, the spreading speed is always locally selected. This is the case for compactly supported initial data.

\blue{A recent study on the nonlocal diffusion problem by Tao et al. \cite{tao2026complete} revealed a novel nonlocal speed enhancement mechanism. 
As we have seen before, in homogeneous environment the speed $c^* = H(\lambda)/\lambda$ is nonlocally selected for noncompactly supported, exponentially decaying initial data. Here, the optimal path $\gamma$ is straight (thanks to Jensen's inequality) but it does not initiate at the origin (see \eqref{e.725.2}). What they found is that there is a regime in which this path $\gamma$ can sometimes be ``bent" by the decreasing, shifting environment. This happens when the initial data exhibit exponential decay within an appropriate range of decay rates relative to the environmental speed $c_1$. The following result, inspired by \cite{tao2026complete}, demonstrates that the same underlying mechanism also applies to reaction-diffusion equations. 

\medskip

\begin{theorem} (Nonlocal pulling in decreasing environment) 
    Let $u$ be a solution of the PDE \eqref{e.0312.1} with shifting nonlinearity satisfying \eqref{e.0312.2} and  that $r$ is non-increasing with 
$r(-\infty) > r(+\infty) >0$ and has exponentially decaying initial data \eqref{e.bc.0723}. 
    Assume $\lambda \in \big(0,\sqrt{r(-\infty)}\,\big)$ and $c_1 >0$ satisfies
\begin{equation}\label{e.0723.1}
    c_1 > \max\left\{ \frac{H^+(\lambda)}{\lambda}, \frac{H^-(\sqrt{r(-\infty)}) - H^+(\lambda)}{\sqrt{r(-\infty)}-\lambda} \right\}
\end{equation}
where $H^\pm(p) = p^2 + r(\pm \infty)$. 
Then the spreading speed is given by 
\[
c^* = \frac{H^-(\hat\lambda)}{\hat\lambda}
\]
where $\hat\lambda \in \big(0,\sqrt{r(-\infty)}\,\big)$ is the unique root of
\[
\hat\lambda c_1 - H^-(\hat\lambda) = \lambda c_1 - H^+(\lambda).
\]
Furthermore, we observe the nonlocal pulling phenomena when the invasion wave is behind the environmental shift and yet faster than the local KPP speed. In fact, we have 
\[
2\sqrt{r(-\infty)} < c^* < \min\left\{ \frac{H^-(\lambda)}{\lambda} , c_1\right\},
\]
where $2\sqrt{r(-\infty)}$ is the linearly selected speed, $\frac{H^-(\lambda)}{\lambda}$ is the nonlocally selected speed in the homogeneous environment with Hamiltonian $H^-(p)$ and $c_1$ is the speed of environmental shift.
\end{theorem}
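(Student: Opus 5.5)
The plan is to follow the Hamilton--Jacobi route of Section~\ref{s.4} and Section~\ref{s.5}. Set $w^\ep(t,x)=-\ep\log u(t/\ep,x/\ep)$ and take the half-relaxed limits $\overline w,\underline w$ as in \eqref{e.halfrelax}. These are a sub- and a supersolution of
\[
\min\{w,\,w_t+|w_x|^2+R(x/t)\}=0,\qquad R(s)=r(+\infty)\mathbf 1_{\{s>c_1\}}+r(-\infty)\mathbf 1_{\{s<c_1\}},
\]
in Ishii's sense, with a discontinuous Hamiltonian along $x=c_1t$. By \eqref{e.bc.0723} the initial trace is $\overline w(0,x)=\underline w(0,x)=\lambda x_+$, as in the Remark after Step~\#4. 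Since $\inf r=r(+\infty)>0$, the first proof of Theorem~\ref{t.0312.1} (the subsolution $\underline u_1$ with growth rate $r(+\infty)$) gives persistence near $x\le 0$. By the sliding/comparison argument of Step~\#2, now using the comparison principle for discontinuous Hamiltonians \cite[Appendix A]{liu2021asymptotic}, $w^\ep$ then converges locally uniformly to the unique viscosity solution. By Remark~\ref{rmk.0315.2} this solution has the form $w=t\rho(x/t)$, with $\rho$ solving $\min\{\rho,\rho-s\rho'+|\rho'|^2+R(s)\}=0$ and $\rho(s)=\lambda s+O(1)$ as $s\to\infty$. Step~\#4 then converts $\{w=0\}$ into the spreading statement, so $c^*$ is the largest zero of $\rho$.

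The key step is to write down $\rho$ explicitly and check that it is a viscosity solution. Put $g_\pm(p)=pc_1-H^\pm(p)$. These are concave, increasing on $(0,c_1/2)$, and $g_-=g_+-(r(-\infty)-r(+\infty))<g_+$.
\begin{itemize}
\item[(a)] \emph{Existence of $\hat\lambda$.} The first condition in \eqref{e.0723.1} gives $g_+(\lambda)>0>g_-(0)$. The second gives $g_-(\sqrt{r(-\infty)})>g_+(\lambda)$. It also forces $c_1>2\sqrt{r(-\infty)}$, since the difference quotient of $H^-$ between $\lambda$ and $\sqrt{r(-\infty)}$ exceeds the slope $\lambda+\sqrt{r(-\infty)}$ of the chord of $p^2$; so $g_-$ is increasing on $[0,\sqrt{r(-\infty)}]$. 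Hence $\hat\lambda\in(0,\sqrt{r(-\infty)})$ is uniquely defined.
\item[(b)] \emph{Order of the slopes.} From $g_-(\hat\lambda)=g_+(\lambda)>g_-(\lambda)$ we get $\hat\lambda>\lambda$.
\end{itemize}
The candidate is
\[
\rho(s)=\max\{0,\tilde\rho(s)\},\quad
\tilde\rho(s)=\begin{cases}\lambda s-H^+(\lambda), & s\ge c_1,\\ \hat\lambda s-H^-(\hat\lambda), & 2\hat\lambda\le s\le c_1,\\ \tfrac14 s^2-r(-\infty), & s\le 2\hat\lambda.\end{cases}
\]
The defining equation for $\hat\lambda$ gives continuity at $c_1$. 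The last two pieces are $C^1$-matched at $2\hat\lambda$ (a tangent line of the Legendre parabola). Each piece is a classical solution in its region. One should also note that $c_1\ge 2\lambda$, or else handle $s>c_1$ with the homogeneous profile \eqref{e.0315.J}; I expect the given conditions make the linear branch the right one near $c_1^+$. Because $\hat\lambda<\sqrt{r(-\infty)}$, the zero $s_0=H^-(\hat\lambda)/\hat\lambda$ of the middle branch lies to the right of $2\hat\lambda$, and $\tilde\rho(c_1)=g_+(\lambda)>0$ gives $s_0<c_1$. Taking $\max\{\cdot,0\}$ at $s_0$ is standard, as in \eqref{e.0319.1}.

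The main obstacle is the kink at $s=c_1$, where $R$ is discontinuous. There the slope drops from $\hat\lambda$ to $\lambda$, so the kink is concave. No $C^1$ test function touches from below, and only the subsolution test with the lower envelope $R_*=r(+\infty)$ is needed. That is, for every $p\in[\lambda,\hat\lambda]$ we need
\[
\rho(c_1)-c_1p+p^2+r(+\infty)=g_+(\lambda)-g_+(p)\le 0 .
\]
This holds because $g_+$ is concave and $g_+(\hat\lambda)>g_-(\hat\lambda)=g_+(\lambda)$, so $g_+\ge g_+(\lambda)$ on $[\lambda,\hat\lambda]$. I would additionally check that the sub/supersolution notion and the comparison theorem of \cite{liu2021asymptotic} cover the unbounded initial trace $\lambda x_+$. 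If not, I would truncate the initial data, use finite speed of the characteristics, and control the $w=0$ region by the persistence estimate.

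Finally, $c^*=s_0=H^-(\hat\lambda)/\hat\lambda$. The stated inequalities follow directly:
\begin{itemize}
\item $c^*<c_1$, shown above;
\item $c^*>2\sqrt{r(-\infty)}$, since $\hat\lambda<\sqrt{r(-\infty)}$;
\item $c^*<H^-(\lambda)/\lambda$, since $p\mapsto H^-(p)/p$ is decreasing on $(0,\sqrt{r(-\infty)})$ and $\lambda<\hat\lambda$.
\end{itemize}
For the nonlocal pulling claim, the Lagrangian picture should give an optimal path for $(t,c^*t)$ that is broken at the line $x=c_1s$: it starts from the exponential tail in $\{x>c_1t\}$ with velocity $2\lambda$, crosses behind the shift, and continues with velocity $2\hat\lambda$ to reach the front, so it does not stay on the ray $x=c^*s$.
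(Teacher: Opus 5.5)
Your proposal is correct and follows the paper's route. You pass to the Hamilton--Jacobi limit with initial trace $\lambda x_+$, exhibit the explicit self-similar solution, invoke uniqueness, and read off $c^*$ from $\rho(c^*)=0$. Your quadratic branch is negative on $(0,2\hat\lambda]$, so after taking $\max\{0,\cdot\}$ it coincides with the paper's two-piece $\rho$, and your Ishii subsolution check at the concave kink $s=c_1$ fills in a detail the paper leaves implicit.

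One small slip: your chord-slope comparison only yields $c_1>\lambda+\sqrt{r(-\infty)}$, and $c_1>2\sqrt{r(-\infty)}$ genuinely needs both conditions in \eqref{e.0723.1}. If $r(-\infty)-r(+\infty)\le(\sqrt{r(-\infty)}-\lambda)^2$, the first condition gives $c_1>\lambda+r(+\infty)/\lambda\ge 2\sqrt{r(-\infty)}$; otherwise the second does. In any case, uniqueness of $\hat\lambda$ and $\hat\lambda>\lambda$ follow directly from concavity of $g_-$ together with $g_-(0)<g_+(\lambda)<g_-(\sqrt{r(-\infty)})$.
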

\begin{proof}[Sketch of proof.] Since it is known that the rate function $w^\ep$ converges to the unique viscosity solution of 
\begin{equation}\label{hje.0724}
\begin{cases}
\min\left\{w,w_t + |w_x|^2 + r(-\infty)\mathbf{1}_{\{x/t\leq c_1\}} + r(+\infty) \mathbf{1}_{\{x/t> c_1\}}\right\}=0  \quad \text{ for }t>0,x \in \mathbb{R} \\
w_0(x) = \lambda \max\{x,0\},
\end{cases}
\end{equation}
and that the population persists in the region ${\rm Int}\,\{(t,x):~w(t,x) = 0\}$ and is by definition exponentially small in $\{(t,x):~w(t,x) > 0\}$, it suffices to determine the limit rate function $w(t,x)$. By uniqueness, we only need to produce an explicit solution of \eqref{hje.0724}.

\noindent {\bf Step \#1.} Observe that \eqref{e.0723.1} is equivalent to 
\[
0<\rho(c_1)< c_1\sqrt{r(-\infty)}  - H^-(\sqrt{r(-\infty)}).\]

\medskip 

\noindent {\bf Step \#2.} Observe that $\hat\lambda \in (0,\sqrt{r(-\infty)})$ is well defined if \eqref{e.0723.1} holds.

\medskip 

\noindent {\bf Step \#3.} Define  
\[
\rho(s) = \begin{cases}
    \lambda s - H^+(\lambda) &\text{ for }s \geq c_1,\\
    \hat\lambda s - H^-(\hat\lambda) &\text{ for }s < c_1,
\end{cases}
\]
Then $\max\{0,\rho(s)\}$ is the (unique) viscosity solution of
\begin{equation}\label{hje.0724b}
\begin{cases}
\min\left\{\rho,\rho - s\rho' + |\rho'|^2+ r(-\infty)\mathbf{1}_{\{s\leq c_1\}} + r(+\infty) \mathbf{1}_{\{s> c_1\}}\right\}=0  \quad \text{ for }s>0\\
\rho(0)=0, \quad \lim_{s \to \infty}\rho(s)/s = \lambda. 
\end{cases}
\end{equation}
This implies that \(w(t,x) = \max\{0,t\rho(x/t)\}
\) is the unique viscosity solution of \eqref{hje.0724}. From the characterization $\rho(c^*) =0$, we deduce that $c^* = H^-(\hat\lambda)/\hat\lambda$. 
\end{proof}

}

\bigskip

\noindent {\bf Nonlocal selection of speed arising in spreading in higher dimensional, spatially heterogeneous environment.} 
 An interesting example is given in \cite[Propositions 3.13 and 3.14]{Nadin2020}. 
Consider the Fisher-KPP equation on $\mathbb R^2$:
\begin{equation}
    u_t -  \Delta u = a(x_1)u(1-u) \quad \text{ for }t>0,~x=(x_1, x_2) \in \mathbb{R}^2,
\end{equation}
where $u(0,x)=u_0(x)$ is compactly supported and $a:\mathbb{R} \to \mathbb{R}$ is a monotone increasing function such that there exist $0 < a_-<a_+$ such that 
\[
a_- = \lim_{x_1 \to -\infty} a(x_1) \quad \text{  and } \quad a_+= \lim_{x_1 \to +\infty} a(x_1).\] 
Then the limiting rate function $w(t,x)$ is the unique viscosity solution satisfying
\[
\min\{w, w_t + |\nabla_x w|^2 + a_{-}\mathbf{1}_{\{x_1<0\}} + a_+\mathbf{1}_{\{x_1\geq 0\}}\} = 0, \quad t>0, x\in\R^2,
\]
\[
w(0,x_1,x_2)=\begin{cases} 0  \quad &\text{ if }(x_1,x_2)= (0,0),\\
+\infty &\text{ if }(x_1,x_2)\neq (0,0).\end{cases}
\]
Moreover, we have
\[
\{(x_1,x_2):~ w(t,x_1,x_2) = 0\} = t\mathcal{S} \quad \text{ for each }t>0,
\]
with $\mathcal{S}$ being the {\it convex envelope} of
\[
\{x \in \mathbb{R}^2:~ |x|\leq 2\sqrt{a_+},~ x_1 \geq 0\} \cup 
\{x \in \mathbb{R}^2:~ |x|\leq 2\sqrt{a_-},~ x_1 \leq 0\}.
\]
Then for each ${\bf e} \in \mathbb{S}^1$, the directional speed is given by
\[
v({\bf e}):= \sup\{s \geq 0:~ s{\bf e} \in \mathcal{S}\}.
\]
For each direction ${\bf e}\in \mathbb{S}^1$ such that $v({\bf e})  \in (2\sqrt{a_-},2\sqrt{a_+})$, it follows that the speed is nonlocally selected. Heuristically,  to reach as far
as possible in these directions during a given time $t$, an individual has to first go in direction $(0,1)$ at
speed $2\sqrt{a_+}$ and then to get into the left medium at speed $2\sqrt{a_-}$. This change in the direction of the optimal path verifies that these directional speeds are nonlocally selected.

\medskip

\noindent {\bf Generality.} 
By pushing the Hamilton-Jacobi approach further, we establish a general result in \cite{Lam2022asymptotic} for Nicholson’s blowfly model with distributed delay in the birth rate \cite{so2001reaction}. In the case of the reaction-diffusion equation, we considered the Fisher-KPP equation with general spatio-temporal heterogeneous growth rate 
\begin{equation}\label{e.kpp.gen}
    u_t - u_{xx} =  (r(t,x) - u)u.
\end{equation}
To state our assumption in $r(t,x)$, we give some definitions.
\begin{definition}\label{d.1}
    Consider the following half-relaxed limits:
\[
    R(s)=\limsup_{t\to\infty \atop s' \to s} r(t,s't) \qquad \text{ and }\qquad \underline{R}(s) = \liminf_{t\to\infty \atop s' \to s} r(t,s't) 
    \]
\end{definition}
\begin{definition}
We say that $h:\mathbb{R}\to\mathbb{R}$ is \emph{locally monotone} if, for each $s_0$, either
\[
\lim_{\delta\to 0}\,
\inf_{\substack{|s_i-s_0|<\delta\\ s_1<s_2}}
\bigl(h(s_1)-h(s_2)\bigr)\ge 0,
\qquad \text{ 
or }\qquad 
\lim_{\delta\to 0}\,
\sup_{\substack{|s_i-s_0|<\delta\\ s_1<s_2}}
\bigl(h(s_1)-h(s_2)\bigr)\le 0.
\]
\end{definition}
\begin{remark}
Equivalently, $h$ is locally monotone if for any $s \in \mathbb{R}$, the one-sided limits $h(s^\pm)$ exist and that $h(s)$ is bounded between $h(s^+)$ and $h(s^-)$.
\end{remark}
By establishing the necessary comparison principle, we obtained the existence and characterization of spreading speed under the following assumptions:
\begin{description}
 \item[(H1)] $R(s)$ is locally    monotone. 
    \item[(H2)] The functions  $\underline{R}(s)$ 
    satisfy
    \[
    \underline{R}(s) = {R}(s) \quad \text{ a.e. in }(0,\infty),\quad \text{ and }\quad \inf_{s\in(0,\infty)}\underline{R}(s) >0.
    \]

\end{description}
\begin{theorem}[\cite{Lam2022asymptotic}]
Let $u$ be a solution to \eqref{e.kpp.gen}  with compactly supported initial data and assume that the half-relaxed limits of $r(t,x)$ satisfy conditions \textup{\bf(H1)-(H2)}. 
Then
as $\ep \to 0$, $w^\ep(t,x)$ converges to the limit $t \hat\rho(x/t)$ in $C_{loc}((0,\infty)\times \mathbb{R})$ where $\hat\rho(s)$ is the unique viscosity solution\footnote{Here we mean viscosity solution in the sense of Ishii \cite{ishii1985hamilton}.} of
\begin{equation}\label{hje.0722}
\begin{cases}
\min\{ \rho, \rho - s \rho' + |\rho'|^2 + R(s) \}= 0 \quad \text{ for }s \in (0,\infty),\\
    \rho(0) = 0, \quad \lim\limits_{s \to \infty} \rho(s)/s = \infty.
\end{cases}
\end{equation}
Moreover, there exists $c^*>0$ such that \[\{s\geq0:~\hat \rho(s)=0\} = [0,c^*],\] and we have
\begin{equation}\label{e.0316.1}
\lim_{\ep \to 0} u^\ep =
\begin{cases}
    0 & \text{ uniformly on compact subsets of }\{(t,x):~ x > c^* t\},\\
    \geq \inf \underline{R} & \text{ uniformly on compact subsets of }\{(t,x):~ 0\leq x < c^* t\}.
\end{cases}
\end{equation}
\end{theorem}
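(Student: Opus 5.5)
We follow the scheme of the second proof of Theorem \ref{t.0312.1}, now with the discontinuous, self-similar limiting Hamiltonian $H(x/t,p)=|p|^2+R(x/t)$. The argument has four parts: uniform bounds on $w^\ep$, the half-relaxed limits, a comparison principle that identifies them, and recovery of the spreading statement \eqref{e.0316.1}.

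\textbf{Bounds and half-relaxed limits.} Since $r$ is bounded, local bounds $-\ep C_K\le w^\ep\le C_K$ hold on compacts $K\subset(0,\infty)\times\mathbb{R}$. These give the half-relaxed limits $\overline w\ge\underline w\ge 0$ as in \eqref{e.halfrelax}.

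Both limits inherit the hyperbolic scaling: they are of the form $t\overline\rho(x/t)$ and $t\underline\rho(x/t)$. To see this, note that the family $w^\ep$ under $(t,x)\mapsto(kt,kx)$ corresponds to replacing $\ep$ by $\ep/k$, which leaves the half-relaxed limits unchanged.

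By the standard Barles--Perthame stability argument in Ishii's sense, $\overline w$ is a subsolution and $\underline w$ is a supersolution of
\[
\min\{w,w_t+|w_x|^2+h(x/t)\}=0 .
\]
For the subsolution the upper envelope is $h=R$; for the supersolution the lower envelope is $h=\underline R$. Because $\underline R=R$ a.e. and $R$ is locally monotone (both from (H1)--(H2)), these envelopes coincide up to a null set. Remark \ref{rmk.0315.2} then converts the problem to $\overline\rho,\underline\rho$ solving \eqref{hje.0722}, in the sub- and supersolution sense respectively.

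The boundary data are handled as follows:
\begin{itemize}
\item[{\rm(i)}] $\rho(0)=0$. Here $\inf\underline R>0$ lets us compare $u$ with a compactly supported subsolution $\delta\sin(x/L)$ on a time-independent domain, with $\delta,L$ chosen small, giving $\overline w(t,0)=0$.
\item[{\rm(ii)}] $\rho(s)/s\to\infty$. This follows from compact support of $u_0$ via the explicit Gaussian supersolution $e^{-\lambda(x-A)+(\lambda^2+\sup r)t}$, optimized over $\lambda$, which gives $\underline w(t,x)\ge x^2/4t-Ct$.
\end{itemize}

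\textbf{Comparison principle (main obstacle).} The key step is to show $\overline\rho\le\underline\rho$ on $(0,\infty)$, where the Hamiltonian is discontinuous in $s$. Local monotonicity is what saves us: near each $s_0$, $R$ is either nonincreasing or nondecreasing.

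I would prove comparison by the Ishii/Tourin doubling of variables with a shifted penalty. Take a maximum point of $\overline\rho(s)-\underline\rho(\sigma)-|s-\sigma|^2/2\alpha$ and impose $\sigma=s\pm\alpha^{1/2}$-type shifts, the sign chosen according to the direction of monotonicity of $R$ near the limit point. This ensures that $R$ evaluated at the supersolution point dominates $R$ at the subsolution point up to $o(1)$.

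Coercivity of $p\mapsto|p|^2$ makes the subsolution locally Lipschitz, which controls the shift error. The term $\rho$ in the equation supplies strict monotonicity in the zeroth-order variable, so no extra perturbation is needed away from the obstacle $\{\rho=0\}$.

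The boundary conditions handle the ends. The growth at infinity, together with the linear-in-$s$ growth of the subsolution obtained from its Lipschitz bound via coercivity, localizes the maximum, and $\rho(0)=0$ handles $s=0$. Hence $\overline\rho=\underline\rho=:\hat\rho$, and $w^\ep\to t\hat\rho(x/t)$ in $C_{loc}$.

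\textbf{Spreading speed.} The zero set of $\hat\rho$ is an interval $[0,c^*]$. To see this, note that wherever $\hat\rho>0$ it solves $\hat\rho-s\hat\rho'+|\hat\rho'|^2+R=0$ and is convex in $s$ (it is a supremum of affine functions via the Lagrangian/Legendre representation). Moreover, $\hat\rho(s)\ge0$ and $\hat\rho(s)\to\infty$, so the positive set is a half-line. The value $c^*$ is finite because $\hat\rho(s)\ge s^2/4-\sup R$, and positive because $\hat\rho=0$ on $[0,2\sqrt{\inf\underline R})$ by comparison with the homogeneous problem.

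Finally, \eqref{e.0316.1} follows. Vanishing on $\{\hat\rho>0\}$ is immediate from $w>0$ there. The lower bound by $\inf\underline R$ in ${\rm Int}\{w=0\}$ follows exactly as in Step \#4 of the second proof of Theorem \ref{t.0312.1}: at the perturbed maximum point $(t_\ep,x_\ep)$, the viscosity inequality gives $u^\ep\ge r(t_\ep/\ep,x_\ep/\ep)+o(1)\ge\inf\underline R+o(1)$.
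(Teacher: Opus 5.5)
Your outline (half-relaxed limits, Ishii sub/supersolutions, comparison for the reduced problem, then Step \#4 for the lower bound) follows the paper's scheme. However, the comparison step, which you rightly call the main obstacle, does not go through as written, in two places.

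\textbf{First gap: the envelopes are reversed and (H2) is never actually used.} At a maximum point of $w^\ep-\phi$, the equation for $w^\ep$ gives $\phi_t-\ep\phi_{xx}+|\phi_x|^2+r(t_\ep/\ep,x_\ep/\ep)\le u^\ep(t_\ep,x_\ep)$. In the limit only $\liminf r\ge\underline R$ survives. So $\overline w$ is a subsolution only with the \emph{lower} envelope $\underline R$, and $\underline w$ is a supersolution with the \emph{upper} envelope $R$.

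Viscosity inequalities are pointwise, so ``$\underline R=R$ a.e.'' upgrades nothing by itself. In Example 4 (e.g.\ $h=\delta\mathbf{1}_{\{s_0\}}$ with $R_1$ continuous) one has $\underline R(s_0)<R(s_0)$ at a continuity point of $R$. If the subsolution point of the doubled maximum sits at $s_0$, no choice of shift makes $R(\sigma_\alpha)-\underline R(s_\alpha)\le o(1)$.

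The missing step is as follows. $\overline\rho$ is Lipschitz by coercivity, so its inequality holds at a.e.\ point, hence with $R$ a.e.\ by (H2). By convexity of $p\mapsto |p|^2-sp$ it then passes to every superdifferential, so $\overline\rho$ is an Ishii subsolution with the lower semicontinuous envelope $R_*$. Only after this can (H1) close the doubling argument.

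Even then, the shift must make $R$ at the \emph{supersolution} point at most $R_*$ at the \emph{subsolution} point plus $o(1)$. For $R$ nondecreasing near $s_0$, this means placing $\sigma_\alpha$ to the left of $s_\alpha$. You state the opposite domination, but $R$ enters the equation with a plus sign.

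\textbf{Second gap: the localization at $s=\infty$ rests on a false claim.} Coercivity gives $0<\overline\rho'<s$, and since $\max_p(sp-p^2)=s^2/4$, in fact $\overline\rho\le\max\{0,\,s^2/4-\inf\underline R\}$. Subsolutions therefore grow quadratically, not linearly; in the homogeneous case $\max\{0,s^2/4-1\}$ is one. This is the same leading order as your bound $\underline\rho\ge s^2/4-C$. Thus $\overline\rho-\underline\rho$ is only bounded above, its supremum may be approached only as $s\to\infty$, and the condition $\rho(s)/s\to\infty$ localizes nothing.

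This is the reduced-variable form of the singular initial trace at $t=0$. The paper handles it in Step \#2 by sliding along the interface, $\overline w(t+\eta,x+c_1\eta)$. For a general $R(x/t)$ such a shift no longer preserves the Hamiltonian, so you need a substitute.

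One substitute that works is to set $g=\rho-s^2/4$. This turns the equation into $\min\{g+s^2/4,\ g+|g'|^2+R(s)\}=0$. Then $\overline g-\underline g\le C$ and $|\overline g'|\le s/2$. Penalizing with $\delta\log(1+s)$ forces a maximum while perturbing the Hamiltonian by only $O(\delta)$.

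\textbf{Smaller points.} You do not need convexity of $\hat\rho$, and you have not justified it: $\hat\rho$ is an infimum over paths, and concave kinks are compatible with the equation (cf.\ the decreasing-environment theorem in Section~\ref{s.5}). The zero set is still $[0,c^*]$, because a positive maximum $\bar s$ of $\hat\rho$ on a bounded positivity component would give $\hat\rho(\bar s)+\underline R(\bar s)\le 0$. Also, the subsolution $\delta\sin(x/L)$ requires $L$ large, not small.
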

\begin{remark}
\blue{
The assumption that $\inf \underline{R}>0$ can be relaxed. Indeed, we can go through Step \#4 in the proof contained in Section \ref{s.4} and show that 
\[
\lim_{\ep \to 0} u^\ep =0  \quad \text{ uniformly on compact subsets of }\{(t,x):~ x > c^* t\}\]
and for each $\delta>0$, we have
\[
\liminf_{\ep \to 0\atop (t',x')\to (t,x) } u^\ep(t',x')  
\geq \delta,
\]
uniformly for $(t,x)$ on compact subsets of $\{(t,x):~ 0\leq x < c^* t,~ \underline{R}(\tfrac{x}{t}) \geq \delta\}$.
In particular, 
\[
\lim_{n \to \infty} u(t_n,x_n) >0
\]
for any sequence $(t_n,x_n)$ such that $t_n \to \infty$ and $ \underline{R}(\lim\limits_{n\to\infty}\frac{x_n}{t_n})>0$.} 
\end{remark}

We present several classes of $r(t,x)$ which can be handled by the theoretical results in \cite{Lam2022asymptotic}.

\medskip

\noindent {\bf Example 1.} 
    (Non-monotone environments) This corresponds to $r(t,x) = r(x-c_1t)$ such that $r(\pm \infty)$ exist. Then the monotonicity requirement can be relaxed to
    \[
    \sup_{\mathbb{R}} r \leq \max\{ r(\pm \infty)\},
    \]
    and one can derive the exact same Hamilton-Jacobi equation. 
 In particular, when  $r(-\infty) =1-a$ and $r(+\infty)=1$, then the spreading speed is still given by \eqref{e.0315.nlp}.
\medskip

\noindent {\bf Example 2.} 
    (Asymptotically homogeneous environments) This corresponds to $r(t,x)$ for which there is a constant $r_0>0$ such that the half-relaxed limits $R(s)$ and $\underline{R}(s)$ in Definition \ref{d.1} satisfy \[R(s) = r_0 \quad \text{ for all }s\geq 0 \quad\text{ and } \underline{R}(s) = r_0\quad \text{ for almost every }s \geq 0,\] in which case 
\[
c^* = 2\sqrt{r_0}.
\]
\medskip

\noindent {\bf Example 3.} 
Consider the class of environments where \[
r(t,x) = \sum_{j=1}^m r_j(x - c_j t)
\]
where for each $j$,  $r_j$ is a monotone (increasing or decreasing), bounded, positive function. If all $c_j$ are distinct, then the resulting $R(s)$ will be piecewise constant and satisfies {\bf (H1)} and {\bf (H2)}.
See also \cite{Lam2022asymptotic,YiZou2025} for a Nicholson blowfly equation with multiple shifting speeds. In general, we can allow any continuous or monotone $R(s)$ in our analysis.

\medskip

\noindent {\bf Example 4.}
Let $R_1: \mathbb{R} \to (0,\infty)$ be an upper semicontinuous function such that $\inf R_1>\delta$ (for some $\delta>0$) and  {\bf (H1)} holds, then let $r(t,x)$ be an environmental function satisfying
\[
r(t,x) = R_1(x/t) - h(x/t) \quad \text{ for }t>0,
\]
such that \(0 \leq h(s) \leq \delta \) and $h$ is supported on a set of measure zero in $\mathbb{R}$. 
Then {\bf (H1)-(H2)} are verified. 
\medskip

\begin{remark}
    The viscosity solution framework is extremely flexible and is amenable to the treatment of different kinds of propagation dynamics, for which we focus on some of the simplest examples in this article. Besides  equations with  temporal delay in \cite{Lam2022asymptotic},  see \cite{Nadin2020} in relation to homogenizations for  reaction-diffusion equations in $\mathbb{R}^n$;   \cite{tao2026complete} for nonlocal diffusion on $\mathbb{R}$; and \cite{liang2020spreading} for diffusion on lattices.
\end{remark}

\begin{problem}
Suppose $r(t,x)$ satisfies {\bf(H1)-(H2)} and assume in addition that $r_+:= r(t,+\infty)$ (resp. $r_- = r(t,-\infty)$) exists for each $t$ and is a positive constant independent of $t$. Characterize the set of entire solutions $U(t,x)$ such that  
\[
\lim_{x\to - \infty} U(t,x) = r_- \quad \text{ and }\quad \lim_{x \to +\infty} U(t,x) =0  \qquad \text{ for all }t \in \mathbb{R}.
\]
\end{problem}

\begin{problem}
Consider the heterogeneous Fisher-KPP problem \eqref{e.kpp.gen} with $r(t,x)$ satisfying
\[
\begin{cases}
g_1(x-c_1t)\leq r(t,x) \leq g_2(x-c_1 t)\\
\lim\limits_{x \to -\infty} |r(t,x) - g_1(x-c_1t)| = 0 =\lim\limits_{x \to +\infty} |r(t,x) - g_2(x-c_1t)|,  
\end{cases}
\]
where $g_i$ are positive, periodic functions. Derive the limiting Hamilton-Jacobi equation and characterize the spreading speed.
\end{problem}

\noindent {\bf Related Literature.} 
The Hamilton–Jacobi approach to propagation problems was originally developed by Freidlin and G\"{a}rtner \cite{gartner1979propagation,Freidlin1985limit,freidlin1986geometric} using probabilistic large deviation techniques. It was later reformulated using PDE arguments based on the concept of viscosity solutions by Evans and Souganidis \cite{Evans1989pde}. This methodology has since been extended to treat more general settings, including equations with periodic space–time coefficients \cite{majda1994large} as well as stochastic ergodic environments \cite{souganidis1999stochastic,lions2010stochastic}. More recently, this approach has been applied to age-structured population model \cite{kang2025global}, neural field equation 
\cite{tao2025hamilton}, 
the reactive-telegraph equation 
\cite{henderson2017reactive}, 
 integro-differential equations \cite{bouin2018thin,brandle2009large,brandle2013large,perthame2005front}, 
problems with fractional laplacian \cite{meleard2015singular,souganidis2019front}, and the
 cane toad equations \cite{bouin2017super,calvez2022non}.











\section{Nonlocal Pulling in  Shifting Habitats with Non-Monotone Environment}\label{s.6}

Consider the Fisher--KPP equation in a shifting heterogeneous environment given in \eqref{e.0312.2}, in which the growth rate of the population is given by $r(x-c_1t)$, 
where $c_1$ represents the speed of the shifting habitat and $r$ is a bounded
positive function describing the spatial heterogeneity of the environment.


In view of the conditions {\bf(H1)-(H2)}, the Hamilton--Jacobi approach of \cite{Lam2022asymptotic} could be applied under the
assumption
\begin{equation}\label{e.0315.5}
    0 < \inf_{\mathbb{R}} r \leq \sup_{\mathbb{R}} r \leq \max\{r(-\infty),r(+\infty)\}.
    \end{equation}
While the first inequality $\inf_{\mathbb{R}} r>0$ can sometimes be relaxed to $\inf_{x > c_0 t} r>0$ for some $c_0>0$  as presented in Section \ref{s.4}, the last inequality 
\(\sup_{\mathbb{R}} r \leq \max\{r(-\infty),r(+\infty)\}\) is crucial for uniqueness of the limiting Hamilton-Jacobi equation. 

Indeed, when this condition
fails (i.e. \(\sup_{\mathbb{R}} r >\max\{r(-\infty),r(+\infty)\}\)), the invasion may be influenced by localized regions of higher growth,
and the corresponding Hamilton--Jacobi problem may admit multiple classical viscosity
solutions in the sense of Ishii  \cite{barles2024modern,giga2013hamilton}.

In this section, we discuss the treatment of this general case in \cite{lam2025asymptotic} based on a different notion of viscosity solutions. 
An alternative treatment, based on the direct construction of super/subsolutions to the parabolic problem, can be found in \cite{GirardinGilettiMatano2024} .

The key contribution of \cite{lam2025asymptotic} is to derive a Hamilton-Jacobi equation with an additional junction condition, and show that
the limiting rate function $w$ can be uniquely identified by the notion of \emph{flux-limited solutions} of such HJE with a junction condition, following the
framework of Imbert and Monneau for Hamilton--Jacobi equations with junction
conditions \cite{Imbert2017flux,Imbert2017quasi}.  In this setting, the thin-front limit of the rescaled rate function
\[
w_\varepsilon(t,x)=-\varepsilon \log u_\varepsilon(t,x),
\qquad
u_\varepsilon(t,x)=u\!\left(\frac{t}{\varepsilon},\frac{x}{\varepsilon}\right),
\]
is shown to converge to a function of the form\footnote{See Remark \ref{rmk.0315.2}.}
\[
w(t,x)=t\,\hat{\rho}(x/t),
\]
where $\hat{\rho}$ solves a Hamilton--Jacobi equation with an extra junction condition
at the environmental speed $s=c_1$. More precisely, the limiting profile $\hat\rho$ satisfies
\begin{equation}\label{e.hje.fl}
\begin{cases}
\min\{\rho,\rho - s\rho' +H(s,\rho')\}=0 \qquad  \text{ for }s \in (0,c_1) \cup (c_1,\infty),\\[2mm]
\min\{\rho(c_1),\rho(c_1)+\max\{A,F(\rho'(c_1+),\rho'(c_1-))\}\}=0,
\end{cases}
\end{equation}
where $H(s,p) =|p|^2 + R(s)$ is as before, $F(p,\tilde p)$ is nonincreasing in $p$ and nondecreasing in $\tilde p$. Particularly, the junction condition\footnote{The function $F$ in the junction condition depends on the increasing and decreasing part of the Hamiltonian $H$, as well as the flux limiter $A$. See \cite{lam2025asymptotic} for details.} involves a \emph{flux-limiter} incorporating the
influence of the spatial heterogeneity on the spreading dynamics:
\[
\text{(Flux-limiter)}  \qquad \qquad \qquad 
A=\Lambda_1-\frac{c_1^2}{4},\qquad \qquad \qquad \qquad 
\]
and $\Lambda_1$ denotes the generalized principal eigenvalue associated with
the environmental profile $r$:
\[
\Phi''(y) + r(y) \Phi(y) = \Lambda_1 \Phi(y) \quad \text{ for }y \in \mathbb{R}
\]
which is defined by\footnote{This and several other notions of principal eigenvalues are analyzed in \cite{berestycki2015generalizations}.} 
\[
\Lambda_1 := \Lambda_1(r)
= \inf \left\{ \Lambda \in \mathbb{R} :
\exists\, \phi \in C^2_{\mathrm{loc}}(\mathbb{R}),\ \phi>0,\ 
\phi'' + r(y)\phi \le \Lambda \phi \ \text{in } \mathbb{R}
\right\}.
\]
The major difference with viscosity solution is the use of test function that is only piecewise $C^1$ across the junction $s=c_1$:
\[
C^1_{pw}=\{\psi \in C((0,\infty)):~  C^1((0,c_1]) \cap C^1([c_1,\infty)).
\]
so that at the junction, the test function in the verification of junction condition can acquire different directional derivatives at $s=c_1$ as appear in the equation \eqref{e.hje.fl}.

\blue{The flux-limited viscosity solution admits an optimal control interpretation as
\[
w(t,x)=\max\{0,\inf_{\substack{\gamma(0)=0\\ \gamma(t)=x}}
\int_0^t \mathcal{L}\bigl(s,\gamma(s),\dot\gamma(s)\bigr)\,ds\},
\]
where the Lagrangian depends on the location of the trajectory. Away from the
interface, one has the standard running cost
\[
\mathcal{L}(t,x,v)=\frac{|v|^2}{4}-r,
\]
with $r=1$ in the low-resistance region and $r=1-a$ in the high-resistance
region. The essential new feature introduced by the interface is that the environment
is discontinuous. Consequently, trajectories crossing or moving along the
interface cannot be described solely by the bulk Lagrangians. Instead, the
interface carries a reduction in cost determined by the flux limiter. In the
present problem, this interface contribution is represented by the constant
$\Lambda_1$, yielding the effective Lagrangian 
\[
\mathcal{L}(t,x,v)
=
\frac{|v|^2}{4}
-
\Bigl[
(1-a)\mathbf{1}_{\{x/t<c_1\}}
+
\Lambda_1\mathbf{1}_{\{x/t=c_1\}}
+
\mathbf{1}_{\{x/t>c_1\}}
\Bigr].
\]
Thus, the interface acts as a distinct propagation medium whose effective cost
is determined by the flux limiter $\Lambda_1-\frac{c^2_1}{4}$.
Note that when $\Lambda_1 \leq 1$, then the effective Lagrangian reduce to the original case shown in \eqref{e.lagrange1}.
}

As before, the spreading speed is characterized by the free boundary point of the
rate function,
\[
c^*=\sup\{s\ge0:\hat{\rho}(s)=0\}.
\]
Thus the quantities $r(+\infty)$, $r(-\infty)$, $c_1$, and the eigenvalue
$\Lambda_1$ together determine the invasion speed. The following result is to be compared with \eqref{e.0315.nlp}.

\begin{theorem}[{\cite[Theorem 3.2]{lam2025asymptotic}}]
Let $u$ be a solution of \eqref{e.0312.1} with compactly supported, nonnegative,
nontrivial initial data.
\blue{Assume \eqref{e.0312.2} holds for some bounded function \(r:\mathbb{R} \to (0,\infty)\) such that $0<\min\{r(-\infty),r(+\infty)\}$. }
Then the rate function $w^\ep$ converges in $C_{loc}((0,\infty)\times [0,\infty))$ to $t \hat\rho(x/t)$, where $\hat\rho$ is the unique {\it flux-limited} solution to \eqref{e.hje.fl}. Moreover, 
\[
\{s \geq 0:~ \hat\rho(s) = 0\} = [0,c^*],
\]
so that $c^*$ is the spreading speed, i.e. \eqref{e.0316.1} holds.

Furthermore, the free boundary point $c^* = c^*(r(\pm \infty),\Lambda_1)>0$ is given by (denoting $r(\pm \infty) = r_{\pm}$)
\[
c^*=
\begin{cases}
2\sqrt{r_+}, 
& \text{if } c_1 \le 2\sqrt{r_+},\\[6pt]

c_1, 
& \text{if } 2\sqrt{r_+} < c_1 \le 2\sqrt{\Lambda_1},\\[6pt]

\dfrac{c_1}{2}-\sqrt{\Lambda_1-r_-}
+\dfrac{r_-}{\dfrac{c_1}{2}-\sqrt{\Lambda_1-r_-}},
& \text{if } 2\sqrt{\Lambda_1} < c_1 \le
2\bigl(\sqrt{r_-}+\sqrt{\Lambda_1-r_-}\bigr),\\[10pt]

2\sqrt{r_-},
& \text{if } c_1 >
2\bigl(\sqrt{r_-}+\sqrt{\Lambda_1-r_-}\bigr).
\end{cases}
\]
\label{thm6.1}
\end{theorem}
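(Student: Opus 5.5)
The proof has two parts. First, show that the rate function $w^\ep$ converges to $t\hat\rho(x/t)$, with $\hat\rho$ the flux-limited solution of \eqref{e.hje.fl}. Second, compute the zero set of $\hat\rho$ explicitly in each regime of $c_1$.

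\textbf{Convergence.} I will follow the scheme of Section \ref{s.4}. The local bounds of Step \#1 give finite half-relaxed limits $\overline w,\underline w$. Since these are invariant under the hyperbolic scaling, Remark \ref{rmk.0315.2} lets me write them as $t\overline\rho(x/t)$ and $t\underline\rho(x/t)$. Away from $s=c_1$, the equations in \eqref{e.hje.fl} follow from the classical half-relaxed limit argument, with $R(s)=r_+$ for $s>c_1$ and $R(s)=r_-$ for $s<c_1$.

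The new ingredient is the junction condition at $s=c_1$. To obtain it, I will use the perturbed test function method of Imbert--Monneau. The perturbation is built from the generalized principal eigenfunction $\Phi$ of $\Phi''+r\Phi=\Lambda_1\Phi$. In the moving frame $y=x-c_1t$, the exponential $e^{-c_1 y/2}\Phi(y)$ produces exactly the growth rate $\Lambda_1-c_1^2/4=A$. Inserting $\ep\log$ of this corrector into a piecewise $C^1$ test function should give $\rho(c_1)+A\ge0$ on the supersolution side. Combined with the one-sided derivative tests, this yields the $\max\{A,F\}$ condition on both the subsolution and supersolution sides.

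Uniqueness, and hence $\overline\rho=\underline\rho$, then comes from the comparison principle for flux-limited solutions. To handle the singular initial data, I will again use the sliding argument of Section \ref{s.4}. The boundary conditions $\rho(0)=0$ and $\rho(s)/s\to\infty$ are obtained as in Remark \ref{rmk.0315.2}, using $\min r_\pm>0$ and the compact support of the initial data. The lower bound on $u^\ep$ in the interior of $\{w=0\}$ is Step \#4 verbatim.

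\textbf{Explicit formula.} Uniqueness reduces the problem to exhibiting a flux-limited solution in each regime. I will use the control formulation with the effective Lagrangian, whose interface reward is $\Lambda_1$ (no reward if $\Lambda_1\le r_+$). For the endpoint $(1,s)$ I optimize over paths of three kinds:
\begin{itemize}
\item[(a)] straight paths entirely in one region, with cost $s^2/4-r_\pm$;
\item[(b)] paths running along the interface until time $\tau$ and then moving straight;
\item[(c)] paths that stay on the interface for all time.
\end{itemize}

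Case (c) gives cost $c_1^2/4-\Lambda_1$, which is non-positive exactly when $c_1\le2\sqrt{\Lambda_1}$. This produces the forced speed $c^*=c_1$ whenever $c_1>2\sqrt{r_+}$.

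When $c_1>2\sqrt{\Lambda_1}$, the interface is a cost $c_1^2/4-\Lambda_1>0$ per unit time. I then optimize the leaving time $\tau$ and the slope toward $s<c_1$ in the $r_-$ region. The resulting $\rho$ is affine, $\lambda(s-c)$, on the range where the tangent leaves the interface. Matching gives $\lambda=c_1/2-\sqrt{\Lambda_1-r_-}$, by the same tangent computation that produced $\lambda_{nlp}$ with $a$ replaced by $\Lambda_1-r_-$. Setting $\rho=0$ yields $c^*=\lambda+r_-/\lambda$. This lies above $2\sqrt{r_-}$ exactly when $\lambda>\sqrt{r_-}$, i.e. $c_1<2(\sqrt{r_-}+\sqrt{\Lambda_1-r_-})$; otherwise the local speed $2\sqrt{r_-}$ wins. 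For $c_1\le2\sqrt{r_+}$ the solution is \eqref{e.0319.1} and $c^*=2\sqrt{r_+}$.

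In each regime I will write down $\hat\rho$ piecewise in the form \eqref{e.0315.hatrho}. I then check that it satisfies the bulk equations and the junction condition, with one-sided slopes computed from the formula. Checking that $\{\hat\rho=0\}=[0,c^*]$ is then immediate.

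\textbf{Main obstacle.} I expect the hardest step to be deriving the junction condition with flux limiter exactly $A=\Lambda_1-c_1^2/4$. The upper bound $A$ is supplied by a supersolution built from the eigenfunction. The reverse inequality requires approximating $\Lambda_1$ on large intervals $[-L,L]$ by Dirichlet eigenvalues and building subsolutions concentrated near the interface, as in the sine subsolution of the first proof. I would first try this truncation, letting $L\to\infty$ after $\ep\to0$.
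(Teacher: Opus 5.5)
Your convergence argument follows the route described in Section~\ref{s.6}. That route consists of $1$-homogeneous half-relaxed limits, bulk equations with $R=r_\pm$ off $s=c_1$, and a junction condition obtained from perturbed test functions built on $\chi(z)=-\log\bigl(e^{-c_1z/2}\Phi(z)\bigr)$. This corrector satisfies $-\chi''-c_1\chi'+|\chi'|^2+r=A$. When $\Lambda_1>\max\{r_+,r_-\}$ it has slope $\frac{c_1}{2}-\sqrt{\Lambda_1-r_-}$ at $-\infty$ and $\frac{c_1}{2}+\sqrt{\Lambda_1-r_+}$ at $+\infty$. The route then uses Dirichlet truncation for the reverse inequality, Imbert--Monneau comparison, and Step~\#4. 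The sliding of Section~\ref{s.4} needs an $O(\eta)$ correction: shifting along $x=c_1t$ no longer preserves the datum at $x=0$ when $\hat\rho(c_1)>0$. The genuine gap is in the explicit formula.

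The representation $\max\{0,\inf_\gamma\int_0^t\mathcal L\}$, taken literally, cannot be used for the obstacle problem once the interface cost $\frac{c_1^2}{4}-\Lambda_1$ is negative. In that formula, credit earned on the interface is carried into the $r_+$ region; in $\min\{w,\cdot\}=0$ it is lost as soon as $w$ reaches $0$. Take your class (b) with exit to the right. Its cost is $\tau\bigl(\frac{c_1^2}{4}-\Lambda_1\bigr)+\frac{(s-c_1\tau)^2}{4(1-\tau)}-(1-\tau)r_+$. For $r_+=1$ and $\Lambda_1=4$, this equals $-1.05$ at $(c_1,s,\tau)=(3,3.2,0.9)$ and $-0.375$ at $(c_1,s,\tau)=(1,2.5,\frac12)$. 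So whenever $\Lambda_1>r_+$, the method as you state it gives $c^*>c_1$ in the forced regime and $c^*>2\sqrt{r_+}$ in the first regime. Case (c) only yields the lower bound $c^*\ge c_1$.

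There are two ways to repair this. One is to use the reset cost $\inf_\gamma\sup_{\sigma\in[0,t]}\int_\sigma^t\mathcal L$. The other is a direct PDE argument. On $(c_1,\infty)$ the equation $\rho=s\rho'-\rho'^2-r_+$ is of Clairaut type. Hence at a free boundary $s_0>2\sqrt{r_+}$ the right slope must be a root of $p^2-s_0p+r_+=0$. The larger root fails the supersolution test at the corner. The smaller root $p_-$ keeps $\hat\rho$ below the line $p_-(s-s_0)$, since only concave corners are admissible, and this contradicts $\hat\rho(s)/s\to\infty$.

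For the same reason, in the last three regimes the right branch is not $\frac{s^2}{4}-r_+$ as in \eqref{e.0315.hatrho} whenever $\Lambda_1>r_+$. This is always the case in the forced regime. Using $\frac{s^2}{4}-r_+$ there would make $\hat\rho$ jump at $c_1$. The junction condition forces $\hat\rho(c_1)=0$ when $A\ge0$, and $\hat\rho(c_1)=-A=\frac{c_1^2}{4}-\Lambda_1$ when $c_1>2\sqrt{\Lambda_1}$.

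The correct right branches are as follows, each continued by $\frac{s^2}{4}-r_+$ beyond the stated interval:
\begin{itemize}
\item[(i)] In the forced regime, $\mu(s-c_1)$ on $[c_1,2\mu]$, with $\mu=\frac{c_1}{2}+\sqrt{\frac{c_1^2}{4}-r_+}$.
\item[(ii)] When $c_1>2\sqrt{\Lambda_1}$, $\frac{c_1^2}{4}-\Lambda_1+\nu(s-c_1)$ on $[c_1,2\nu]$, with $\nu=\frac{c_1}{2}+\sqrt{\Lambda_1-r_+}$. Note that $\nu$ is exactly the corrector slope at $+\infty$.
\end{itemize}
With these candidates the direct verification and uniqueness argument goes through.

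Finally, there is a sign slip in your regime criterion. We have $\lambda+r_-/\lambda>2\sqrt{r_-}$ for every $\lambda\neq\sqrt{r_-}$. The affine piece determines $c^*$ exactly when $\lambda<\sqrt{r_-}$, that is, when the tangency point $2\lambda$ lies left of $2\sqrt{r_-}$. It is this condition, not $\lambda>\sqrt{r_-}$, that is equivalent to $c_1<2\bigl(\sqrt{r_-}+\sqrt{\Lambda_1-r_-}\bigr)$.
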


\begin{remark}
(i) The invasion front is nonlocally selected only when \[2\sqrt{\Lambda_1} < c_1 < 2(\sqrt{r_-} + \sqrt{\Lambda_1 - \blue{r_-}})\] and is otherwise locally selected. 

\noindent (ii) The invasion front is a pulled front, except in the case \[2\sqrt{r_+} < c_1 < 2\sqrt{\Lambda_1}\] in which it \red{contains} a pushed front. 
\end{remark}

\blue{A key outcome} of this work is that it unifies several previously
observed spreading phenomena within a single Hamilton--Jacobi framework.
Depending on the relation between these parameters, one may observe
\begin{itemize}
\item classical KPP spreading with a locally selected speed $2\sqrt{r_{\pm}}$,
\item nonlocally pulled fronts influenced by distant favorable habitats,
\item or forced traveling waves whose speed coincides with the habitat
shift $c_1$.
\end{itemize}

Another important feature of the approach is that it naturally extends to
environments with multiple shifting speeds. In that case, the limiting
Hamilton--Jacobi equation contains multiple junction conditions, each
associated with a different environmental interface.

These results provide a unified description of invasion dynamics in
heterogeneous shifting environments and clarify the transition between
locally pulled fronts, nonlocal pulling, and forced wave propagation.

When $c^*=c_1$, the existence and global attractivity of forced wave is established by  Berestycki and Fang {\cite[Theorem 1.5(iii)]{berestycki2018forced}}:
\begin{theorem}
When $2\sqrt{r_+} < c_1 < 2\sqrt{\Lambda_1}$, there exists a minimal forced traveling wave solution and that it attracts all solutions with compactly supported initial data. In particular, $c^* = c_1$.
\end{theorem}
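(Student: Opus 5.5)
The plan is to split the statement into two parts: existence of a minimal forced wave, and its global attractivity for compactly supported data. The identity $c^*=c_1$ will then follow by comparison. We work in the moving frame $\xi=x-c_1t$, where \eqref{e.0312.1} becomes the autonomous equation
\[
v_t = v_{\xi\xi}+c_1 v_\xi + (r(\xi)-v)v .
\]
A forced wave is a positive steady state $U$ of this equation with $U(-\infty)=0$ and $U(+\infty)=r_+$. Substituting $v=e^{-c_1\xi/2}\psi$ turns the linearization at $0$ into $\psi_t=\psi_{\xi\xi}+(r(\xi)-c_1^2/4)\psi$, whose generalized principal eigenvalue is $\Lambda_1-c_1^2/4$. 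Hence the hypothesis $c_1<2\sqrt{\Lambda_1}$ says exactly that the zero state is linearly unstable in the moving frame, while $c_1>2\sqrt{r_+}$ says that the state $r_+$ at $+\infty$ is "outrun" by the shift.

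\textbf{Existence.} First pick $R$ large so that the Dirichlet principal eigenvalue of $\phi''+r\phi$ on a bounded interval $(-R,R)$ exceeds $c_1^2/4$. This is possible because the generalized eigenvalue $\Lambda_1$ is the limit of the Dirichlet eigenvalues on growing intervals. The conjugated eigenfunction $\underline v=\delta e^{-c_1\xi/2}\phi_R\mathbf{1}_{(-R,R)}$ is then a compactly supported generalized stationary subsolution for small $\delta$. A constant $M\ge\sup r$ is a supersolution. Starting the moving-frame equation from $\underline v$, the solution increases in time and converges to a minimal positive steady state $U$ lying above $\underline v$.

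Next we identify the limits of $U$. As $\xi\to+\infty$, $U$ tends to a positive bounded steady state of the constant coefficient equation with $r_+$; Liouville-type classification of bounded positive solutions gives $U(+\infty)=r_+$. As $\xi\to-\infty$ we must show $U\to 0$. Behind the shift the population cannot keep pace only if $c_1>2\sqrt{r_-}$. This inequality is implied by $c_1>2\sqrt{r_+}$ together with $\Lambda_1\ge\max\{r_\pm\}$, provided we are in the case $r_-\le r_+$, or more generally by monotonicity of the exponential profile. In that regime a supersolution $Ce^{\mu\xi}$, with $\mu$ a root of $\mu^2-c_1\mu+r_->0$ failing, i.e.\ $\mu$ chosen so that the expression is negative, forces $U(-\infty)=0$.

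\textbf{Attractivity and $c^*=c_1$.} For compactly supported $u_0$, the upper bound comes from comparing with $\max\{U,\cdot\}$-type supersolutions. We use $\min\{M,\,U+Ce^{-\lambda(x-c_1t)}\}$-style barriers on the right, with $\lambda$ near $c_1/2$: because $c_1>2\sqrt{r_+}$, the exponential $e^{-\lambda\xi}$ is a supersolution of the linearization at $+\infty$ moving with speed $c_1$. This gives $\limsup u(t,x+c_1t)\le U$ locally uniformly, and vanishing for $x>(c_1+\eta)t$.

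For the lower bound we use the subsolution $\underline v$ from the existence step. After a finite time, $u(t,\cdot+c_1t)$ lies above $\underline v$ by the strong maximum principle, since $u(1,\cdot)>0$ everywhere; the needed smallness of $\underline v$ against the decay of $u(1,\cdot)$ over the bounded set $(-R,R)$ is automatic. Then $\liminf u(t,\xi+c_1t)\ge U$ by minimality. Combined with the upper bound and a squeezing argument (uniqueness of the limit between the minimal wave and a sliding family), this yields convergence to $U$ and hence $c^*=c_1$.

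The consistency of the identity $c^*=c_1$ can also be seen from Theorem~\ref{thm6.1} in this regime. The main obstacle is the upper bound. Showing that solutions stay below the \emph{minimal} wave, rather than some larger steady state, requires ruling out other forced waves reachable from compact data. I would try to handle this by showing that $U$ is the pointwise limit from above of solutions started at compactly supported data dominated by $U+\epsilon e^{-c_1\xi/2}\phi$-type profiles, exploiting the strict instability margin $\Lambda_1-c_1^2/4>0$.
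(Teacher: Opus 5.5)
Your proposal has two problems: the forced wave is oriented backwards, and the key upper-bound step is missing. For reference, the paper gives no proof of this theorem; it quotes \cite[Theorem 1.5(iii)]{berestycki2018forced}.

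\textbf{Orientation.} In the frame $\xi=x-c_1t$, with $v_t=v_{\xi\xi}+c_1v_\xi+(r(\xi)-v)v$, a positive steady state can never tend to $0$ as $\xi\to-\infty$. There the linearization $\phi''+c_1\phi'+r_-\phi=0$ has characteristic roots $\tfrac{-c_1\pm\sqrt{c_1^2-4r_-}}{2}$ with negative real part, so nontrivial solutions near $0$ blow up or oscillate as $\xi\to-\infty$. Biologically, whatever is left behind the interface grows to $r_->0$. Accordingly:
\begin{itemize}
\item The supersolution $Ce^{\mu\xi}$ you invoke does not exist, since $\mu^2+c_1\mu+r_->0$ for every $\mu>0$.
\item The inequality $c_1>2\sqrt{r_-}$ fails in the central case $r_->r_+$. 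For decreasing $r$ one has $\Lambda_1=r_-$, and the hypothesis reads exactly $2\sqrt{r_+}<c_1<2\sqrt{r_-}$ (cf.\ \eqref{e.0324.1}).
\item Your Liouville step at $+\infty$ also fails: limits of translates $U(\cdot+\xi_n)$ may be $0$ or a KPP profile of speed $c_1\ge 2\sqrt{r_+}$. In fact the barrier $\min\{M,Ce^{-\lambda\xi}\}$ you use later forces $U(+\infty)=0$.
\end{itemize}
The forced wave satisfies $U(-\infty)=r_-$ and $U(+\infty)=0$, like the profile $U_\beta$ in Section~\ref{ss:worse}. Your monotone construction from $\delta e^{-c_1\xi/2}\phi_R$ does produce this object. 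To know it is \emph{the minimal} forced wave, and that the lower-bound step (where $\delta$ depends on $u_0$) always lands on the same $U$, you need one more argument. The function $s\,e^{-c_1\xi/2}\phi_R$ is a strict subsolution on $(-R,R)$ whenever $s\max(e^{-c_1\xi/2}\phi_R)<\mu_R-c_1^2/4$, where $\mu_R$ is the Dirichlet eigenvalue. A touching argument then shows every positive steady state lies above it for one fixed $s_0$.

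\textbf{Upper bound.} This is the decisive gap, which you flag but do not close. Set $\mu_\pm=\tfrac{c_1\pm\sqrt{c_1^2-4r_+}}{2}$ and take $\lambda\in(\mu_-,\mu_+)$.
\begin{itemize}
\item The supersolution $\min\{M,U+Ce^{-\lambda\xi}\}$ only gives $v\le U+Ce^{-\lambda\xi}$ for all $t$, not $\limsup_{t\to\infty}v\le U$.
\item The solution started from this barrier decreases to a steady state $W$ with $U\le W\le U+Ce^{-\lambda\xi}$, and the whole point is to show $W=U$.
\item This is a genuine uniqueness statement. For $c_1>2\sqrt{r_+}$ there are non-minimal forced waves decaying like $e^{-\mu_-\xi}$ (e.g.\ $U_\beta$). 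So you must prove that the minimal wave is the only forced wave decaying faster than $e^{-\mu_-\xi}$. This is why attractivity is stated only for data $e^{-\lambda x_+}$ with $\lambda>\mu_-$.
\item This typically needs precise decay estimates at $+\infty$ guaranteeing that $W/U$ is bounded. These are followed by a multiplicative sliding argument, using $-(\theta U)''-c_1(\theta U)'-(r-\theta U)\theta U=\theta(\theta-1)U^2>0$ for $\theta>1$, with separate care for contact at $\xi=+\infty$.
\item Your suggested fix via $\epsilon e^{-c_1\xi/2}\phi$ and the margin $\Lambda_1-c_1^2/4$ cannot do this job. That instability pushes solutions up; it does not rule out larger steady states. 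Nor is there a sliding family in $\xi$, because the environment pins the wave.
\end{itemize}

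\textbf{Region behind the front.} $c^*=c_1$ also requires $\liminf u>0$ on $0\le x\le (c_1-\eta)t$. This is not a compact range of $\xi$, so local convergence to $U$ is not enough. You can get it from $\inf r>0$ together with the lower bound near $x=c_1s$ at each earlier time $s$.
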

In fact, they show that the minimal forced traveling wave attracts all initial data of the form $e^{-\lambda x_+}$, with $\lambda > \frac{c_1 - \sqrt{c_1^2 - 4r_+}}{2}$.

\section{Other Methods to Study Propagation Dynamics in Shifting Habitats}

Li and collaborators first investigated the problem of propagation in two connected moving semi-infinite habitats with different levels of
quality for reaction-diffusion equation \eqref{e.0312.1} in   \cite{li2014persistence} and for  integro-difference equations in \cite{li2016persistence,li2020traveling}. They established the existence of spreading speeds and traveling waves. Their approach is based on the discrete-time monotone semiflow framework developed by Weinberger \cite{weinberger1982long}, in which suitable subsolutions are constructed and propagated through iteration arguments to characterize the spreading dynamics.  
Such ideas are also applied to a reaction–diffusion model \cite{hu2020spreading} and to the nonlocal-diffusion model \cite{li2018spatial}. 

This framework was generalized to a functional analytic setting by Yi and Zhao in \cite{YiZhao2020}. 
Specifically, the authors established a general dynamical systems framework, in the flavor of \cite{liang2027asymptotic}, where the shifting environment is conceptualized as  environments without translational invariance but for which the right and left hand spatial limits exist and are translational invariant. 
As such, their framework can be applied to equations or systems incorporating such feature as time-delay or propagation on cylindrical domains. Subsequent works \cite{YiZhao2023, YiZhao2025JDE, YiZhao2025} further developed this framework by relaxing the translation monotonicity assumption. In particular, Yi and Zhao \cite{YiZhao2023} refined the analysis by introducing the notion of asymptotic annihilation and establishing propagation results under asymptotic translation invariance.
In \cite{YiZhao2025JDE}, propagation dynamics were studied for non-monotone evolution systems with two limiting systems admitting translation invariance and spreading speeds. As for the question of spreading speed, they are mainly interested in the case when the population can surpass the shifting habitat (i.e. $c^* > c_1$, from condition (UC) therein). In such a case, the spreading speed is equal to that of the limiting homogeneous system, and is therefore locally selected. This aspect of their result can be compared with \cite{hu2020spreading}. Here, we include some more recent applications:

\begin{enumerate}
    \item Reaction-diffusion equations in time-heterogeneous shifting habitats \cite{FangPengZhao2021, ZhangZhao2025} 
    \item Systems in time-periodic shifting habitats \cite{HouWangZhaoJDE2024,HouWangZhao2025JNS, HouWangZhaoJDE2025} 
    \item Shifting habitat with time-delay effects \cite{YiZou2025,JiangZhao2026}
    \item  Integro-difference models,  impulsive systems with birth pulse dynamics \cite{JiangYiZhao2024, ZhangYiWu2024, ZhangYiChen2024, LuWangZhao2026}.
    \item System in a cylinder with shifting effect \cite{GuoYiZhangZou2026}
\end{enumerate}


Apart from the Hamilton-Jacobi method mentioned in earlier sections, spatial invasion involving nonlocal pulling can also be handled with the construction of super/subsolutions. See, e.g. \cite[Chapter 6]{lam2022introduction} where formula \eqref{e.0315.nlp} was proved using elementary arguments. For the problem involving a shifting diffusivity; See \cite{faye2022asymptotic}.
More recently, a refined construction was introduced to estimate the refined logarithmic delay (see Section \ref{s.11} and \cite{LamWu_preprint}). Super/subsolutions method was also used to study \blue{competitive systems in a road-field environment with climate change \cite{Ou2025}; nonlocal dispersal models  \cite{VoTaVuDo2026, LiuLiXu2026};  predator-prey systems} with shifting habitat \cite{ZhouWuJDE2025};
force waves for a population model with mobile and stationary
states \cite{XuXiaoWu2025};  
spreading for noncooperative system with time delay in a shifting habitat \cite{XieLin2025}; the
Lotka-Volterra system in time-period habitat \cite{WangQiao2025}; 
a lattice system in spatially period and shifting habitat \cite{SanWangWu2026}. 

\section{Spreading in Competition Systems}\label{spread-competition}


Returning to the discussion of the full competition system \eqref{e.lv}, one observes that the scalar
shifting-habitat model captures essential features of multi-species invasion.

Motivated by the post-glacial northward migration of several tree species
into newly deglaciated regions of North America, Shigesada et al.~\cite[Ch.~7]{shigesada1997biological}
formulated the problem of spatial spread for two or more competing species
in an initially unoccupied habitat. For two competing species, it was
conjectured that, for large times, the solution resembles a pair of stacked
traveling fronts, that is, two transition layers propagating at distinct
speeds $c_1>c_2$ and connecting three homogeneous equilibrium states
$(0,0)$, $E_1$, and $E_2$. Here $E_1$ denotes the semi-trivial equilibrium
in which the faster species persists, while $E_2$ represents either the
other semi-trivial equilibrium or the coexistence equilibrium. Although
the spreading speed $c_1$ of the faster species can typically be predicted
from the corresponding single-species equation (since the slower species
is negligible near the leading edge), determining the second speed $c_2$
remained unresolved for many years.

Lin and Li~\cite{lin2012asymptotic} first studied the spreading behavior of the competition system
\eqref{e.lv} in the weak competition regime $0<a<b<1$ in which every component of the initial data is compactly
supported, obtaining bounds for the spreading
speed $c_2$ of the slower species. In the strong competition case
$a,b>1$, Carr\`ere~\cite{carrere2018spreading} determined both spreading speeds,
showing that $c_2$ coincides with the unique wave speed of traveling
fronts connecting the semi-trivial equilibria $(1,0)$ and $(0,1)$;
see also \cite{peng2021sharp,wu2023sharp} for more refined asymptotic descriptions.

For system \eqref{e.lv} in the monostable case when $0<a<1$, we observe {\it nonlocal selection} of spreading speed, i.e. the second spreading speed $c_2$ may
depend on the speed $c_1$ of the leading front. This phenomenon was
demonstrated by Holzer and Scheel~\cite{holzer2014accelerated} in the study of 
the ``decoupled'' case $a=0$ and $b>0$. Their analysis showed that $c_2$
is determined by the linear instability of the zero state for an elliptic
problem with spatially heterogeneous coefficients.

In the coupled case, the intermediate regime $0<a<1<b$ was studied in
a recent work of Girardin and the first author~\cite{girardin2019invasion}. In case the weak competitor $v$ is the faster spreader (i.e. $dr>1$)  an explicit expression for $c_2$ is derived, namely,
\begin{equation}\label{e.lv.nlp}
c_2 = \max\{c^*,c_{LLW}\},     
\end{equation}
where $c^*$ is given by the formula \eqref{e.0315.nlp} for monotone environments, and $c_{LLW}$ is the minimal wave speed of \eqref{e.lv} corresponding to the invasion of the homogeneous state $(0,1)$ by the homogeneous state $(1,0)$ \cite{lewis2002spreading,li2005spreading}. It is interesting that in the expression \eqref{e.lv.nlp}, $c^*$ arises from local or nonlocal pulling, whereas $c_{LLW}$ represents the nonlinear effects of pushed waves. In particular, $c_2$ may
exceed the minimal wave speed of traveling fronts connecting $E_1$ and
$E_2$, and that its value depends on the first spreading speed $c_1$
in a non-increasing manner. The initial proof in \cite{girardin2019invasion} was based on delicate constructions of generalized super- and sub-solutions.
An alternative proof was later given in \cite{liu2019asymptotic,liu2021asymptotic}, using the Hamilton--Jacobi framework, for the weak competition case
$a,b\in (0,1)$ .

\medskip

\noindent {\bf Spreading dynamics of PDE systems without shifting habitat.} 
This simplified treatment via Hamilton-Jacobi equations enables us to later analyze the spreading of three
competing species all with compactly supported initial data \cite{liu2021stacked}. We are able to determine the (distinct) speed of each of the three competing species under certain situations.
To the best of our knowledge, this is the only work treating the spreading of three invasion fronts with distinct speeds in a system without comparison principle. 

Stacked invasion fronts have also been examined in other types of
systems. Examples include prey--predator models
\cite{ducrot2021asymptotic,ducrot2019spreading,LamLee2024} (See also \cite{wu2023propagation} for spreading result in periodic environment and \cite{ducrot2024spreading} for prey-predator system on a lattice),
SIS epidemic models \cite{chen2017longtime}; hunter-gatherer models \cite{xiao2021spreading},
and cooperative systems with identical diffusion rates \cite{iida2011stacked}.

\blue{Compared with traveling terraces \cite{Ducrot2014existence,polacik2020propagating} (a significant subject we have barely touched upon), in which typically all intermediate states are stable equilibria of the reaction term, stacked invasion fronts in systems seems to be able to sustain unstable states which are seemingly stable in the moving frame where they appear, but are convectively unstable (i.e. small perturbations grow while being advected away).}

With the exception of \cite{ducrot2021asymptotic,girardin2019invasion,liu2019asymptotic,liu2021stacked,liu2021asymptotic}, the spreading speeds identified in systems of PDEs in the literature are usually locally selected in the moving frame of the invasion front and are not
affected by the presence of other fronts propagating at different speeds.

Spreading phenomena in parabolic systems of a different type have also
been investigated in \cite{barles1990wavefront,freidlin1991coupled,JiangWu2025,li2025propagation}. In those works, the
systems are cooperative and irreducible when linearized about the
trivial equilibrium. Consequently, all components propagate with a
single common spreading speed, and stacked invasion fronts do not occur.

\medskip

\noindent {\bf Spreading dynamics of PDE systems involving shifting habitat.} 
We also mention some recent work devoted to the spreading dynamics in reaction-diffusion systems where the coefficients depends on the shifting heterogeneity $y=x-c_1 t$. Recent examples include   competitive systems with three species \cite{wang2024spreading}, as well as epidemic model of West Nile virus \cite{ahn2024spreading}.

\section{Spreading in Predator-Prey Systems}

\blue{For predator-prey systems a comparison principle is not immediately available and many studies regarding propagation phenomena in these systems have focused on the dynamics of traveling wave solutions. Until recently, few works have treated the spreading dynamics of predator-prey systems
with general {\it compactly supported} initial data. In \cite{pan1,pan2}, Pan determined the spreading speed of the predator for a
predator-prey system with initially constant prey density and compactly-supported predator.
Shortly after, Ducrot, Giletti, and Matano \cite{ducrot2019spreading} used methods from uniform persistence
theory to characterize the spreading dynamics when both predator and prey are initially
compactly supported. They showed that the behavior can be classified based on the speeds
of the prey in the absence of predator, and of the predator when prey is abundant. 

More recently, the Hamilton-Jacobi approach was applied to study the case when one or more of the coefficients depend on a shifting variable \cite{LamLee2024}. Such a situation 
appears biologically, e.g., with changes in the efficiency
of predator species in converting their prey to offspring \cite{Daugaard2019}; See also \cite{tao2026propagation} for nonlocal diffusion systems,  
 \cite{Chen2026propagation} for a stage-structured prey-predator system, and \cite{guo2023spreading,yang2025spreading} for systems with three or more species.

Below we discuss a recent breakthrough by Z. Jin \cite{Jin2026}, which establishes the equivalence between the spreading dynamics of a prey-predator system and an auxilliary Fisher-KPP equation. Consider
\begin{equation}
\left\{
\begin{aligned}
u_t &= d\, u_{xx} + u f(x-c_1t,u,v),
&& t>0,\; x\in\mathbb{R},\\
v_t  &= v_{xx} + v g(x-c_1t,u,v),
&& t>0,\; x\in\mathbb{R},
\end{aligned}
\right.    
\end{equation}
where \[
f:\mathbb{R}\times[0,\infty)\times[0,\infty)\to\mathbb{R}
\quad\text{and}\quad
g:\mathbb{R}\times[0,\infty)\times[0,\infty)\to\mathbb{R}
\] are locally Lipschitz continous and satisfy the following:
\begin{enumerate}

\item[(i)] \textbf{(Predation type)}
For each $u>0$ and $z\in\mathbb{R}$, the map
$v\mapsto f(z,u,v)$ is decreasing.
For each $v>0$ and $z\in\mathbb{R}$, the map
$u\mapsto g(z,u,v)$ is increasing.

\item[(ii)] \textbf{(Monostable)}
\[
f(\cdot,1,0)\equiv 0, \qquad \text{ and }\qquad 
\inf_{z\in\mathbb{R}} f(z,u,0)>0,
\qquad
\forall\,u\in[0,1).
\]

\item[(iii)] \textbf{(Strong KPP)}
For each $u\ge0$, the map
$v\mapsto g(z,u,v)$ is nonincreasing uniformly in $z\in\mathbb{R}$.

\item[(iv)] The limits
\( \displaystyle
\lim_{z\to\pm\infty} g(z,1,0) \)
exist and satisfies
\( \displaystyle
\inf_{z\in\mathbb{R}} g(z,1,0)>0.
\)
\end{enumerate}

For example, we can have
\[
f(z,u,v) = r(1-u-av) \quad \text{ and }\quad g(z,u,v) = k + bu - v
\]
with $r,a,k,b$ being positive function of $z$. 

Jin established the following pointwise estimate.
\begin{lemma}\label{jin:4.1}
Suppose $\inf_{\mathbb R} u_0>0$.
For each $\alpha>0$,
there exist constants $M_\alpha>0$ and $T_\alpha>0$ such that
\[
1-u(t,x)\le \alpha+M_\alpha v(t,x),
\qquad
\forall\, t\ge T_\alpha,\; \forall\, x\in\mathbb{R}.
\]
\end{lemma}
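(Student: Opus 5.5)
We argue by contradiction and use a compactness (blow-up) argument of the same type as in the first proof of Theorem \ref{t.0312.1}. First we record a priori bounds. By the monostable structure (ii) and comparison for the $u$-equation, where $f(\cdot,u,v)\le f(\cdot,u,0)$ by (i), we get $\limsup_{t\to\infty}\sup_x u\le 1$. Using (iii)–(iv) and $u\le 1+o(1)$ in the $v$-equation gives a uniform bound $0\le v\le K$ for $t\ge 1$. Parabolic estimates then give uniform $C^{1+\theta/2,2+\theta}$ bounds for $(u,v)$ on $[1,\infty)\times\mathbb{R}$. Since $u>0$ and $u\ge 0$, the claim is trivial wherever $v\ge 1/M_\alpha$. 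So the content of the lemma is the region where $v$ is small and $u<1-\alpha$.

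Suppose the lemma fails for some $\alpha>0$. Then for every $n$ there are $t_n\ge n$ and $x_n$ with $1-u(t_n,x_n)>\alpha+n\,v(t_n,x_n)$. In particular $v(t_n,x_n)\le 1/n\to0$ and $u(t_n,x_n)<1-\alpha$. Set $u_n(t,x)=u(t+t_n,x+x_n)$, $v_n(t,x)=v(t+t_n,x+x_n)$, and $z_n=x_n-c_1t_n$. Passing to a subsequence, $z_n\to z_\infty\in[-\infty,\infty]$ and $(u_n,v_n)\to(u_\infty,v_\infty)$ in $C^{1,2}_{loc}(\mathbb{R}^2)$.

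The function $v_\infty\ge0$ solves a linear parabolic equation with bounded coefficient $g(\cdot,u_\infty,v_\infty)$ and vanishes at $(0,0)$. By the strong maximum principle, $v_\infty\equiv0$ on $(-\infty,0]\times\mathbb{R}$. Hence, on $t\le 0$, $u_\infty$ is an entire (backward) solution of
\[
\partial_t u_\infty=d\,\partial_{xx}u_\infty+u_\infty\,\tilde f(t,x,u_\infty),
\]
where $\tilde f$ is a limit of $f(z_n+x-c_1t,\cdot,0)$. By (ii), $\tilde f(t,x,s)\ge\kappa_\alpha:=\inf_z f(z,1-\alpha/2,0)\wedge\cdots>0$ uniformly for $s\le 1-\alpha/2$; we use a Lipschitz/compactness argument on $[0,1-\alpha/2]$ to get a uniform positive lower bound. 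We also have $u_\infty\le1$ and $u_\infty(0,0)\le1-\alpha$.

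The remaining step is to show that such a backward entire solution must satisfy $u_\infty\equiv1$, which gives the contradiction. If we knew $\inf_{(-\infty,0]\times\mathbb{R}}u_\infty>0$, this would follow. Indeed, comparing with the spatially homogeneous ODE $\dot y=\kappa_\alpha y$ (capped at $1-\alpha/2$), started from $\inf u_\infty$ at time $-T$ with $T$ large, gives $u_\infty\ge1-\alpha/2$ everywhere at $t=0$.

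\textbf{Main obstacle.} The hard step is this uniform lower bound for $u_\infty$, i.e.\ excluding that the prey is near extinction exactly where the predator is absent. Here the hypothesis $\inf u_0>0$ has to enter. My first attempt is to upgrade the blow-up. Since $v_n\to0$ locally uniformly on $[-T,0]\times[-R,R]$ for every $T,R$, on such boxes $u_n$ is a supersolution of the KPP-type equation with growth rate $\kappa_\alpha-o(1)$ below level $1-\alpha/2$. A standard Dirichlet subsolution $\epsilon\,e^{\kappa t/2}\phi_R(x)$ on the box then forces $u_n(0,0)\ge1-\alpha/2$, as long as $u_n(-T,\cdot)\ge\epsilon_0$ on $[-R,R]$ for some $\epsilon_0$ independent of $n$.

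To secure that, I would run a persistence argument in the spirit of Ducrot--Giletti--Matano. Let $\epsilon_0$ be small and suppose $\inf_x u(t,x)$ drops below $\epsilon_0$ at some first time after $T_\alpha$. Near that point $v$ is small, because $g(\cdot,u,v)\le g(\cdot,\epsilon_0,v)$ and the predator cannot sustain itself on scarce prey; this is where (iii) and the monotonicity in (i) are used. The small-$v$ region then lets $u$ grow, contradicting the choice of first time. Since $\inf u_0>0$, such a first time is well defined. This yields $\liminf_{t\to\infty}\inf_x u>0$ on regions of small $v$, which is what is needed. If this direct argument is delicate, the fallback is to choose $(t_n,x_n)$ as near-minimizers of $1-u-n v$ over a time slab, so that the limit configuration inherits the needed one-sided bounds.
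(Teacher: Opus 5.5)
\textbf{Gap: the lower bound for $u_\infty$ is the whole lemma, and it is not proved.}

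Your first blow-up is correct as far as it goes. The contradiction sequence, the strong maximum principle giving $v_\infty\equiv 0$ on $(-\infty,0]\times\mathbb{R}$, and the ODE/Dirichlet-subsolution step once $u_\infty$ is bounded below all work. (The survey only refers to Jin's Lemma 4.1, so I am judging your argument on its own terms.) The step you call the main obstacle, however, is the entire content of the lemma. Given your reduction, the lemma is equivalent to uniform persistence away from the extinction state, $\liminf_{t\to\infty}\inf_x (u+v)(t,x)>0$. Indeed, the lemma gives $(1+M_\alpha)(u+v)\ge u+M_\alpha v\ge 1-\alpha$, and conversely such a bound is exactly what yields $u_\infty\ge\epsilon$ in your limit. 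Without it, $v_\infty\equiv 0$ paired with $u_\infty\equiv 0$, or with a front-like entire solution of the prey-only equation, is consistent with everything you derived, so no contradiction arises.

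\textbf{Why your persistence sketch fails.} Its mechanism is backwards.
\begin{itemize}
\item It targets $\liminf_{t\to\infty}\inf_x u>0$, which is false in general. In the survey's own example $f=r(1-u-av)$, $g=k+bu-v$ with $k>0$ and $ak>1$, the state $(0,k)$ is stable for the kinetics, so the prey is wiped out wherever the predator is established. The lemma still holds there only because $v\approx k$.
\item At a point where $u$ first reaches a small level $\epsilon_0$ as a spatial minimum, $u_t\le 0\le u_{xx}$ forces $f(z,\epsilon_0,v)\le 0$. Since $\inf_z f(z,\epsilon_0,0)>0$, this forces $v$ to be bounded below. So the prey becomes scarce exactly where the predator is abundant, not scarce. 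Moreover, if $g(\cdot,0,0)>0$ the predator does not need prey at all.
\item The fallback via near-extremizers of $1-u-nv$ gives no lower bound on $u_\infty$ either. The term $nv_n$ is unbounded wherever $v_n\gg 1/n$. Also, $u+Mv$ satisfies no maximum principle: the diffusion rates differ, and even for $d=1$ it decreases near the $v$-axis when $g(\cdot,0,0)<0$.
\end{itemize}

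\textbf{What is needed.} You need a genuine persistence argument for $u+v$, for instance a second blow-up.
\begin{itemize}
\item Suppose $(u+v)(t_n,x_n)\to0$. Since $\inf u_0>0$ and the prey grows at rate at least $\inf_z f(z,0,0)>0$ inside $\{u+v<\epsilon\}$ (Dirichlet subsolution on a large ball plus Harnack), there is a last exit time $s_n$ from this neighbourhood near $x_n$.
\item At this exit time, $t_n-s_n\to\infty$, and at the exit point $u$ is negligible while $v\approx\epsilon$.
\item The limit there is $u_\infty\equiv 0$ together with a bounded entire solution of the limiting predator-only equation $v_t=v_{xx}+v\,g(\cdot,0,v)$. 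This solution equals $\epsilon$ at the origin and is at most $\epsilon$ afterwards on a large region.
\item Such a solution must then be excluded using the structure of $g(\cdot,0,\cdot)$: decay if $\sup g(\cdot,0,0)<0$, growth if $\inf g(\cdot,0,0)>0$.
\end{itemize}

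\textbf{Minor point.} Your a priori bounds $u\le 1+o(1)$ and $v\le K$ use $f(\cdot,u,0)\le 0$ for $u\ge 1$ and $g(\cdot,u,v)<0$ for large $v$. Neither is among (i)--(iv) as listed.
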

\begin{proof}
 See \cite[Lemma 4.1]{Jin2026}.   
\end{proof}
Hence, the population density $v$ of the predator satisfies the following:
\[
    v_t \leq  v_{xx} + v g(x-c_1 t, 1,v) \quad \text{ and }\quad v_t \geq v_{xx} + v G_\alpha(x-c_1 t, v)
\]
where $G_\alpha(z,v) = g(z,1-\alpha - M_\alpha v, v)$ is also nonincreasing in $v$. 

By repeating our previous arguments, one can deduce that the rate function $w = \lim_{\ep \to 0} -\ep \log v\left(\frac{t}{\ep}, \frac{x}{\ep} \right)$ satisfies
\[
\min\{w, w_t + |w_x|^2 + R(x/t)\} \geq 0 \geq \min\{w, w_t + |w_x|^2 + R_\alpha(x/t)\}
\]
with 
\[
R(x/t) = \limsup_{\ep \to 0 \atop (x',t') \to (x,t)} g(\frac{x' - c_1 t'}{\ep}, 1, 0) \]
and
\[R_\alpha(x/t) = \limsup_{\ep \to 0 \atop (x',t') \to (x,t)} G_\alpha(x'-c_1t',0) = \limsup_{\ep \to 0 \atop (x',t') \to (x,t)} g(\frac{x' - c_1 t'}{\ep}, 1-\alpha, 0). 
\]
Since $\alpha>0$ is arbitrary, we can let $\alpha \to 0$, so we again deduce the limiting Hamilton-Jacobi equation \eqref{hje.0722b} (with $R$ given above) and Theorem \ref{thm6.1} applies. 
In particular, it is proved that for a predator invading a region with established prey density (i.e. $\inf_{\mathbb{R}} u_0 >0$), the spreading speed of the predator coincides with the single species KPP problem by setting the density of the prey to be $1$ everywhere.}

\section{Entire Solutions and Front Interactions}

Entire solutions are solutions that are defined for all time $t\in\mathbb{R}$. 
They play a key role in the study of large-time dynamics of reaction--diffusion equations, since they often describe global-in-time patterns connecting different traveling fronts or equilibrium states.

In Section \ref{s.2}, we proposed to study the structure and asymptotic behavior of entire solutions of the scalar equation. Here we discuss some recent results concerning the entire solutions of \eqref{e.lv}. In particular, for equation \eqref{e.0312.1}, one may seek solutions that for $t\to -\infty$ behave like one or several traveling fronts and whose structure evolves as captured by the Cauchy problem as $t\to +\infty$.

\medskip

\noindent {\bf Entire solutions for the Lotka-Volterra competition system.} 
Only a limited number of results are currently available concerning entire solutions of \eqref{e.lv}. 
Morita and Tachibana \cite{morita2009entire} established the existence of entire solutions of merging-front type under the cases $a,b>1$ and $a<1<b$.  
More precisely, their construction produces solutions which, as $t\to -\infty$, resemble two traveling waves connecting the equilibria $E_1$ and $E_2$ which are moving toward each other from opposite directions.
As time increases, these fronts merge and the solution eventually converges uniformly in $x\in\mathbb{R}$ to the homogeneous stable state as $t\to +\infty$.

Further developments in the strong competition case were obtained in \cite{guo2019entire}, where 
{entire solutions formed by}
three or four merging bistable fronts are constructed. 
These results demonstrate that the interaction of several fronts can produce a rich class of entire solutions. {We also refer the reader to \cite{hao2022entire} for analogous results in the case of nonlocal dispersal.}

More recently, new types of entire solutions exhibiting different dynamical behaviors are constructed. 
These solutions originate from a traveling front of a single species
\[
\Phi_c(x-ct):=(\red{\phi_c(x-ct)},0)
\quad \text{as } t\to -\infty,
\]
while for $t\to +\infty$ it evolves into several distinct (monostable) fronts presenting as {\it diverging waves}, i.e.  they are moving away from each other. 
In particular, for the weak competition case $a,b<1$, it has been shown that the set of entire solutions (modulo trivial translations in space and time) forms a four-dimensional manifold, as parameterized by the wave speeds, with a limiting configuration depicted in Figure \ref{fig.2} (left panel). These entire solutions correspond to the case when we observe stacked invasion waves as $t\to\infty$ in the Cauchy problem. We note that the mechanism of {\it nonlocal pulling} imposes additional constraints in the intermediate fronts, so not all wave speeds can be prescribed. 

In a separate work, Salako obtained the existence of multiple families of {\it merging waves} \cite{salako2021invasion}. Interestingly, the conditions for the existence of such entire solutions is precisely those in which there is a slow {\it and} weak competitor. Under such circumstances, this species is always eliminated in the Cauchy problem (regardless of initial data), resulting in a trivial pattern of a traveling wave consisting of one single species if one looks forward in time. These classes of entire solution, however, demonstrate non-trivial patterns if one looks {\it backward in time}. 
Despite these advances, the general structure of entire solutions for \eqref{e.lv} remains largely open and presents many challenging questions.

\begin{figure}[htbp]
    \includegraphics[width=\textwidth]{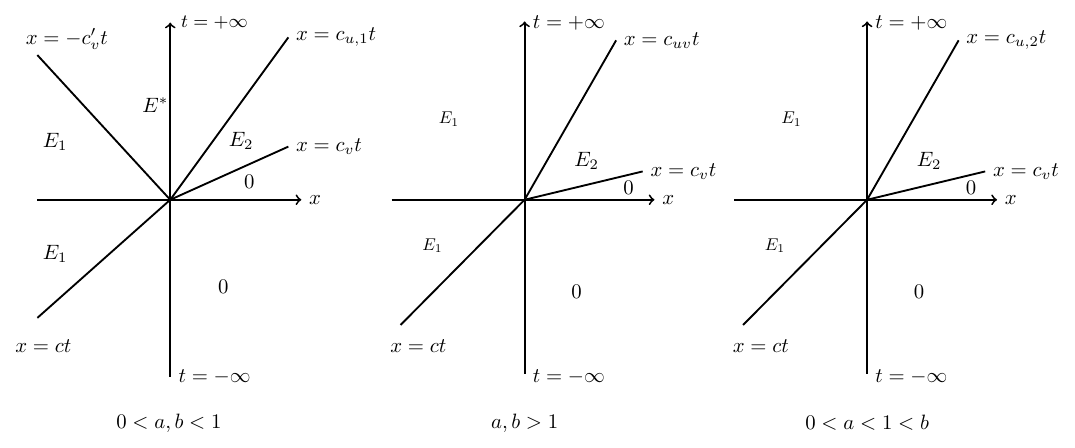}
    \caption{\small The hyperbolic limits of three families of entire solutions constructed in \cite{lam2020entire} in the form of diverging waves that connects $E_1$ as $x \to -\infty$ and ${\bf 0}$ as $x \to +\infty$. The second invasion front where $E_2$ is destabilized is nonlinearly selected. Here, ${\bf 0}=(0,0)$, $E_1 = (1,0)$, $E_2 = (0,1)$ and $E^*=(\tfrac{1-a}{1-ab},\tfrac{1-b}{1-ab})$.}
    \label{fig.2}
\end{figure}

\begin{problem}
    Characterize the set of all entire solutions to \eqref{e.lv}.
\end{problem}
While different classes of entire solutions, including traveling wave solutions, and diverging and merging waves are constructed \cite{morita2009entire,lam2020entire,salako2021invasion}, the characterization of the set of all traveling wave solutions remains a difficult question. For the state-of-the-art in this direction for the Fisher-KPP equation, see \cite{Hamel1999entire,Hamel2001travelling}.







\section{Bramson's Logarithmic Delay in Shifting Habitats}\label{s.11}


Here we discuss some recent result concerning the correction of order $\log t$ to the front position of solution for a class of reaction-diffusion equations with shifting environment. For the case of shifting habitat that is decreasing in the spatial variables, we prove some previously unpublished results; See Subsection \ref{ss:worse}.

Before doing so, we review  recent progress on the sharp spreading phenomena for the Fisher-KPP equation \cite{fisher1937wave,KolmogorovPetrovskyPiskunov1937}:
\begin{equation}\label{eq0}
    \begin{cases}
        u_t = u_{xx} + f(u),& \quad t>0,~x\in\mathbb{R},\\
        u(0,x) =u_0(x),& \quad x\in\mathbb{R},
    \end{cases}
\end{equation}
where $f$ has a monostable structure and satisfies the KPP condition:
\begin{align*}
  f(0)=f(1)=0,\quad  f>0\  \mbox{in $(0,1)$},\quad  f'(0)u\geq f(u)\    \mbox{for $u\in[0,1]$}.  
\end{align*}
The initial data $u_0$ satisfy 
$0\leq (\not\equiv)u_0\leq 1$ and $u_0(x)=0$ for $x\geq A$ with some $A\in\mathbb{R}$.
To locate the front, we set
\begin{align*}
\xi_\delta(t):=\sup\{x\in\R|\, u(t,x)\geq \delta\} \qquad \text{ for each }\delta\in(0,1).
\end{align*}
Then, from \cite{aronson1975nonlinear}, we see that 
\begin{align*}
 \xi_\delta(t)=c^*t+o(t),\quad t\to\infty,
\end{align*}
where $c^*:=2\sqrt{f'(0)}$. A natural question is what exactly $o(t)$ is?  Bramson \cite{bramson1983convergence} used probabilistic arguments to prove that  for any $\delta\in(0,1)$, 
    \begin{equation*}
         \xi_\delta(t)=c^*t-\frac{3}{2\lambda_*}\log t+O(1),\quad  t\to\infty,
    \end{equation*}
    where $\lambda_*=c^*/2$. Furthermore,
        there exists a uniformly bounded $\gamma(t)$ such that
\begin{align*}   \label{bramson1}
\lim_{t\to\infty}\sup_{x\in\mathbb{R}}\Big|u(t,x)-U_{c^*}\Big(x-c^*t+\frac{3}{2 \lambda_{*}} \log t+\gamma(t)\Big)\Big|=0,
\end{align*}
where $U_{c^*}$ is the decreasing traveling front with minimal speed $c^*$, normalized by $U_{c^*}(0)=1/2$, and 
$\gamma(t)$ converges to a constant  $\gamma_{\infty}$ depending on the initial data.
The logarithmic delay term is widely known as the Bramson correction term (or Bramson shift). 

Later, Hamel et al. \cite{hamel2013short} applied PDE arguments to recover this result. See also \cite{uchiyama1978behavior,lau1985nonlinear,nolen2017convergence,addario2019branching,bouin2020bramson}. A natural question is whether analogous logarithmic corrections appear in other dispersal models.
Besse et al. \cite{besse2023logarithmic} considered the following lattice model 
\begin{align*}
  \frac{d}{dt}u_j(t)=u_{j+1}+u_{j-1}-2u_{j}+f(u_j),\quad t>0,\ j\in\mathbb{Z}.  
\end{align*}
Graham \cite{graham2022bramson} and Roquejoffre \cite{roquejoffre2024dynamics} studied the 
nonlocal KPP equation:
\begin{align*}
u_t=J*u -u+f(u) &\quad t>0, \ x\in\R,
\end{align*} 
where $(J * u )(t,x):=\int_{\R}J(x-y)u(t,y)dy$ and $J$ is compactly supported.
In both lattice and nonlocal KPP models with step-like (sufficiently steep) initial data, the front location satisfies the same Bramson-type correction:
\[
\xi_\delta(t)=c^*t-\frac{3}{2\lambda_*}\log t+O(1),\qquad t\to\infty,
\]
where $c^*$ denotes the spreading speed corresponding to various models.

When $u_0$ is not compactly supported (on the right); however, the correction may change and can even become an advance; critical tails may generate additional 
$O(\log\log t)$ effects \cite{bramson1983convergence}.
Recently, Alfaro et al. \cite{alfaro2024bramson} revisited the problem entirely from a PDE perspective, in the flavor of \cite{hamel2013short}. Let us summarize the results as follows. First, without loss of generality, one may assume that $f'(0)=1$ so that $c^*=2$. Let $u_0\in[0,1]$ satisfy $\liminf_{x\to-\infty}u_0(x)>0$ and there exist $K_1,K_2>0$ and $\gamma\ge -2$ such that
\begin{align}\label{ic-AGX}
    K_1 x^{\gamma} e^{-x} \leq u_0(x) \leq K_2  x^{\gamma} e^{-x},\quad x>1.
\end{align}

\begin{theorem}[\cite{alfaro2024bramson,bramson1983convergence}]
Let $u(t,x)$ be the solution to \eqref{eq0}--\eqref{ic-AGX}. Then the following hold:
\begin{itemize}
    \item[(i)] If $\gamma>-2$, then there exists $C\geq 0$ such that
\[
\lim_{t\to +\infty}\ \inf_{|h|\le C}
\left\|\,u(t,\cdot)-U_{c^*}\Big(\cdot-2t+(\frac{1-\gamma}{2})\log t+h\Big)\right\|_{L^\infty(0,+\infty)}=0.
\]
  \item[(ii)]  If $\gamma=-2$, then there exists $C\geq 0$ such that
\[
\lim_{t\to +\infty}\ \inf_{|h|\le C}
\left\|\,u(t,\cdot)-U_{c^*}\Big(\cdot-2t+\frac{3}{2}\log t-\log \log t+h\Big)\right\|_{L^\infty(0,+\infty)}=0.
\]
\end{itemize}
\end{theorem}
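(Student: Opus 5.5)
The proof follows the PDE approach of Hamel--Nolen--Roquejoffre--Ryzhik, as adapted by Alfaro et al. Work in the moving frame $y=x-2t$ and remove the exponential factor by setting $u(t,x)=e^{-y}v(t,y)$. Linearizing $f$ at $0$ with $f'(0)=1$, $v$ solves the heat equation $v_t=v_{yy}$ up to the nonlinear correction $-(u-f(u))e^{y}$, which is negligible in the far field $y\gg 1$. The initial condition \eqref{ic-AGX} becomes $v(0,y)\asymp y^{\gamma}$ for large $y$.

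\textbf{Step 1: the linear problem.} We first study the linear problem on the half-line $y>\ell$ with Dirichlet condition $v(t,\ell)=0$. This condition mimics the fact that $u$ is $O(1)$, and not of size $e^{-y}$, behind the front. Using the Dirichlet heat kernel
\[
\frac{1}{\sqrt{4\pi t}}\Bigl(e^{-(y-z)^2/4t}-e^{-(y+z)^2/4t}\Bigr)\approx \frac{yz}{\sqrt{4\pi}\,t^{3/2}},
\]
for $y,z\ll\sqrt t$, we find, for $y=O(1)$,
\[
v(t,y)\approx C\,y\,t^{-3/2}\int_0^{\sqrt t} z\cdot z^{\gamma}\,dz\sim C\,y\,t^{-3/2}t^{(\gamma+2)/2}=C\,y\,t^{(\gamma-1)/2}\quad\text{if }\gamma>-2,
\]
and $v(t,y)\sim C\,y\,t^{-3/2}\log t$ if $\gamma=-2$. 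The critical case picks up a logarithm because $\int^{\sqrt t}z^{-1}\,dz=\tfrac12\log t$. Hence $u\approx y\,e^{-y}t^{(\gamma-1)/2}$, respectively $y\,e^{-y}t^{-3/2}\log t$. This profile matches the tail $y e^{-y}$ of $U_{c^*}$ after a shift $s(t)$ with $e^{-s}=t^{(\gamma-1)/2}$, respectively $t^{-3/2}\log t$. That gives $s(t)=\frac{1-\gamma}{2}\log t$, respectively $s(t)=\frac32\log t-\log\log t$, which are exactly the shifts in (i) and (ii).

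\textbf{Step 2: sub- and supersolutions.} We make Step 1 rigorous with barriers.
\begin{itemize}
\item \emph{Supersolution.} Take the solution $\bar v$ of the linear Dirichlet problem on $y>-\ell$ with data $K_2(y+\ell)_+^{\gamma}$, cut off near the boundary. Since $f(u)\le u$, the function $\min\{1,e^{-y}\bar v\}$ is a supersolution of \eqref{eq0} in the moving frame, after a fixed shift chosen so that it dominates $u_0$.
\item \emph{Subsolution.} Use the Dirichlet problem on the moving half-line $y>\ell+\sqrt t$ or $y>\ell+t^{\beta}$ with $\beta<1/2$. There $u\le e^{-\ell}\ll1$, so $f(u)\ge u-Mu^{2}$ and the quadratic error is absorbed, as in Hamel et al.
\end{itemize}
Together these give $\xi_\delta(t)=2t-s(t)+O(1)$ and two-sided bounds $u(t,y+s(t))\asymp y e^{-y}$ on $y\in[1,\sqrt t]$.

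\textbf{Step 3: convergence to the wave.} Let $t_n\to\infty$ and $h_n\to h_\infty$, and consider the shifted solutions $u(t+t_n,x+2t_n-s(t_n)+h_n)$. These are compact by parabolic estimates. Their limit is an entire solution lying between two translates of $U_{c^*}(x-2t)$ with the same $y e^{-y}$ tail. The Liouville-type rigidity for KPP entire solutions (Berestycki--Hamel, or the argument of Hamel et al.) then forces the limit to be a translate of $U_{c^*}$. Since the shifts $h$ range over a bounded set, this yields the stated $\inf_{|h|\le C}$ convergence on $(0,\infty)$; behind the front, $u\to1$ by standard arguments.

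\textbf{Main obstacle.} The hardest part is Step 2 in the critical case $\gamma=-2$. There the $\log\log t$ term comes from a borderline integral, so the errors from the moving boundary $t^{\beta}$ and from the nonlinear correction must be shown to be $o(\log t)$ multiplicatively. My first attempt would be the boundary $y=t^{\beta}$ with small $\beta$, tracking the heat-kernel integral over $z\in[t^{\beta},\sqrt t]$, which still contributes $(\tfrac12-\beta)\log t$, so only the constant changes. If that fails I would fall back on the Alfaro et al. self-similar variables $\tau=\log t$, $\zeta=y/\sqrt t$ and a spectral expansion of the Dirichlet operator.
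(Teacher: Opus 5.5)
\paragraph{Comparison with the source.} The survey does not prove this theorem; it quotes it from Bramson and from Alfaro et al. The relevant benchmark is therefore the PDE argument of Alfaro et al., which adapts the Hamel et al.\ scheme you outline. Your Step~1 heuristic recovers both shifts correctly. The supersolution in Step~2, however, does not work, and it is where the real content of the proof lies.

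\paragraph{The supersolution.} As written, $\bar v(t,-\ell)=0$ makes $e^{-y}\bar v$ vanish on the line $y=-\ell$, where $u>0$. Taking $\min\{1,\cdot\}$ cannot repair this: to glue with the constant $1$ across $y=-\ell$ you would need $e^{-y}\bar v>1$ near the boundary. So you have no comparison at the boundary and no control of $u$ for $y<-\ell$.

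More fundamentally, a boundary fixed in the frame $x-2t$ cannot give an $O(1)$ correction even with admissible boundary data. A priori you only know $u\le 1$, so you must take $\bar v(t,-\ell)\ge e^{-\ell}$. The heat equation with such a non-decaying boundary datum gives $\bar v\approx e^{-\ell}+C(y+\ell)t^{(\gamma-1)/2}$ for $y+\ell\ll\sqrt t$.
\begin{itemize}
\item For $\gamma<1$ this yields only $\xi_\delta(t)\le 2t+O(1)$, so the whole delay is lost.
\item For $\gamma>1$ the front sits at $y\approx\frac{\gamma-1}{2}\log t$, where $y+\ell\sim\log t$, which costs an extra $\log\log t$.
\end{itemize}
Step~1 does not see this only because an $O(\log t)$ displacement of the boundary does not change the power of $t$.

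The missing idea is to put the absorbing boundary \emph{at the front}. Set $\xi=x-2t+s(t)$ and $u=e^{-\xi}p$. Then $p_t\le p_{\xi\xi}-s'(t)p_\xi+s'(t)p$, with $s'(t)=\frac{1-\gamma}{2t}$ (resp.\ $\frac{3}{2t}-\frac{1}{t\log t}$). Impose $p(t,-\ell)=e^{-\ell}$, so that $e^{-\xi}p\ge 1\ge u$ on the boundary. Then prove the linear lemma $p(t,\xi)\le C(1+\xi+\ell)$ for $-\ell\le\xi\lesssim\sqrt t$. It holds because the growth $e^{s(t)}$ from the zero-order term exactly cancels the Dirichlet decay $t^{(\gamma-1)/2}$ (resp.\ $t^{-3/2}\log t$) of data $\asymp\xi^\gamma$. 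The constant boundary datum contributes only $O(1+\xi)$. This lemma, not the heuristic, is what certifies $s(t)$.

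\paragraph{The lower bound.} Several points also need repair here.
\begin{itemize}
\item A boundary at $\ell+\sqrt t$ changes the decay exponent: in self-similar variables it removes a fixed interval from the domain and raises the principal Dirichlet eigenvalue. Only $t^\beta$ with $\beta<\frac12$ is admissible.
\item An exact heat solution $\underline v$ does not give a subsolution. For $f(u)=u-u^2$ the defect of $e^{-y}\underline v$ is $+e^{-2y}\underline v^2>0$. An explicit correction is needed, and what matters is an inequality for $\underline u$, not the bound $u\le e^{-\ell}$.
\item The far-field bound at distance $t^\beta$ must still be carried back to the front by comparison with $U_{c^*}(x-2t+s(t)+h)$. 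Its defect is $s'(t)U_{c^*}'$, so it is a subsolution only when $\gamma\le 1$. The advance case $\gamma>1$ needs another device.
\end{itemize}
Your Step~3 (the Liouville argument) is standard once these bounds hold.
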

As pointed out in \cite{alfaro2024bramson}, the sign of the logarithmic correction depends on the parameter $\gamma$. When $\gamma>1$, the front position exhibits a logarithmic advance.
When $-2<\gamma<1$, the front position exhibits a logarithmic delay. In the borderline case $\gamma=1$, the logarithmic correction vanishes, which means that the right tail of the initial data is comparable to that of the traveling front. For the critical case $\gamma=-2$, the correction is no longer purely logarithmic; an $O(\log \log t)$ term appears in the front location.

Beyond the effect of initial data in homogeneous media, recent research has focused on the propagation of fronts in shifting habitats. A natural question is whether the logarithmic delay persists when the front is "forced" by the shifting boundary of the habitat. This leads us to the following discussion.

\medskip

\noindent {\bf Log Delay in Shifting Habitats.}
Motivated by the understanding of the impact of climate change on the persistence and spreading of species 
\cite{berestycki2009can,potapov2004climate,li2014persistence}, we consider the problem \eqref{e.kpp.gen}, 
where the initial data satisfy 
\bea\label{ic-shift}
\begin{cases}
\red{0\leq u_0\leq 1}\\ 
\liminf\limits_{x\to-\infty}u_0(x)>0, \quad \text{ and }\quad  u_0(x)=0 \quad \text{ for all } x \gg 1,    
\end{cases}
\eea
 and the intrinsic growth rate $r(t,x)$ is piecewise constant in space and is determined by a shifting climate.
 More precisely, for some constants $R_1,R_2 >0$ and $\red{\beta},\eta\in \mathbb{R}$, we set
\bea\label{r-X}
r(t,x):=
\begin{cases}
R_1 &\text{ for }x \leq X(t),\\
R_2 &\text{ for }x > X(t),
\end{cases}
\quad X(t):=\beta t -\eta \log (t+1),\quad t\geq0.
\eea
Here $\beta$ represents the dominant climate shifting speed, while the logarithmic term introduces a correction, corresponding to a delayed ($\eta>0$) or advanced ($\eta<0$) interface relative to the constant-speed shift $\beta t$. In fact, $\eta\neq0$ is motivated by the study in co-invasion dynamics \cite{holzer2014accelerated,girardin2019invasion,liu2021stacked}, where the faster invader may generate a logarithmic shift {\cite{peng2021sharp,wu2023sharp}.

Note that $r(t,x)$ has a jump discontinuity across $x=X(t)$. Nevertheless, 
\eqref{e.kpp.gen} 
is well-posed in the weak sense, while the solution becomes classical and enjoys standard parabolic regularity away from the interface.

\medskip

%
\subsection{Shifting Environments with Favorable Region Ahead}
%
In \cite{LamWu_preprint}, the authors consider the problem 
\eqref{e.kpp.gen} 
with
\begin{align}\label{r-1}
 r(t,x):=
\begin{cases}
  1-a &\text{ for }x \leq X(t),  \\
    1 &\text{ for }x > X(t),
\end{cases}
\quad \mbox{ for some $a\in(0,1)$};   
\end{align}
namely, $R_1=1-a$ and $R_2=1$ in \eqref{r-X}.
In this setting, the region ahead of the invasion front is more favorable, since the intrinsic growth rate is higher there. Consequently, the population is chasing a more suitable habitat ahead of its leading edge, and the interface $x=X(t)$ denotes the left boundary of this favorable region.

It has been shown \cite{holzer2014accelerated,Lam2022asymptotic}  that the spreading speed $c_*$ of 
\eqref{e.kpp.gen} 
exists and is expressed by
\bea\label{c*}
{c}_* =\begin{cases}
2 &\text{ if } \beta \le 2,\\
\dps 
\lambda_*+\frac{1-a}{\lambda_*}
&\text{ if } 2 < \beta < 2(\sqrt a +\sqrt{1-a}),\\
2\sqrt{1-a}=:c_{\min}  &\text{ if }  \beta \geq 2(\sqrt a +\sqrt{1-a}),
\end{cases}
\eea
where the exponent $\lambda_*\in (1-\sqrt{a}, \sqrt{1-a})$ is independent of $\eta$ and is given by
\begin{align}\label{lambda*}
 \lambda_*=\frac{\beta}{2}-\sqrt{a}.
\end{align}

The first two authors of the present paper determined the front location for \eqref{e.kpp.gen}, where $r(t,x)$ is defined in \eqref{r-1}. 
\red{Recall that $X(t):=\beta t -\eta \log (t+1)$.
We consider the following four cases:}
\begin{itemize}
\red{
\item[{\bf(A0)}] (Spreading ahead of environment) 
$(\beta,\eta) \in  (-\infty,2)\times \mathbb{R}$ or $(\beta,\eta) \in \{2\}\times (\tfrac32, \infty)$. 

\item[{\bf(A1)}] (Supercritical pulling) 
   $(\beta,\eta) \in  (2,2(\sqrt{a} + \sqrt{1-a}))\times \mathbb{R}$ or $(\beta,\eta) \in \{2\}\times (-\infty, 1/2)$.
   }
    \item[{\bf(A2)}]  (Critical pulling) $\beta =  2(\sqrt{a} + \sqrt{1-a})$ \red{and $\eta\in\R$}. 
    \item[{\bf(A3)}] (No pulling) $\beta> 2(\sqrt{a}+\sqrt{1-a})$ \red{and $\eta\in\R$}.
\end{itemize}

\red{Let $R\in\{1-a,1\}$.}
Denote by \red{$\Phi_{\lambda,R}$} the traveling wave solution that
satisfies
\begin{equation}\label{TW-B}
\begin{cases}
c_\lambda \Phi' + \Phi'' +\Phi (\red{R} -\Phi) =0\quad \text{ for }z \in \mathbb{R},\\
\Phi(-\infty) =\red{R}, \qquad  \Phi(+\infty) = 0,
\end{cases}
\end{equation}
with a suitable spatial translation such that
\bea\label{TW-AS}
\begin{cases}
\lim\limits_{z \to +\infty} e^{\lambda z}\Phi(z) = 1,& \quad \text{ if } c_{\lambda}>2\sqrt{\red{R}}\quad (\lambda\in(0,\sqrt{\red{R}}\,)),\\
\lim\limits_{z \to +\infty} z^{-1}e^{\lambda z}\Phi(z) = 1,& \quad \text{ if } c_{\lambda}=2\sqrt{\red{R}}
\quad (\lambda=\sqrt{\red{R}}\,),
\end{cases}
\eea
where $c_\lambda:=\lambda+\frac{\red{R}}{\lambda}$, and write
$\Phi_{\min,\red{R}}:=\Phi_{\red{\sqrt{R}},\red{R}}$.

We have the following result.

\begin{theorem}[\cite{LamWu_preprint}]\label{thm:lamwu}
Let $u(t,x)$ be the solution to 
\eqref{e.kpp.gen} with initial data satisfying \eqref{ic-shift}. Then the following hold:
\begin{itemize}
  \red{ \item[(i)] Assume that {\bf(A0)} holds. Then there exists a bounded function $\Lambda(t)$ such that for any $\delta>0$, 
\begin{align*}
\sup_{x \geq X(t) + \delta \log t }\Big|u(t,x) - \Phi_{\min,1} \Big(x-2 t +\frac{3}{2}  \log t+\Lambda(t)\Big)\Big| \to 0 \text{ as } t\to\infty.    
\end{align*}
}
    \item[(ii)] Assume that {\bf(A1)} holds. 
Then there exists a bounded function $\Lambda(t)$ such that 
\begin{align*}
    \sup_{x\in \R }\Big|u(t,x) - \Phi_{\lambda_*,\red{1-a}}\Big(x-c_* t +\frac{1}{\lambda_*}\left(\frac{3}{2} - \sqrt{a}\eta\right) \log t+\Lambda(t)\Big)\Big| \to 0 \text{ as } t\to\infty,
\end{align*}
where $\lambda_*$ is defined in \eqref{lambda*}.
 \item[(iii)] Assume that {\bf(A2)} holds. 
 Set $q=-\tfrac32 + \eta \sqrt a$. Then there exists a bounded function $\Lambda(t)$ such that 
\begin{align*}
\sup_{x\in \R }\Big|u(t,x) - \Phi_{\min,\red{1-a}}\Big(x-{m}_q(t)+\Lambda(t)\Big)\Big| \to 0 \text{ as } t\to\infty,
\end{align*}
where \begin{equation}\label{mq}
{m}_q(t) := 
\begin{cases}
c_{\min} t - \frac{3}{2\lambda_{\min}} \log t , &\text{ if }q< -2,\\
c_{\min} t - \frac{3}{2\lambda_{\min}} \log t + \frac{1}{\lambda_{\min}} \log \log t &\text{ if }q= -2,\\
c_{\min} t  + \frac{q-1}{2\lambda_{\min}} \log t &\text{ if }q > -2.
\end{cases}
\end{equation}
\red{Here $c_{\min}=2\sqrt{1-a}$ and $\lambda_{\min}=\sqrt{1-a}$.}
\item[(iv)] Assume that {\bf(A3)} holds. 
Then there exists a bounded function $\Lambda(t)$ such that 
\begin{align*}
\sup_{x\in\R}  \Big|u(t,x )  -\Phi_{\min,\red{1-a}}\Big(x- c_{\min} t+ \frac{3}{2\lambda_{\min}}\log t + \Lambda(t)\Big) \Big|  \to 0\quad \text{ as }t\to\infty,
\end{align*}
\red{where $c_{\min}=2\sqrt{1-a}$ and $\lambda_{\min}=\sqrt{1-a}$.}
\end{itemize}
\end{theorem}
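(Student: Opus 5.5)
The plan is to prove each of the four regimes by building explicit super- and subsolutions in a suitable moving frame, then upgrading the bounds to convergence to a shifted traveling wave in the spirit of \cite{hamel2013short}. In every case we first fix the candidate front position $m(t)$ stated in the theorem. We then construct a supersolution $\overline u$ and a subsolution $\underline u$ of \eqref{e.kpp.gen}, with $r$ given by \eqref{r-1}, such that
\[
\underline u(t,x)\le u(t,x)\le \overline u(t,x)
\qquad\text{and}\qquad
\overline u,\ \underline u=\Phi\big(x-m(t)+O(1)\big)
\]
near the front. Here $\Phi$ is the relevant wave $\Phi_{\min,1}$, $\Phi_{\lambda_*,1-a}$ or $\Phi_{\min,1-a}$. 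This gives $|\xi_\delta(t)-m(t)|=O(1)$. Convergence of the profile modulo a bounded shift $\Lambda(t)$ then follows by a Liouville-type argument. Take a sequence $t_n\to\infty$ and pass to limits of $u(t+t_n,x+m(t_n))$. The limit is an entire solution trapped between two shifts of $\Phi$ and lying behind the interface. It moves at speed $c$ and, because of the $O(1)$ sandwich, has the same exponential tail as $\Phi$. It is therefore a translate of $\Phi$, by uniqueness of such entire solutions for homogeneous KPP (cf.\ \cite{Hamel1999entire,hamel2013short}).

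The linear analysis in the leading edge determines $m(t)$, and this is where the cases separate.

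In (A0) the interface falls behind the $\log t$-corrected KPP front. Ahead of $X(t)+\delta\log t$ we compare with the homogeneous problem with $r\equiv1$, using a Dirichlet-type condition at the interface. The Bramson analysis of \cite{hamel2013short} then applies essentially verbatim. The condition $\eta>3/2$ at $\beta=2$ is exactly what keeps the interface behind $2t-\tfrac32\log t$.

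In (A1) we work in the moving frame $z=x-X(t)$ and linearize around $0$. In the region $z>0$ (where $r=1$), the solution behind the interface is pulled by an exponential mode $e^{-\lambda_*(x-c_*t)}$. We match this mode to the heat-kernel-type profile on $z>0$, which carries the Gaussian factor $t^{-3/2}$ coming from the Dirichlet-like absorption that $z$ feels. The $-\eta\log(t+1)$ in $X(t)$ contributes a factor $e^{\sqrt a\,\eta\log t}$. Balancing these gives the shift $\frac1{\lambda_*}\big(\tfrac32-\sqrt a\,\eta\big)\log t$. Concretely, the supersolution on $z>0$ will be
\[
\min\Big\{1,\ C\,t^{-3/2}\,z\,e^{-\frac{\beta}{2}z-\frac{z^2}{4t}}\,e^{(1-\beta^2/4)t}\Big\}\quad\text{(suitably modified)},
\]
glued across $z=0$ to $\Phi_{\lambda_*,1-a}\big(x-c_*t+\frac1{\lambda_*}(\tfrac32-\sqrt a\eta)\log t+C\big)$, with small corrections of order $t^{-1}$ to absorb errors. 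The subsolution mirrors this with compactly supported sine-type pieces, as in the first proof of Theorem~\ref{t.0312.1}.

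In (A2), at $\beta=2(\sqrt a+\sqrt{1-a})$ we have $\lambda_*=\lambda_{\min}$. The tail that the favorable region feeds back behaves like $x^{\gamma}e^{-\lambda_{\min}x}$ with an effective $\gamma$ determined by $q=-\tfrac32+\eta\sqrt a$. We then invoke the trichotomy of \cite{alfaro2024bramson}, rescaled to $r=1-a$, which produces $m_q(t)$ including the $\log\log t$ case $q=-2$.

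In (A3) the favorable region runs too fast to influence anything. The solution is governed by the $r=1-a$ problem with compactly supported-type tail, so Bramson's $\tfrac{3}{2\lambda_{\min}}\log t$ holds by direct comparison.

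The main obstacle is the gluing across the moving discontinuity $x=X(t)$ in (A1) and (A2). The super/subsolutions must satisfy the correct flux inequality $\partial_x\overline u(X^-)\ge\partial_x\overline u(X^+)$, and the reverse for $\underline u$, while keeping errors of order $t^{-1}$ summable so that the $\log t$ coefficient is exact. I would first try a self-similar variable $\zeta=z/\sqrt t$ on the favorable side. There we solve the linear problem with a Robin-type matching condition at $\zeta=0$ exactly, and then perturb by a factor $(1\pm t^{-1/2+\epsilon})$ to absorb the nonlinearity and the logarithmic drift of $X(t)$.
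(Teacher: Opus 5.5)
This survey does not prove the theorem. It quotes it from \cite{LamWu_preprint}, where it rests on refined super/subsolution constructions, so your global strategy is the expected one. Your heuristics are also right. The killing on $z<0$ acts as a Robin-type absorbing boundary for the favorable side, which gives $t^{-3/2}$. The drift $-\frac{\eta}{t+1}\partial_z$ gives $t^{\sqrt a\eta}$. In (A2) the interface deposits a tail $y^{q}e^{-\lambda_{\min}y}$, $y=x-c_{\min}t$, so the effective $\gamma$ is exactly $q$. However, the steps that actually fix the $\log t$ coefficients are either missing or fail as stated.

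In (A2) you cannot invoke \cite{alfaro2024bramson}. That theorem treats a homogeneous Cauchy problem whose tail is present at $t=0$. Here the tail is laid down progressively: at time $s$ it only reaches $y\approx 2\sqrt a\,s$. Moreover, $u$ is only a \emph{super}solution of the $r\equiv 1-a$ problem. Comparison from $u_0$ therefore gives only $c_{\min}t-\frac{3}{2\lambda_{\min}}\log t$. This is off by $\frac{q+2}{2\lambda_{\min}}\log t$ whenever $q>-2$; already for $\eta=0$ the answer is $c_{\min}t-\frac{5}{4\lambda_{\min}}\log t$. Restarting from $u(t_0,\cdot)$ does not help either, since that datum carries the tail only up to $y\approx 2\sqrt a\,t_0$, so the constants are not uniform in $t_0$. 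The Bramson/Alfaro--Giletti--Xiao analysis, including the $\log\log t$ case $q=-2$, has to be redone with a moving source at $X(t)$. Likewise, in (A3) only the lower bound follows by comparison ($r\ge 1-a$). The only direct upper comparison is with $r\equiv 1$, whose front sits at $2t-\frac32\log t$. You need a barrier across $X(t)$ showing that the mode emitted there, with decay $\lambda_*>\lambda_{\min}$, stays essentially confined to $x\ge 2\lambda_* t$, away from the Bramson zone.

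In (A1), which is the core, the barriers you describe do not work as written.
\begin{itemize}
\item The displayed supersolution vanishes at $z=0$, so it cannot be glued to the positive branch behind $X(t)$. You need a Robin profile $z+1/\sqrt a$. For that profile both one-sided slopes of $\log u$ equal $-\lambda_*$, so the kink inequality must be produced by a perturbation. The displayed function also lacks the amplitude $(t+1)^{\eta\beta/2}$ generated by the drift.
\item Behind the interface, set $m(t)=c_*t-\frac{\kappa}{\lambda_*}\log t$ with $\kappa=\frac32-\sqrt a\eta$. Then $\Phi=\Phi_{\lambda_*,1-a}(x-m(t))$ satisfies $\Phi_t-\Phi_{xx}-\Phi(1-a-\Phi)=\frac{\kappa}{\lambda_* t}\Phi'<0$ for $\kappa>0$, so the delayed wave is a \emph{sub}solution. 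Its residual $\approx-\frac{\kappa}{t}\Phi$ in the tail accumulates along the characteristics of the linearized tail equation, which run back from the interface at speed $c_*-2\lambda_*$. It cannot be absorbed by $1+O(t^{-1})$ corrections. You need a transport factor like $\big(1+\frac{x-m(t)}{(c_*-2\lambda_*)t}\big)^{-\kappa}$, which is of order one near $X(t)$.
\item On the subsolution side, fixed-width sine bumps as in the first proof of Theorem~\ref{t.0312.1} decay like $e^{-(\beta^2/4-1+R^{-2})t}$ on the favorable side. That is exponentially too small, so the lower barrier must live on the diffusive scale $z\sim\sqrt t$, as in \cite{hamel2013short}.
\item In the subcase $\beta=2$ the favorable side is only polynomially small: $u(t,X(t))\approx t^{\eta-3/2}$. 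Absorbing $\underline u^{2}$ there requires a relative correction of size $t^{\eta-1/2}$, which is small only when $\eta<\frac12$. Your plan never uses this hypothesis.
\end{itemize}
Finally, in (A0), (A2) and (A3) Bramson-type barriers are not translates of $\Phi_{\min,R}$, so a sandwich between two shifts of $\Phi$ is not available there. Your Liouville step must instead use the form in \cite{hamel2013short}: an upper bound $C(1+s_+)e^{-\lambda s}$, $s=x-m(t)$, ahead of the front, together with a lower bound near it.
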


\red{

The profile convergence in the regime
$\beta=2$, $\frac12\le\eta\leq \frac32$
remains open. We conjecture that the conclusion (ii) of Theorem~\ref{thm:lamwu} continues to hold when $\beta=2$ and $\frac12\le\eta<\frac32$.
The case $(\beta,\eta)=(2,\frac{3}{2})$ is marginal since the interface $X(t)$ and the classical KPP front have the same logarithmic location. In this case, the level-set location is established (\cite[Theorem 1.3]{LamWu_preprint}): 
\begin{align*}
 \xi_{\delta}(t)={2 t -\frac{3}{2}\log t+ O(1)}.    
\end{align*}
However, the corresponding profile convergence problem remains open.

}



These results establish refined asymptotic descriptions of the invasion front and connect Hamilton--Jacobi theory with finer front dynamics.
In addition, when $\eta = 0$, i.e. $X(t)  =\beta t$ for some $\beta>0$, 
Theorem~\ref{thm:lamwu} \red{gives}
$$
\xi_{\delta}(t) = \begin{cases}
{2 t -\frac{3}{2}\log t+ O(1)} & {\text{ if } \beta<2,} \\
    c_* t - \frac{3}{2\lambda_*}\log t + O(1)  &\text{ if }2 \leq  \beta <2(\sqrt a + \sqrt{1-a}),\\
    c_{\min}t - {\frac{5}{4 \lambda_{\min}} \log t}+ O(1)&\text{ if }\beta = 2(\sqrt a + \sqrt{1-a}),\\
    c_{\min}t - \frac{3}{2 \lambda_{\min}} \log t+ O(1)  &\text{ if }\beta > 2(\sqrt a + \sqrt{1-a}),
\end{cases}
$$
where 
$$
\lambda_* = \frac{\beta}{2}-\sqrt{a}, \quad c_* =  \lambda_* + \frac{1-a}{\lambda_*},\quad \lambda_{\min}=\sqrt{1-a},\quad c_{\min} = 2\sqrt{1-a}.
$$
We remark that a new Bramson correction term 
\[-\frac{5}{4\lambda_{\min}} \log t\]
appears in the transition case $\beta= 2(\sqrt a + \sqrt{1-a})$.

\medskip

\subsection{Shifting Environments with Worse Region Ahead}\label{ss:worse} 
%
%
%
We consider the problem 
\eqref{e.kpp.gen} 
with
\beaa
r(t,x):=
\begin{cases}
  1 &\text{ for }x \leq X(t),  \\
    1-a &\text{ for }x > X(t),
\end{cases}
\quad \mbox{ for some $a\in(0,1)$};
\eeaa
namely, $R_1=1$ and $R_2=1-a$ in \eqref{r-X}.
This means that the region ahead of the invasion front is less favorable.

Previous work shows that (\cite{berestycki2018forced,holzer2014accelerated,Lam2022asymptotic}) that the spreading speed $c_*$ of 
\eqref{e.kpp.gen} exists and is expressed by
\bea\label{c*2}
{c}_* =\begin{cases}
2\sqrt{1-a} &\text{ if } \beta \le 2\sqrt{1-a},\\
\beta &\text{ if } 2\sqrt{1-a} < \beta  \le 2,\\
2  &\text{ if }  \beta> 2.
\end{cases}
\eea

We next derive asymptotic bounds for the front location of \eqref{e.kpp.gen} .

\begin{theorem}\label{thm:less-favorable}
Let \red{$\delta\in(0,1-a)$} be given. Then there exists $C\geq0$ and $T>0$ such that the following hold:
\begin{itemize}
\item[{\rm(i)}] If $ \beta<2\sqrt{1-a}$ and $\eta\in\R$, then  for $t\geq T$,
\[
2\sqrt{1-a}t-\frac{3}{2\lambda_{\min}}\log t-C \leq \xi_\delta (t)\leq 2\sqrt{1-a}t-\frac{3}{2\lambda_{\min}}\log t+C 
\]  
where $\lambda_{\min}:=\sqrt{1-a}$.
\item[{\rm(ii)}]  If $2\sqrt{1-a} < \beta<2$ and $\eta\in \R$, then
\begin{equation*}
 \xi_\delta(t)\le \beta t-\eta\log(t+1)+C,
\quad  \mbox{for $t\geq T$}.
\end{equation*}
\item[(iii)] If $0<\beta<2$ and $\eta\geq 0$, then  
\begin{equation*}
 \xi_\delta(t)\ge \beta t-\eta\log(t+1)-C,
\quad  \mbox{for $t\geq T$}.
\end{equation*}
\end{itemize}
\end{theorem}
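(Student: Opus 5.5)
The plan is to prove the three parts separately, each by comparison with explicit super- or subsolutions. Throughout we use that $f(t,x,u)=(r(t,x)-u)u$ is nondecreasing in $r$ and that $1-a\le r\le 1$.

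\emph{Part (iii): lower bound for $0<\beta<2$, $\eta\ge0$.} Here $X'(t)=\beta-\eta/(t+1)\in(0,\beta]$ for large $t$, and $X''(t)=\eta/(t+1)^2=O(t^{-2})$.
\begin{itemize}
\item Fix $L,R>0$ and use a compactly supported window that travels with the interface and stays where $r=1$:
\[
\underline u(t,x)=\epsilon\, e^{-\frac{X'(t)}{2}(x-X(t)+L)}\sin\!\Big(\frac{x-X(t)+L}{R}\Big)\quad\text{on } 0<\frac{x-X(t)+L}{R}<\pi,
\]
and $\underline u=0$ elsewhere. We take $\pi R<L$ so that the support lies in $\{x\le X(t)\}$.
\item This is the construction used in the first proof of Theorem~\ref{t.0312.1}. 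It is a generalized subsolution as soon as $1>\beta^2/4+3/R^2$, which is possible since $\beta<2$. The terms involving $X''$ and $t$-dependent exponents are $O(1/t)$ and are absorbed for $t\ge T_R$.
\item The sign $\eta\ge0$ keeps $X'\le\beta$, so the window never has to move faster than $\beta<2$.
\item The initial ordering at time $T_R$ holds because $u(T_R,\cdot)$ is bounded below on compacts. This gives $u\ge\epsilon'$ on $[X(t)-L',X(t)-L'']$.
\item To upgrade from $\epsilon'$ to $\delta$ near $X(t)-C$, argue by contradiction. Suppose $u(t_n,X(t_n)-C_n)<\delta$ with $C_n$ bounded. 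By parabolic regularity, pass to a limit entire solution in the moving frame; it sits in $r\equiv1$ on the relevant half-line and is bounded below by $\epsilon'$ on an interval. A Liouville/persistence argument (as in \eqref{e.0312.4c}) then forces it to be at least $1-o(1)$ behind a bounded distance, which is a contradiction for $C$ large.
\end{itemize}

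\emph{Part (ii): upper bound for $2\sqrt{1-a}<\beta<2$.} Take
\[
\bar u=\min\{1,\;Ce^{-\mu(x-X(t))}\}.
\]
\begin{itemize}
\item The constant $1$ is a supersolution because $r\le1$.
\item The exponential is a supersolution in $\{x>X(t)\}$, where $r=1-a$, provided $\mu X'(t)\ge\mu^2+1-a$.
\item Since $\beta>2\sqrt{1-a}$, the quadratic $\mu^2-\beta\mu+1-a$ has two positive roots. Choosing $\mu$ strictly between them gives a strict inequality, whose slack absorbs the $O(1/t)$ term $\eta/(t+1)$ in $X'$ for $t\ge T$.
\item With $C\ge1$ the kink $x=X+\mu^{-1}\log C$ lies in the region $r=1-a$. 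Hence the minimum of the two supersolutions is a generalized supersolution.
\item Because $u_0$ vanishes for $x\gg1$, the initial data are ordered once $C$ is large.
\item Then $u<\delta$ for $x>X(t)+\mu^{-1}\log(C/\delta)$, which is the claimed bound.
\end{itemize}

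\emph{Part (i): $\beta<2\sqrt{1-a}$.}
\begin{itemize}
\item \emph{Lower bound.} Compare with the homogeneous equation $u_t=u_{xx}+(1-a-u)u$, using initial data $\min\{u_0,1-a\}$, which is still compactly supported on the right and positive at $-\infty$. Bramson's result \cite{bramson1983convergence,hamel2013short} gives $\xi_\delta\ge c_{\min}t-\frac{3}{2\lambda_{\min}}\log t-C$.
\item \emph{Upper bound: splitting.} Fix $\gamma\in(\beta,c_{\min})$. In $\{x>\gamma t\}$ we have $r=1-a$ for large $t$, and $u\le1$ on the line $x=\gamma t$. So it suffices to bound the solution of the KPP equation with rate $1-a$ in the moving half-line $x>\gamma t$ with boundary value $1$.
\item \emph{Upper bound: supersolution.} Adapt the Hamel--Nolen--Roquejoffre--Ryzhik construction \cite{hamel2013short}. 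In the frame $z=x-c_{\min}t+\frac{3}{2\lambda_{\min}}\log t$, glue three pieces:
  \begin{itemize}
  \item a self-similar heat-kernel-type piece $z\,e^{-\lambda_{\min}z}e^{-z^2/(4t)}$ for $z\gtrsim\sqrt t$;
  \item a traveling-wave-type piece $Ce^{-\lambda_{\min}z}(z+K)$ near the front;
  \item the constant $1$.
  \end{itemize}
\item \emph{Boundary line.} On $x=\gamma t$ we have $z\approx-(c_{\min}-\gamma)t$, so the exponential factor is huge and the supersolution exceeds $1$ there.
\end{itemize}

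The main obstacle is this last construction: ensuring that the boundary at $\gamma t$, which moves linearly slower than the front, does not perturb the $\frac{3}{2\lambda_{\min}}\log t$ delay. I would first try to reduce it to the known Dirichlet-half-line result of \cite{hamel2013short}, by shifting to the frame of $\gamma t$ and using that the distance to the front grows linearly.
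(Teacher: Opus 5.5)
You have a genuine gap in the upper bound of part (i). Your lower bound in (i), and your reduction of the upper bound to the half-line $\{x>\gamma t\}$ (where $r=1-a$ for large $t$ and $u\le1$ on the boundary), are essentially what the paper does. But you never produce a supersolution on that half-line, and the three-piece construction you sketch fails as written.

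With $u=e^{-\lambda_{\min}z}p$ in the frame $z=x-c_{\min}t+\frac{3}{2\lambda_{\min}}\log t$, the linearized equation reads $p_t=p_{zz}-\frac{3}{2\lambda_{\min}t}p_z+\frac{3}{2t}p$. For $p=ze^{-z^2/(4t)}$ one gets
\[
p_t-p_{zz}+\frac{3}{2\lambda_{\min}t}\,p_z-\frac{3}{2t}\,p=\frac{3}{2\lambda_{\min}t}\Bigl(1-\frac{z^2}{2t}\Bigr)e^{-z^2/(4t)}<0\qquad\text{for }z>\sqrt{2t}.
\]
So your heat-kernel piece is a strict subsolution of the linearized problem, hence not a supersolution, exactly in the region $z\gtrsim\sqrt t$ where you intend to use it. Nor can you simply invoke Hamel--Nolen--Roquejoffre--Ryzhik. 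Their half-line linear analysis is a step inside their proof, with the boundary at the log-shifted front. It is not a stand-alone result for a boundary lagging linearly behind the front.

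The missing idea is that no Bramson-type construction is needed. Take $\bar c\in(\beta,c_{\min})$ and $L$ with $\bar ct+L>X(t)$ for all $t\ge0$. Let $v$ solve $v_t=v_{xx}+v(1-a-v)$ on all of $\mathbb{R}$ with $v(0,\cdot)=\mathbf{1}_{(-\infty,A]}\ge u_0$, where $A>L$. Since $\bar c<c_{\min}$, the line $x=\bar ct+L$ lies inside the invaded zone of $v$, so $\delta_0:=\inf_{t\ge0}v(t,\bar ct+L)\in(0,1)$. Then $\bar u:=v/\delta_0$ satisfies:
\begin{itemize}
\item $\bar u_t-\bar u_{xx}-\bar u(1-a-\bar u)=\frac{1-\delta_0}{\delta_0^2}v^2\ge0$;
\item $\bar u\ge1\ge u$ on the line $x=\bar ct+L$;
\item $\bar u(0,\cdot)\ge u_0$.
\end{itemize}
Comparison on $\{x\ge\bar ct+L\}$ gives $\xi_\delta(t)\le\max\{\bar ct+L,\tilde\xi_{\delta_0\delta}(t)\}$, where $\tilde\xi_{\delta_0\delta}$ is the $\delta_0\delta$-level set of $v$. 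Bramson's asymptotics for $v$ then finish the upper bound.

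Your parts (ii) and (iii) are sound and take different routes from the paper.

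In (ii), take $C>1$, so the kink sits strictly inside $\{r=1-a\}$, and $\mu$ strictly between the roots of $\mu^2-\beta\mu+1-a$. Then $\min\{1,Ce^{-\mu(x-X(t))}\}$ is a valid generalized supersolution from a large time $T$ on. You must order it against $u(T,\cdot)$ using its Gaussian tail, not against $u_0$. This is simpler than the paper's gluing of a forced wave $U_\beta$ (for a smoothed $\tilde g\ge g$) with $M_2e^{-\mu y}$. It needs neither $\beta<2$ nor the Berestycki--Fang existence theorem. With $\mu=\sqrt{1-a}$ it also covers the corollary's case $\beta=2\sqrt{1-a}$, $\eta=0$ directly.

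In (iii), your moving window with exponent $X'(t)/2$ is indeed a subsolution; the $X''$ term even has a favourable sign when $\eta\ge0$. However, the upgrade from $\epsilon'$ to $\delta$ is misstated. Negating the claim gives $t_n\to\infty$ with $u(t_n,\cdot)<\delta$ on $[X(t_n)-n,\infty)$. The Liouville argument then needs a positive lower bound on a whole half-line behind $X(t)$, not on a single interval.

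The paper's subsolution provides this directly. It is a plateau $\kappa\phi(y_*)$ glued to the decreasing half of $e^{-\beta y/2}\sin(y/\ell)$, and $\eta\ge0$ enters through $\frac{\eta}{t+1}\phi'\le0$. With your window you can recover the half-line bound by placing the window at every $L\ge L_0$ with a single $\epsilon$. This is possible because $\liminf_{x\to-\infty}u_0>0$ gives $\inf_{x\le K}u(T_R,x)>0$ for every $K$.
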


\begin{corollary}
Let \red{$\delta\in(0,1-a)$} be given. 
If $2\sqrt{1-a}\leq  \beta <2$ and $\eta=0$, then there exists $C\geq0$ and $T>0$ such that
\begin{equation}\label{cor-ineq}
\beta t-C \le \xi_\delta(t)\le \beta t+C,
\quad  \mbox{for $t\geq T$}. 
\end{equation}
\end{corollary}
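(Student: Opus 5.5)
The corollary is a direct specialization of Theorem \ref{thm:less-favorable} to $\eta=0$, so the plan is to read off the two inequalities from parts (ii) and (iii) and then treat the endpoint $\beta=2\sqrt{1-a}$ separately.

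\textbf{Interior case $2\sqrt{1-a}<\beta<2$, $\eta=0$.}
Here $X(t)=\beta t$.
\begin{itemize}
\item Part (ii) applies for every $\eta\in\mathbb{R}$, hence for $\eta=0$. It gives $\xi_\delta(t)\le \beta t+C$ for $t\ge T$.
\item Part (iii) requires $0<\beta<2$ and $\eta\ge0$, both of which hold. It gives $\xi_\delta(t)\ge \beta t-C$.
\end{itemize}
Take $C$ to be the larger of the two constants and $T$ the larger of the two times. This yields \eqref{cor-ineq}.

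\textbf{Endpoint $\beta=2\sqrt{1-a}$.}
This case needs more care, because Theorem \ref{thm:less-favorable}(ii) is stated only for $\beta>2\sqrt{1-a}$.
\begin{itemize}
\item The lower bound still follows from (iii), since $0<2\sqrt{1-a}<2$ and $\eta=0$.
\item For the upper bound, I would revisit the supersolution behind (ii). The natural candidate is the forced-wave-type supersolution $\overline u(t,x)=\min\{1, Ke^{-\lambda(x-\beta t)}\}$ with $\lambda$ chosen so that $\lambda\beta\ge \lambda^2+1$ on the left and $\lambda\beta\ge\lambda^2+1-a$ on the right.
\item The binding constraint is $\lambda\beta-\lambda^2\ge 1$, which requires $\beta\ge 2$. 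That fails here, so the actual construction in (ii) must exploit the interface.
\end{itemize}

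\textbf{Expected form of the construction in (ii).} I expect it to be a piecewise supersolution:
\begin{itemize}
\item it equals $1$ behind $X(t)$, which is admissible since $r\le1$ everywhere;
\item ahead of $X(t)$ it decays like $e^{-\lambda(x-X(t))}$ with $\lambda^2-\beta\lambda+1-a\le 0$, i.e. $\lambda\in[\lambda_-,\lambda_+]$;
\item the two pieces are glued with the appropriate one-sided derivative condition at the kink.
\end{itemize}
At $\beta=2\sqrt{1-a}$ the admissible $\lambda$ still exists, namely $\lambda=\sqrt{1-a}$, as a double root. The same construction should therefore go through, possibly with an extra polynomial factor $(x-\beta t+K)e^{-\lambda(x-\beta t)}$ in the degenerate case.

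\textbf{Main obstacle.} The hard step is checking that this degenerate case still gives an $O(1)$ bound rather than a $\log t$ loss. The competing comparison is the level set of the homogeneous $1-a$ problem, which by (i)'s mechanism lies at $2\sqrt{1-a}\,t-\frac{3}{2\lambda_{\min}}\log t$. This is below $\beta t$, consistent with the upper bound $\beta t+C$.

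If reproducing the construction proves delicate, I would instead argue by monotonicity in $\beta$. Since the favorable region $\{r=1\}$ is $\{x\le\beta t\}$, comparison gives that $u$ is nondecreasing in $\beta$. Hence $\xi_\delta$ for $\beta=2\sqrt{1-a}$ is at most $\xi_\delta$ for any $\beta'>2\sqrt{1-a}$, which is at most $\beta' t+C$. This yields only $\beta t+o(t)$, not an $O(1)$ error, so the direct supersolution is really needed at the endpoint.
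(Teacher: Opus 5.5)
Your decomposition is the same as the paper's. The interior case $2\sqrt{1-a}<\beta<2$ is read off from Theorem~\ref{thm:less-favorable}(ii) and (iii), and the endpoint lower bound also comes from (iii). The endpoint upper bound comes from adapting the supersolution of (ii), which the paper only says ``can be suitably modified''. For reference, the paper's supersolution for (ii) is not the constant $1$ behind the interface. It is $M_1U_\beta$ glued to $M_2e^{-\mu y}$, with $\mu$ strictly between the two roots of $\mu^2-\beta\mu+1-a$. That open interval is empty at $\beta=2\sqrt{1-a}$, which is why a modification is needed.

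The weak point is the endpoint upper bound, which you leave as the ``main obstacle'' after discarding the natural candidate for a wrong reason. Take $\bar u=\min\{1,Ke^{-\lambda(x-\beta t)}\}$ with $K\ge 1$. The exponential branch is active only where $x-\beta t>\lambda^{-1}\log K\ge 0$, i.e.\ where $r=1-a$. Elsewhere $\bar u\equiv 1$, which is a supersolution because $r\le 1$. So the constraint $\lambda\beta\ge\lambda^2+1$ never enters, and the candidate you rejected is exactly the construction you then ``expect''. At $\beta=2\sqrt{1-a}$, $\lambda=\sqrt{1-a}$, the function $\phi=Ke^{-\lambda(x-\beta t)}$ satisfies
\[
\phi_t-\phi_{xx}-\phi(1-a-\phi)=\bigl(\lambda\beta-\lambda^2-(1-a)\bigr)\phi+\phi^2=\phi^2\ge 0 .
\]
The nonlinear term alone gives the supersolution property, so no polynomial factor is needed. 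Take $A>0$ with $u_0=0$ on $[A,\infty)$ and set $K=e^{\lambda A}$. Then $u_0\le\bar u(0,\cdot)$, and comparison gives $\xi_\delta(t)\le\beta t+A+\lambda^{-1}\log(1/\delta)$ for all $t\ge 0$.

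Since $\bar u$ is a profile moving at exactly speed $\beta$, no $\log t$ loss can occur. The Bramson level set of the homogeneous $(1-a)$-problem bounds $u$ from below (as $r\ge 1-a$), not from above, so it is not a competing constraint.

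This is also where $\eta=0$ is used. In the moving frame $y=x-\beta t+\eta\log(t+1)$, the exponential picks up an extra term $-\frac{\eta\lambda}{t+1}\phi$. For $\eta>0$ this term cannot be absorbed once the linear margin $\lambda\beta-\lambda^2-(1-a)$ is zero.
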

\begin{proof}
\red{The argument of (ii) in Theorem~\ref{thm:less-favorable} can be suitable modified to cover the case $\beta=2\sqrt{1-a}$ and $\eta=0$.}
Combining (iii)  in Theorem~\ref{thm:less-favorable}, we obtain \eqref{cor-ineq}.
\end{proof}
\begin{remark}
\blue{In \cite[Theorem 1.5(iii)]{berestycki2018forced}, the convergence to the minimal forced traveling wave is established.  }
\end{remark}

\begin{problem}
The behavior of $\xi_\delta(t)$ remains open in the critical case $\beta=2\sqrt{1-a}$ with $\eta\neq 0$, as well as in the case $\beta\geq 2$. In case $\beta = 2$, whether the level set remains in bounded distance with $X(t)$ depends on the value of $\eta$.
\end{problem}

\medskip

\red{
To prove Theorem~\ref{thm:less-favorable}, we need several lemmas.
We first treat assertion (i) of Theorem~\ref{thm:less-favorable}.
When $\beta<2\sqrt{1-a}$, the moving interface $x=\beta t-\eta\log(t+1)$ moves strictly slower than the minimal KPP speed  $2\sqrt{1-a}$ in the less favorable region. So the large time level set is governed by the homogeneous Fisher--KPP dynamics with growth rate $1-a$. 
Hence, part (i) of Theorem~\ref{thm:less-favorable} follows from a comparison argument combined with the
classical level-set asymptotics for the homogeneous Fisher--KPP equation \cite{bramson1983convergence,hamel2013short}.
}

\red{
\begin{lemma}\label{lem:small-beta}
Let $\beta<2\sqrt{1-a}$ and $\delta\in(0,1-a)$ be given. Then there exists $T>0$ and $C>0$ such that 
\bea\label{beta-small}
2\lambda_{\min} t-\frac{3}{2\lambda_{\min}}\log t-C \leq \xi_\delta (t)\leq 2\lambda_{\min}t-\frac{3}{2\lambda_{\min}}\log t+C\quad  \mbox{for $t\geq T$},
\eea
where $\lambda_{\min}:=\sqrt{1-a}$.
\end{lemma}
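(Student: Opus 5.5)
Since $r(t,x)\ge 1-a$ everywhere, the plan is to sandwich $u$ between two homogeneous Fisher--KPP problems and invoke the classical Bramson asymptotics \cite{bramson1983convergence,hamel2013short}. For the lower bound, let $\underline u$ solve $\underline u_t=\underline u_{xx}+\underline u(1-a-\underline u)$ with $\underline u(0,\cdot)=\min\{u_0,1-a\}$. Then $\underline u$ is a subsolution of \eqref{e.kpp.gen}: because $r\ge 1-a$, we have $\underline u(r-\underline u)\ge \underline u(1-a-\underline u)$. Comparison gives $u\ge \underline u$. After rescaling $\underline u/(1-a)$ to the standard KPP form, Bramson's result applied to the level $\delta/(1-a)\in(0,1)$ yields $\xi_\delta(t)\ge 2\lambda_{\min}t-\frac{3}{2\lambda_{\min}}\log t-C$.

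The upper bound is where the favorable region $x\le X(t)$ intervenes. There $u$ may sit near $1$, so a global comparison with the $(1-a)$-problem fails, and the argument must be localized to the region ahead of $X(t)$. Fix $\beta'\in(\max\{\beta,0\},2\lambda_{\min})$; since $\beta<2\sqrt{1-a}$, such a $\beta'$ exists and $X(t)\le \beta' t$ for all large $t$. On the domain $\{x>\beta' t\}$ the equation is $u_t=u_{xx}+u(1-a-u)$, and $u\le 1$ throughout. I would use a supersolution built from the linearized problem in the moving frame $y=x-2\lambda_{\min}t$, namely the shifted Dirichlet heat-kernel type function of Hamel--Nolen--Roquejoffre--Ryzhik,
\[
\overline u(t,x)=K\,(y+L)\,e^{-\lambda_{\min}y}\,t^{-3/2}e^{-(y+L)^2/(4t)}\quad\text{(or a sum of such terms)},
\]
with $K,L$ large, on the half-line $y\ge -(2\lambda_{\min}-\beta')t$. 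Here $(y+L)e^{-\lambda_{\min} y}t^{-3/2}e^{-(y+L)^2/(4t)}$ solves the linearized equation exactly, so it is a supersolution of the nonlinear one by the KPP condition.

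The main obstacle is the left boundary $x=\beta' t$. There the supersolution must dominate $u\le 1$, but the factor $e^{-\lambda_{\min}y}$ with $y\approx-(2\lambda_{\min}-\beta')t$ is exponentially large, of size $e^{\lambda_{\min}(2\lambda_{\min}-\beta')t}$. The Gaussian factor, of size $e^{-(2\lambda_{\min}-\beta')^2t/4}$, could kill it, so the sign of the total exponent has to be checked. The exponent is $\lambda_{\min}\kappa t-\kappa^2 t/4$ with $\kappa=2\lambda_{\min}-\beta'$, and $\kappa<2\lambda_{\min}$ implies $\lambda_{\min}\kappa-\kappa^2/4=\kappa(\lambda_{\min}-\kappa/4)>0$. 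So $\overline u\to\infty$ along the boundary and dominates $1$ for $t\ge T_0$. The initial comparison at $t=T_0$ is handled by choosing $K$ large, using that $u(T_0,\cdot)$ decays like a Gaussian on the right because $u_0$ vanishes for $x\gg1$.

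Alternatively, and more robustly, I would directly compare $u$ on $\{x>\beta't\}$ with the solution $\hat u$ of the homogeneous $(1-a)$-problem whose initial data equal $\max\{u(T_0,\cdot),\mathbf 1_{x\le \beta' T_0}\}$, suitably truncated. I would handle the boundary by noting that $\hat u\to 1-a$ uniformly on $x\le(2\lambda_{\min}-\epsilon)t$, and by replacing $\hat u$ with the solution of $\hat u_t=\hat u_{xx}+\hat u(1-\hat u)$ in the left part via a glued generalized supersolution. Either route gives $\xi_\delta(t)\le 2\lambda_{\min}t-\frac{3}{2\lambda_{\min}}\log t+C$ with $\delta<1-a$. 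This completes \eqref{beta-small}.
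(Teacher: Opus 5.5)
Your lower bound is correct and is the paper's argument: compare with the homogeneous $(1-a)$-problem, then apply Bramson/HNRR at level $\delta/(1-a)$. The upper bound has a genuine gap in both routes.

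\textbf{First route.} The dipole $K(y+L)e^{-\lambda_{\min}y}t^{-3/2}e^{-(y+L)^2/(4t)}$ is \emph{negative} wherever $y<-L$. Your boundary $y=-(2\lambda_{\min}-\beta')t$ lies in that region once $t>L/(2\lambda_{\min}-\beta')$. So along the boundary the function tends to $-\infty$, not $+\infty$: you estimated the exponential factors but dropped the sign of $y+L$. It therefore cannot dominate $u\ge 0$ there, nor at $t=T_0$ on $[-(2\lambda_{\min}-\beta')T_0,-L)$.

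This is not a repairable slip, and sums of such terms do not help. For any positive supersolution $\bar u$ of the linearized equation on $\{x\ge\beta' t\}$, the function $p=e^{\lambda_{\min}y}\bar u$ satisfies $p_t\ge p_{yy}$ on a half-line whose boundary recedes ballistically. The Dirichlet effect is then asymptotically lost, so $p(t,0)\gtrsim t^{-1/2}$. The best such a comparison can give is $\xi_\delta(t)\le 2\lambda_{\min}t-\frac{1}{2\lambda_{\min}}\log t+C$. The sharp $t^{-3/2}$ decay (hence the factor $3$) comes from an absorbing boundary placed at the front itself, where nonlinear information ($u\le 1$) is injected. A purely linear supersolution on $\{x\ge\beta't\}$ cannot capture it.

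\textbf{Second route.} This points in the right direction, but it leaves unresolved the very step you flag as the obstacle. On $x=\beta't$ you only know $u\le 1$, while $\hat u\to 1-a<1$ there, so $\hat u\ge u$ on the boundary is not available. The gluing with the $r\equiv 1$ solution $\tilde u$ also fails:
\begin{itemize}
\item a min-gluing selects $\hat u$ deep inside $\{x\le X(t)\}$ (since $\hat u\approx 1-a<\tilde u\approx 1$), where $\hat u$ is not a supersolution because $r=1$ there;
\item a gluing along $x=\beta't$ would need continuity and a concave kink, neither of which you control.
\end{itemize}

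\textbf{The missing idea is a rescaling.}
\begin{itemize}
\item Pick $\bar c\in(\beta,2\lambda_{\min})$ and $L$ with $\bar ct+L>X(t)$ for all $t\ge 0$. Then pick $A>L$ with $u_0\le\mathbf{1}_{(-\infty,A]}$.
\item Let $v$ solve $v_t=v_{xx}+v(1-a-v)$ with $v(0,\cdot)=\mathbf{1}_{(-\infty,A]}$.
\item Since $\bar c<2\lambda_{\min}$, we have $v(t,\bar ct+L)\to 1-a$. Together with $v(0,L)=1$, this gives $v(t,\bar ct+L)\ge\delta_0>0$ for all $t\ge 0$.
\item Set $\bar u:=v/\delta_0$. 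Then
\[
\bar u_t-\bar u_{xx}-\bar u(1-a-\bar u)=\frac{1-\delta_0}{\delta_0^2}\,v^2\ge 0 .
\]
\item Also $\bar u(0,\cdot)\ge u_0$, and $\bar u\ge 1\ge u$ on $x=\bar ct+L$.
\item Since $r=1-a$ on $\{x\ge \bar ct+L\}$, comparison gives $u\le\bar u$ there. Hence $\xi_\delta(t)\le\max\{\bar ct+L,\tilde\xi_{\delta_0\delta}(t)\}$, where $\tilde\xi_{\delta_0\delta}$ is the $\delta_0\delta$-level set of $v$.
\item Bramson's asymptotics for $v$ at level $\delta_0\delta$ then give the sharp upper bound.
\end{itemize}
All the delicate $\frac{3}{2\lambda_{\min}}\log t$ information is imported from Bramson's theorem for $v$. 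The comparison itself needs only a crude $O(1)$ lower bound on $v$ along the boundary line.
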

\begin{proof}
For the lower bound, let $w$ solve
\[
w_t=w_{xx}+w(1-a-w),\quad t>0,\ x\in\R,\quad w(0,x)=u_0(x),\ x\in\R.
\]
Since $r(t,x)\geq 1-a$, by comparison, 
$u\geq w$ and thus
$\xi_\delta(t)\geq\widetilde\xi_\delta(t)$,
where $\widetilde\xi_\delta(t)$ denotes the $\delta$-level set of $w$. By the classical
Fisher--KPP level-set asymptotics
\cite{bramson1983convergence,hamel2013short},
\[
\widetilde\xi_\delta(t)=2\lambda_{\min}t
-\frac{3}{2\lambda_{\min}}\log t
+O(1),
\]
and the desired lower bound follows.

For the upper bound, choose $\bar c\in (\beta, 2\lambda_{\min})$ and fix $L>0$ such that $\bar c t+L>X(t)$ for all $t\geq 0$. Hence, $r(t,x)=1-a$ for all $x\geq \bar c t+L$. Let $v$ be the solution of
\[
\begin{cases}
v_t=v_{xx}+v(1-a-v),
& t>0,\ x\in\mathbb R,\\
v(0,x)=\mathbf{1}_{(-\infty,A]}(x),
\end{cases}
\]
where $A>L$ is chosen so that
$u_0(x)\le \mathbf{1}_{(-\infty,A]}(x)$ for all $x\in\R$.

Since $\bar c<2\lambda_{\min}$, the spreading result for the homogeneous Fisher--KPP equation yields
$v(t,\bar c t+L)\to 1-a$ as
$t\to\infty$. Also, note that $v(0,L)=1>0$.
Hence, there exists $\delta_0\in(0,1)$ such that $v(t,\bar c t+L)\geq\delta_0$ for all $t\ge0$.

Define
$\bar u:=\frac{1}{\delta_0}v$.
A direct computation gives
\[
\bar u_t-\bar u_{xx}-\bar u(1-a-\bar u)
=
\frac{1-\delta_0}{\delta_0^2}v^2
\geq 0 \ \text{ in } \Omega:=\{(t,x):x\ge\bar c t+L\}.
\]
Furthermore, we have 
$\bar u(0,x)\ge u_0(x)$,
and
\[
\bar u(t,\bar c t+L)
= \frac{v(t,\bar c t+L)}{\delta_0}
\geq 1 \geq  u(t,\bar c t+L),\quad t\geq 0.
\]
Therefore, by the comparison principle, $u(t,x)\le\bar u(t,x)$
for $(t,x)\in\Omega$.

Let $\widetilde\xi_{\kappa}(t)$
denote the $\kappa$-level set of $v$. If $x\ge\bar c t+L$ and 
$u(t,x)\geq\delta$, 
then  $v(t,x)\geq \kappa:=\delta_0\delta$. Hence,
$\xi_\delta(t)\leq \max\{\bar c t+L, \widetilde\xi_{\delta_0\delta}(t)\}$.
Also, from \cite{bramson1983convergence,hamel2013short}, we see that
\[
\widetilde\xi_{\delta_0\delta}(t)
=
2\lambda_{\min}t
-\frac{3}{2\lambda_{\min}}\log t
+O(1).
\]
Using $\bar c<2\lambda_{\min}$, we 
have $\widetilde\xi_{\delta_0\delta}(t)>\bar c t+L$ for all large $t$, which implies that
$\xi_\delta(t)\leq \widetilde\xi_{\delta_0\delta}(t)$ for all large $t$. 
We therefore prove the desired upper bound. This completes the proof.
\end{proof}
}

We 
\red{next} treat assertion (ii) of Theorem~\ref{thm:less-favorable}.

\begin{lemma}\label{lem:middle-beta}
Let $2\sqrt{1-a}<\beta<2$, $\eta\in\R$,
and $\delta\in(0,1-a)$ be given. 
Then there exist $T>0$ and $C>0$ such that
\begin{equation*}
\xi_\delta(t)\le \beta t-\eta\log(t+1)+C,
\qquad t\ge T.
\end{equation*}
\end{lemma}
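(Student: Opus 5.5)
The plan is to build an exponential supersolution in the region ahead of the moving interface. There $r\equiv 1-a$, and since $\beta>2\sqrt{1-a}$, exponential profiles moving at speed $\approx\beta$ are strict supersolutions of the linearized equation. We then read off the level set from the exponential bound.

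\textbf{Step 1 ($u\le 1$).} Since $r\le 1$ and $0\le u_0\le 1$, the constant $1$ is a supersolution of \eqref{e.kpp.gen}. The comparison principle, valid for the weak formulation with bounded discontinuous $r$, gives $0\le u\le 1$ for all $t\ge0$.

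\textbf{Step 2 (choice of the decay rate).} Because $\beta>2\sqrt{1-a}$, the equation $\lambda^2-\beta\lambda+(1-a)=0$ has two real roots $0<\lambda_-<\lambda_+$. Fix $\lambda\in(\lambda_-,\lambda_+)$, so that $\lambda\beta-\lambda^2-(1-a)=:2\kappa>0$. Since $X'(t)=\beta-\eta/(t+1)\to\beta$, there is $T_0>0$ such that
\[
\lambda X'(t)-\lambda^2-(1-a)\ge \kappa>0 \qquad\text{for } t\ge T_0 .
\]
This holds for either sign of $\eta$.

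\textbf{Step 3 (the supersolution).} For a constant $C\ge1$ to be chosen, set
\[
\bar u(t,x):=Ce^{-\lambda(x-X(t))}
\]
on $\Omega:=\{(t,x):\ t>T_0,\ x>X(t)\}$. Inside $\Omega$ we have $r=1-a$ and $-\bar u^2\le 0$, so
\[
\bar u_t-\bar u_{xx}-\bar u(1-a-\bar u)\ge \bigl(\lambda X'(t)-\lambda^2-(1-a)\bigr)\bar u\ge 0 .
\]
The equation is classical there, since the discontinuity of $r$ sits exactly on the lateral boundary.

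\textbf{Step 4 (boundary and initial ordering).} On the lateral boundary $x=X(t)$ we have $\bar u=C\ge 1\ge u$ by Step 1. At $t=T_0$ we need $u(T_0,x)\le Ce^{-\lambda(x-X(T_0))}$ for $x\ge X(T_0)$. Since $u_0=0$ for $x\ge A$ and $u_0\le1$, comparing with $e^{t}\,(G_t*\mathbf 1_{(-\infty,A]})$, where $G_t$ is the heat kernel, shows that $u(T_0,\cdot)$ has Gaussian decay as $x\to+\infty$. Gaussian decay is faster than any exponential, so together with $u\le1$ a large enough $C$ works. The comparison principle on $\Omega$ then yields $u\le\bar u$ in $\Omega$.

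\textbf{Step 5 (conclusion).} Take $t\ge T:=T_0$ and suppose $u(t,x)\ge\delta$. Then either $x\le X(t)$, or $\delta\le Ce^{-\lambda(x-X(t))}$, which means $x\le X(t)+\lambda^{-1}\log(C/\delta)$. Hence
\[
\xi_\delta(t)\le \beta t-\eta\log(t+1)+\lambda^{-1}\log(C/\delta).
\]

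I expect the only delicate point to be Step 4: justifying comparison across a domain bounded by the curved interface where $r$ jumps, and producing an explicit tail bound at $T_0$. Both are handled by the Gaussian heat-kernel supersolution and by the fact that $u\le 1\le C$ on the interface. The sign of $\eta$ plays no role beyond the choice of $T_0$.
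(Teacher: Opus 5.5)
Your proof is correct. It reaches the bound by a leaner route than the paper's.

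The paper works in the moving frame $y=x-X(t)$ and builds a supersolution on the whole line:
\begin{itemize}
\item it smooths $g$ to a non-increasing $\tilde g\ge g$;
\item it invokes Berestycki--Fang to obtain a non-minimal forced wave $U_\beta$ of speed $\beta$;
\item it glues $M_1U_\beta$ (near and behind the interface) to $M_2e^{-\mu y}$ (far ahead) into a generalized supersolution $\bar v$, with $\mu$ satisfying the same condition $\beta\mu-\mu^2-(1-a)>0$ as your $\lambda$;
\item it concludes $v\le K\bar v$ on all of $\mathbb{R}$ for $t\ge T$.
\end{itemize}

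You keep only the exponential piece. You replace the forced-wave piece by the a priori bound $u\le 1$, treating the interface $x=X(t)$ as a lateral boundary on which $\bar u=C\ge 1\ge u$. This is essentially the device the paper itself uses in the upper bound of Lemma~\ref{lem:small-beta}.

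\textbf{What your route buys.}
\begin{itemize}
\item No forced-wave existence theorem, no smoothing of $g$, and no gluing into a generalized supersolution.
\item The argument visibly uses only $\beta>2\sqrt{1-a}$; the sign of $\eta$ enters only through $T_0$.
\item It adapts at once to the case $\beta=2\sqrt{1-a}$, $\eta=0$ needed in the corollary. There one replaces $e^{-\lambda y}$ by $C(y+k)e^{-\sqrt{1-a}\,y}$ with $k>0$, which solves the linearized equation exactly in the moving frame. By contrast, the paper's exponential window $(\lambda_0,\lambda_+)$ collapses in that case.
\end{itemize}

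\textbf{What the paper's route buys.} All comparisons take place on the whole line. No boundary data and no comparison on the non-cylindrical domain $\{x>X(t)\}$ are needed. The supersolution is also stationary in the moving frame and built from forced-wave profiles, which fits the Berestycki--Fang convergence picture.

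\textbf{The one point you must supply.} Your argument needs that comparison on $\{x>X(t)\}$. It is standard here: $u$ is classical inside, continuous up to the interface, and both functions are bounded. So for the level-set bound alone, your version is the more economical one.
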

\begin{proof}
First, we set
\bea\label{y-v}
y:=x-\beta t+\eta\log(t+1),\quad 
v(t,y):=u\big(t, y+\beta t-\eta\log(t+1)\big).
\eea
A direct computation yields
\begin{equation}\label{v-eq}
v_t=v_{yy}+\Big(\beta-\frac{\eta}{t+1}\Big)v_y+v(g(y)-v),\quad t>0,\ y\in\R,
\end{equation}
where $g(y)=1$ if $y\leq 0$; $g(y)=1-a$ if $y>0$.

Now, define
\bea\label{zeta-set}
\zeta_\delta(t):=\sup\{y\in \R:\ v(t,y)>\delta\}.
\eea
Then 
\begin{equation}\label{xi-zeta}
\xi_\delta(t)=\beta t-\eta\log(t+1)+\zeta_\delta(t).
\end{equation}
It suffices to prove that there exist $T>0$ and $C>0$ such that
\begin{equation}\label{zeta-upper}
 \zeta_\delta(t)\le C,\quad \text{ for } t\ge T.
\end{equation}

To construct a generalized supersolution via a forced wave, we introduce a smooth non-increasing function 
\red{$\tilde g\in C^{\infty}(\mathbb{R})$ satisfying}
\beaa
\tilde g(y)=\begin{cases}
1,& \quad \text{ if } y\leq 0,\\
1-a,& \quad \text{ if } y\geq 1.
\end{cases}
\eeaa
Therefore, $\tilde g(y)\geq g(y)$ for all $y\in \R$.
By \cite[Theorem 1.3 (iii)]{berestycki2018forced},
we can take a (non-minimal) profile $U_{\beta}$ solving
\begin{equation}\label{eq:Ub}
\begin{cases}
\beta U'+U''+(\tilde g(y)-U)U=0,\quad y\in\R,\\
U(-\infty)=1,\quad U(+\infty)=0,\\
 U'(y)<0,\quad y\in\R,\\
U(y)\sim e^{-\lambda_0 y},\quad y\to+\infty,
\end{cases}
\end{equation}
where $\lambda_0=\frac{\beta-\sqrt{\beta^2-4(1-a)}}{2}>0$. Note that such a non-minimal front has a slower decay rate.

Fix $M_1\ge2$. We define the following stationary gluing function
\beaa
\bar v(y):=
\begin{cases}
M_1U_\beta(y),& \quad  y< 1,\\
\min\{ M_1U_\beta(y), \psi(y)\},&  1 \leq y\leq Y_1,\\
\psi(y),&\quad y> Y_1,\\
\end{cases}
\eeaa
where $Y_1>1$ will be chosen below and $\psi$ is defined by
\beaa
\psi(y):=M_2 e^{-\mu y }.
\eeaa
Here  $\mu>0$ is chosen such that $\mu\in\Big(\lambda_0,\frac{\beta+\sqrt{\beta^2-4(1-a)}}{2}\Big)$ and thus
\bea\label{nu-1}
\nu:=\beta \mu -\mu^2 -(1-a)>0.
\eea
Moreover, 
$M_2>0$ is chosen such that
\bea\label{M2}
M_2>M_1 U_{\beta}(1) e^{\mu}.
\eea

Define 
\beaa
\mathcal L[\phi]
&:= \phi_{t}-\phi_{yy}-\Big(\beta-\frac{\eta}{t+1}\Big) \phi_y-\phi(\tilde g-\phi).
\eeaa
Using \eqref{eq:Ub}, we obtain that for all $t>0$ and $y\leq Y_1$,
\begin{align*}
\mathcal L[M_1U_\beta]
&:= (M_1U_\beta)_{t}-(M_1U_\beta)_{yy}-\Big(\beta-\frac{\eta}{t+1}\Big)(M_1U_\beta)_y-M_1U_\beta(\tilde g-M_1U_\beta)\notag\\
&= M_1\Big[-U_{\beta}''-\beta U_{\beta}'-U_{\beta}(\tilde g-U_{\beta})\Big]
+\frac{\eta}{t+1}M_1 U_{\beta}'+M_1(M_1-1)U_{\beta}^2\notag\\
&= \frac{\eta}{t+1}M_1 U_{\beta}'+M_1(M_1-1)U_{\beta}^2.
\end{align*}

Set 
\begin{align*}
K_0:=\|U'_{\beta}\|_{L^{\infty}},\quad \ep_0:=\inf_{y\leq Y_1}U_{\beta}(y)=U_{\beta}(Y_1)>0.
\end{align*}
Then, we can find $t_1\gg1 $ such that
\begin{equation}\label{t_1}
\mathcal L[M_1U_\beta]\ge  -\frac{|\eta|}{t+1}M_1K_0+M_1(M_1-1)\ep_0^2  \ge0\qquad \text{for all }t\ge t_1,\ y\in(-\infty,Y_1].
\end{equation}

On the other hand, by \eqref{nu-1}
and the fact that $\tilde g(y)=1-a$ for $y\ge 1$, we see that, for all $t>0$ and $y\ge 1$,
\begin{align*}
\mathcal L[\psi]
= M_2e^{-\mu y}\Big(\nu -\frac{\eta }{t+1}\mu +M_2{e^{-\mu y}}\Big).
\end{align*}
Hence, there exists $t_2\gg1$ such that 
\begin{equation}\label{t_2}
\mathcal L[\psi]\ge0\qquad \text{for all }t\ge t_2,\ y\in[1,\infty).
\end{equation}

Furthermore, thanks to \eqref{M2}, we have
\beaa
M_1U_{\beta}(1)<M_2 e^{-\mu }=\psi(1).
\eeaa
Also, since $\mu>\lambda_0$, we may choose $Y_1>1$ large enough such that $\psi(Y_1)<M_1U_\beta(Y_1)$.
\red{Therefore, $\bar v$ is continuous at both $y=1$ and $y=Y_1$.}
Combining \eqref{t_1}--\eqref{t_2} and $\tilde{g}\ge g$, 
we see that $\bar v$ is a generalized supersolution of \eqref{v-eq} for  $t\geq T:=\max\{t_1,t_2\}$ and $y\in \R$ (cf. \cite[Remark 1.1.2]{lam2022introduction}).

To apply the comparison principle from $t=T$, we show that there exists $K>1$ such that
\bea\label{ic-T}
v(T,y)\le K\bar v(y)\quad \text{for all }y\in\R.
\eea
Indeed, since $\bar v(y)>0$ on $\R$, $\bar v(y)\ge \kappa>0$ on $(-\infty,Y_1]$ for some $\kappa>0$, and $v(T,\cdot)$ is bounded, 
$v(T,y)/\bar v(y)$ is bounded on $(-\infty,Y_1]$. On the other hand, note that for $y\ge Y_1$,
$\bar v(y)=M_2e^{-\mu y}$. Since $u_0$ is compactly supported on the right, heat kernel estimates yield that
\[
v(T,y)=o(e^{-\mu y})\quad \text{as }y\to+\infty.
\]
So $v(T,y)/\bar v(y)$ is also bounded on $[Y_1,\infty)$, and thus there exists $K>1$ such that \eqref{ic-T} holds.
Moreover, it is not hard to see that $K\bar{v}$ is still a generalized supersolution for $t\geq T$ and $y\in \R$ using $K>1$. 
\red{
Indeed, there exists a neighborhood of $y=1$ on which $K\bar v=KM_1U_\beta$, and a neighborhood of $y=Y_1$ on which $K\bar v=K\psi$. Hence,
the gluing pattern of $\bar v$ near $y=1$ and $y=Y_1$ does not change after multiplication by $K$. Moreover, 
\[
\mathcal L [K\bar v]
=K \mathcal L[\bar v]+K(K-1)\bar v^2\ge0.
\]
Therefore, $K\bar v$ is also a generalized supersolution of \eqref{v-eq} for $t\ge T$.

}

Consequently, we may use the comparison principle to assert that
\beaa
v(t,y)\le \red{K}{\bar v}(y)\quad \text{ for all } t\ge T,\ y\in\R.
\eeaa
In particular, \red{since $K\bar v(y)\to0$ as $y\to+\infty$,}
for fixed $\delta\in(0,1-a)$, we can choose $C_+\gg1$ so that $v(t,y)\leq   \red{K}{\bar v}(y)\leq \delta$ for all $t\geq \red{T}$ and $y\geq C_+$, which implies \eqref{zeta-upper}. This completes the proof.
\end{proof}

We next derive the lower bound for $\xi_{\delta}(t)$.

\begin{lemma}\label{lem:middle-beta2}
Let $\beta<2$, $\eta\geq 0$ and \red{$\delta\in(0,1-a)$} be given. 
Then there exist $T>0$ and $C>0$ such that
\begin{equation*}
\xi_\delta(t)\ge \beta t-\eta\log(t+1)-C,
\qquad t\ge T.
\end{equation*}
\end{lemma}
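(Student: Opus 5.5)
We work in the moving frame \eqref{y-v}, where $v$ solves \eqref{v-eq} with $g=1$ on $y\le 0$ and $g=1-a$ on $y>0$. By \eqref{xi-zeta}, it suffices to show that $\zeta_\delta(t)\ge -C$ for $t\ge T$. In other words, we want a fixed point $y_0$ with $v(t,y_0)\ge\delta$ for all large $t$.

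The plan is to build a compactly supported stationary subsolution located in the favorable region $y<0$. The key observation is that, for $\eta\ge0$, the drift $\beta-\eta/(t+1)$ is at most $\beta$ and tends to $\beta$. Take
\[
\underline v(y)=\epsilon\, e^{-\frac{\beta}{2}(y+y_1)}\cos\Big(\frac{\pi (y+y_1)}{2L}\Big)\quad\text{for }|y+y_1|<L,\qquad \underline v=0 \text{ otherwise},
\]
with $y_1>L$, so that the support lies in $\{y<0\}$, where $g\equiv1$. A direct computation gives
\[
-\underline v''-\beta\underline v'-\underline v(1-\underline v)=\Big(\tfrac{\beta^2}{4}+\tfrac{\pi^2}{4L^2}-1\Big)\underline v+\underline v^2 .
\]
Because $\beta<2$, we can fix $L$ large so that $\frac{\beta^2}{4}+\frac{\pi^2}{4L^2}<1$, and then the right-hand side is negative for $\epsilon$ small.

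The drift perturbation contributes an extra term $\frac{\eta}{t+1}\underline v'$. This term has no fixed sign on the support, since $\underline v'$ changes sign. However, it is $O(1/t)$ relative to $\underline v$ away from the endpoints. Near the endpoints the cosine vanishes, so we need a little more care there. The first thing I would try is a cutoff of the form $\cos(\pi z/(2L))_+$ with $L$ chosen so that the spectral gap $1-\frac{\beta^2}{4}-\frac{\pi^2}{4L^2}$ absorbs the term.

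The cleanest way to handle the endpoint issue is to replace the drift by a constant one. Use $\underline v(y)=\epsilon\,\phi(y+y_1)$, where $\phi$ is the principal Dirichlet eigenfunction on $(-L,L)$ of
\[
\phi''+b\phi'+\phi=-\mu\phi,
\]
and argue via the comparison principle for all drifts $b$ in the range $[\beta-\eta/(t+1),\beta]$. Alternatively, and more simply, change variables again to $z=y+\eta\log(t+1)$, which just undoes the log shift. In the $z$ variable the drift is exactly $\beta$, and the favorable region becomes $z\le \eta\log(t+1)$, which contains $\{z\le 0\}$ for all $t\ge0$ since $\eta\ge0$. This is where the hypothesis $\eta\ge0$ is used. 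The stationary sine-type subsolution supported in $z\in(-2L,0)$ is then a genuine subsolution for all $t$, with no error term at all. The Dirichlet boundary is handled as a generalized subsolution, by taking the maximum with $0$.

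Next we establish persistence. Since $u_0$ satisfies $\liminf_{x\to-\infty}u_0>0$, we have $v(1,\cdot)>0$ everywhere by the strong maximum principle, so $\epsilon$ can be chosen with $v(1,\cdot)\ge\underline v$. By comparison, $v\ge \underline v$ for all $t\ge1$ in the $z$ frame. To upgrade this to level $\delta$, let $\underline V(t,z)$ solve
\[
\underline V_t=\underline V_{zz}+\beta\underline V_z+\underline V\big(\mathbf 1_{\{z\le0\}}(1-a)+\mathbf 1_{\{z>0\}}(1-a)-\underline V\big),\qquad \underline V(0,\cdot)=\underline v .
\]
More simply, take the equation with reaction $\underline V(\mathbf 1_{\{z\le0\}}-\underline V)$ restricted to $z<0$, with Dirichlet condition at $z=0$. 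Since $\underline v$ is a subsolution, $\underline V$ is nondecreasing in $t$ and converges to the positive steady state on the half line $(-\infty,0)$. That steady state exists because $\beta<2$, and it tends to $1$ as $z\to-\infty$. Hence there is a $z_0<0$ at which the steady state exceeds $\delta$, since $\delta<1-a<1$. So there are $T$ and $z_0$ with $v\ge\delta$ at $z=z_0$ for $t\ge T$. Translating back through $y=z-\eta\log(t+1)$ gives $\xi_\delta(t)\ge \beta t-\eta\log(t+1)+z_0$.

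The main obstacle is this last convergence step: establishing existence, positivity and the limit $1$ at $-\infty$ for the half-line steady state with drift $\beta$ and Dirichlet condition. This is classical for $\beta<2$, since the principal eigenvalue on large intervals is positive. Monotone convergence then follows from the subsolution initial datum.
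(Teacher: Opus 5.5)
The argument rests on the second change of variables, and that step has a sign error exactly where you say $\eta\ge0$ is used. The frame that undoes the logarithmic shift is $z=y-\eta\log(t+1)=x-\beta t$, not $z=y+\eta\log(t+1)$.

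In $z=x-\beta t$ the drift is indeed exactly $\beta$. However, the favorable set $\{y\le0\}$ becomes $\{z\le-\eta\log(t+1)\}$, which recedes to $-\infty$ when $\eta>0$. Your stationary bump on $(-2L,0)$ therefore eventually lies entirely where the growth rate is $1-a$. For $\beta\ge2\sqrt{1-a}$ no nontrivial compactly supported stationary subsolution with drift $\beta$ and growth rate $1-a$ exists, because the principal Dirichlet eigenvalue $\frac{\beta^2}{4}+\frac{\pi^2}{|I|^2}-(1-a)$ of $-\partial_z^2-\beta\partial_z-(1-a)$ is positive on every interval $I$.

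With your literal choice $z=y+\eta\log(t+1)$, the favorable set is $\{z\le\eta\log(t+1)\}$ as you say. But then the drift is $\beta-\frac{2\eta}{t+1}$, so the bad term is doubled rather than removed. Also, a fixed $z_0$ then corresponds to $x=\beta t-2\eta\log(t+1)+z_0$, not $\beta t-\eta\log(t+1)+z_0$.

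In fact the conclusion you are aiming for is false in general. Having $u\ge\delta$ at a fixed point of the frame $x-\beta t$ would give $\xi_\delta(t)\ge\beta t+z_0$, which contradicts Lemma~\ref{lem:middle-beta} when $\eta>0$ and $2\sqrt{1-a}<\beta<2$.

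Your first attempt, a full stationary bump $\epsilon\phi$ in the $y$ frame, also fails, as you suspected. By \eqref{v-eq} the error term is $+\epsilon\frac{\eta}{t+1}\phi'$, which is positive on the increasing side of the bump, and $\phi'/\phi\to+\infty$ at the left endpoint. So no spectral gap can absorb it. Likewise, ``comparison over all drifts in $[\beta-\frac{\eta}{t+1},\beta]$'' only works for monotone profiles.

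The missing idea is to stay in the $y$ frame and use a subsolution that is \emph{non-increasing} in $y$. Then $\frac{\eta}{t+1}\underline v'\le0$ when $\eta\ge0$; for such a profile the reduced drift $\beta-\frac{\eta}{t+1}\le\beta$ can only help. This is the only place the hypothesis on $\eta$ enters. The paper's construction is as follows:
\begin{itemize}
\item It takes $\phi(s)=e^{-\beta s/2}\sin(s/\ell)$ with $\sigma=1-\frac{\beta^2}{4}-\frac{1}{\ell^2}>0$ and keeps only its decreasing part $s\in(y_*,\pi\ell)$.
\item It extends this part to the left by the constant $\kappa\phi(y_*)$. The junction is $C^1$ since $\phi'(y_*)=0$, and a constant below $1$ is a subsolution of the KPP term.
\item It shifts by $L_0>\pi\ell$ so that the support lies in $\{g=1\}$.
\end{itemize}
Because this profile stays positive down to $-\infty$, the initial ordering comes from $\liminf_{x\to-\infty}u_0>0$ after shifting further left.

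Your vaguer first alternative can also be made rigorous by letting the bump follow the drift: take $\underline v=\epsilon\phi_{b(t)}(y+y_1)$ with $b(t)=\beta-\frac{\eta}{t+1}$ and $\phi_b(s)=e^{-bs/2}\cos\frac{\pi s}{2L}$, starting at a large time. Then the drift mismatch disappears and $\partial_t\underline v=O(t^{-2})\underline v$. For the small-level bound this version does not even use the sign of $\eta$.

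Your upgrade from a small level to the level $\delta$ via the half-line Dirichlet steady state is sound, and more explicit than the paper's remark that $\zeta_{\delta_1}-\zeta_{\delta_2}=O(1)$. It must, however, be carried out in the non-autonomous $y$ frame:
\begin{itemize}
\item Start the Dirichlet problem on $\{y<0\}$ from the monotone profile above. The solution $W$ stays non-increasing in $y$ (compare $W(t,\cdot+h)$ with $W$).
\item Hence $W_t-W_{yy}-\beta W_y-W(1-W)=-\frac{\eta}{t+1}W_y\ge0$.
\item So $W$ dominates the autonomous drift-$\beta$ evolution, which increases to the steady state tending to $1$ at $-\infty$.
\end{itemize}
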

\begin{proof}
Let us set $u(t,x)=v(t,y)$ as in \eqref{y-v}. Then $v(t,y)$ satisfies \eqref{v-eq}. Denote $\zeta_{\delta}(t)$ as in \eqref{zeta-set}. As in the proof of Lemma~\ref{lem:middle-beta}, it suffices to prove that 
there exist $T>0$ and $C>0$ such that
\begin{equation}\label{zeta-lowerbd}
 \zeta_\delta(t)\ge  -C,\quad \text{ for } t\ge T.
\end{equation}

Define
\[
\phi(y):=e^{-\frac{\beta}{2}y}\sin\Big(\frac{y}{\ell}\Big),\quad y\in(0,\pi\ell),
\]
where $\ell>0$ to be chosen later. Let $y_*\in(0,\pi \ell)$ be the maximum point such that 
$\phi(y_*)=\max_{y\in[0,\pi\ell]}\phi(y)$.
Choose $L_0>\pi\ell$ and define, for $y\in\R$,
\begin{equation}\label{lower-v}
\underline v(y):=
\begin{cases}
\kappa \phi(y_*),&  y+L_0\leq y_*,\\
\kappa\phi(y+L_0),& y_*<y+L_0<\pi\ell,\\[2mm]
0,& \text{otherwise},
\end{cases}
\end{equation}
where $\kappa\in(0,1)$ will be fixed later.
By the choice of $L_0$, if $\underline v(y)>0$, then $y\in(-\infty,-L_0+\pi\ell)\subset(-\infty,0)$.
Therefore,
\begin{equation}\label{g=1-supp}
g(y)=1\qquad\text{ if } \underline v(y)>0.
\end{equation}
By \eqref{g=1-supp}, on the set $\{\underline v>0\}$ the equation for $v$
reduces to the Fisher--KPP part with growth rate $1$.
Let
\[
\mathcal{L}[w]
:=w_t-w_{yy}-\Big(\beta-\frac{\eta}{t+1}\Big)w_y-w(1-w),
\]
so that $v$ satisfies $\mathcal{L}[v]=0$ whenever $g(y)=1$.

A direct computation shows that $\phi$ solves
\begin{equation}\label{phi-ode}
\phi''(y)+\beta \phi'(y)+\Big(\frac{\beta^2}{4}+\frac{1}{\ell^2}\Big)\phi(y)=0,
\qquad y\in(0,\pi\ell).
\end{equation}
Since $0<\beta<2$, we may choose $\ell>0$ sufficiently large that
\begin{equation*}
\sigma:=1-\frac{\beta^2}{4}-\frac{1}{\ell^2}>0.
\end{equation*}

On the interval $\{y_*<y+L_0<\pi\ell\}$, we have $\underline v(y)=\kappa\phi(y+L_0)$.
Then we have
\begin{align*}
\mathcal{L}[\underline v]
&= -\kappa\phi''(s)-\Big(\beta-\frac{\eta}{t+1}\Big)\kappa\phi'(s)-\kappa\phi(s)\big(1-\kappa\phi(s)\big),
\qquad s:=y+L_0.
\end{align*}
Using \eqref{phi-ode}, a direct computation yields 
\begin{align}\label{eq:L-phi}
\mathcal{L}[\underline v]
&=\kappa\Big(\frac{\beta^2}{4}+\frac{1}{\ell^2}\Big)\phi(s)
+\kappa\frac{\eta}{t+1}\phi'(s)
-\kappa\phi(s)\big(1-\kappa\phi(s)\big) \notag\\
&=\kappa\phi(s)\Big(\frac{\beta^2}{4}+\frac{1}{\ell^2}-1+\kappa\phi(s)\Big)
+\kappa\frac{\eta}{t+1}\phi'(s) \notag\\
&= -\kappa\phi(s)\big(\sigma-\kappa\phi(s)\big)+\kappa\frac{\eta}{t+1}\phi'(s).
\end{align}
Recall that $\phi$ is increasing on $(0,y_*)$ and decreasing on $(y_*,\pi\ell)$.
Therefore, the assumption $\eta\ge 0$ yields
\[
\kappa\frac{\eta}{t+1}\phi'(s)\le 0,\qquad t\ge 0,\ s\in(y_*,\pi\ell).
\]
Choose $\kappa>0$ sufficiently small such that $0\leq \kappa\phi(s)\le \sigma$ for all $s\in[y_*,\pi\ell]$.
From \eqref{eq:L-phi} we see that
\beaa
\mathcal{L}[\underline v]\le -\kappa\phi(s)\big(\sigma-\kappa\phi(s)\big)\leq 0
\quad \text{for } t\ge 0,\  y_*<y+L_0<\pi\ell.
\eeaa

On $\{y+L_0\le y_*\}$, $\underline v\equiv \kappa\phi(y_*)$ is constant. Then 
\[
\mathcal{L}[\underline v]=-\kappa\phi(y_*)\big(1-\kappa\phi(y_*)\big)\le 0,
\]
since we may choose $\kappa$ smaller such that $\kappa\phi(y_*)\in(0,1)$.

Note that $\phi'(y_*)=0$, so $\underline v$ is $C^1$ there. At the right endpoint $y+L_0=\pi\ell$, we have $\underline v=0$ and the left derivative
$\partial_y\underline v<0$; while the right derivative is $0$. Therefore,  $\underline v$
is a generalized subsolution.

Moreover, since $\liminf_{x\to-\infty}u_0(x)>0$,
if necessary we can take $\kappa>0$ smaller and $L_0$ larger so that
\begin{equation}\label{eq:init-compare}
\underline v(y)\le v(0,y)\quad\text{for all }y\in\R.
\end{equation}
Therefore, we can apply the comparison principle to conclude that 
\begin{equation}\label{eq:compare}
v(t,y)\ge \underline v(y)\quad\text{for all }t\ge 0,\ y\in\R.
\end{equation}
In particular, for any $\delta\in(0, \kappa \phi(y_*))$, there exists $C>0$
\[
\zeta_\delta(t)\ge -C
\qquad\text{for all }t\ge 0.
\]
Therefore \eqref{zeta-lowerbd} holds for any small $\delta>0$. In fact, \eqref{zeta-lowerbd} still holds for any \red{$\delta\in(0,1-a)$} (with $T=T(\delta)$) due to the fact that 
$\zeta_{\delta_1}(t)-\zeta_{\delta_2}(t)=O(1)$ for any $\delta_1,\delta_2\in(0,1-a)$.
This completes the proof.
\end{proof}

We are ready to prove Theorem~\ref{thm:less-favorable}.

\begin{proof}[Proof of Theorem~\ref{thm:less-favorable}]
\red{Part (i) follows from Lemma~\ref{lem:small-beta}.}
Parts (ii) and  (iii)
of Theorem~\ref{thm:less-favorable} \blue{follow from}
Lemma~\ref{lem:middle-beta} and Lemma~\ref{lem:middle-beta2}.
\end{proof}

\begin{problem}
A key assumption in our result is that the environmental function $r$ is piecewise constant across the jump discontinuity $x=X(t)$. It is interesting to inquire the level set asymptotics of 
\eqref{e.kpp.gen} 
when 
(i) $r= r(x-c_1t)$ is monotone in $x$; and where (ii) the limits $r(\pm\infty)$ exist and are positive, e.g., the problem when $r(t,x) = 1-a \psi(x-c_1t)$ was originally considered in \cite{holzer2014accelerated}, where $\psi(\xi)$ is a traveling wave profile which converges exponentially to $1$ as $\xi \to -\infty$ and to $0$ as $\xi \to +\infty$.
\end{problem}



%

\medskip
Received xxxx 20xx; revised xxxx 20xx; early access xxxx 20xx.
\medskip

\end{document}